\newtheorem{set2}{Satz}[section]
\newtheorem{theorem}[set2]{Theorem}
\newtheorem{corollary}[set2]{Corollary}
\newtheorem{definition}[set2]{Definition}
\newtheorem{lemma}[set2]{Lemma}
\newtheorem{notation[set2]}{Notation}
\newtheorem{remark}[set2]{Remark}
\newenvironment{hypotheses}[1][Assumptions]{\textbf{#1}}{}
\newcommand{\ep}{\hfill{$\square$}}
\newenvironment{proof}[1][Proof]{\textbf{#1.} }{\
\\}
\def\XXint#1#2#3{{\setbox0=\hbox{$#1{#2#3}{\int}$}
\vcenter{\hbox{$#2#3$}}\kern-.5\wd0}}
\newcommand{\1}{\mathbf{1}}
\newcommand{\bl}{\left(}
\newcommand{\br}{\right)}
\newcommand{\e}{\varepsilon}
\newcommand{\di}{\,\mathrm{div}}
\newcommand{\R}{\mathbb{R}}
\newcommand{\N}{\mathbb{N}}
\newcommand{\C}{\mathcal}
\newcommand{\OL}{\overline}
\newcommand{\UL}{\underline}
\newcommand{\dx}{\,\mathrm dx}
\newcommand{\dy}{\,\mathrm dy}
\newcommand{\dt}{\,\mathrm dt}
\newcommand{\ds}{\,\mathrm ds}
\newcommand{\diota}{\,\mathrm d\iota}
\newcommand{\dxt}{\,\mathrm dx\,\mathrm dt}
\newcommand{\dxs}{\,\mathrm dx\,\mathrm ds}
\newcommand{\esssup}{\mathop\mathrm{ess\,sup}}
\newcommand{\CC}{\textbf{C}}
\newcommand{\DD}{\textbf{D}}
\definecolor{purple}{rgb}{0.6,0,1}
\date{\today}
\title{Damage processes in thermoviscoelastic materials with damage-dependent thermal expansion coefficients}
\author{Christian Heinemann\footnote{Weierstrass Institute for Applied
Analysis and Stochastics, Mohrenstr.~39, D-10117 Berlin,
Germany. E-mail: \textit{christian.heinemann@wias-berlin.de}\newline
Most of the work has been done during a visit of C.H. to the Dipartimento di Matematica of the Universit\`a deli Studi di Pavia in January and February 2014.
}, Elisabetta Rocca\footnote{Weierstrass Institute for Applied
Analysis and Stochastics, Mohrenstr.~39, D-10117 Berlin,
Germany. E-mail: \textit{elisabetta.rocca@wias-berlin.de} and Dipartimento di Matematica ``F. Enriques'',
Universit\`{a} degli Studi di Milano, Milano I-20133, Italy.
E-mail: \textit{elisabetta.rocca@unimi.it}.  The author
is supported by the FP7-IDEAS-ERC-StG
Grant \#256872 (EntroPhase) { and by GNAMPA (Gruppo Nazionale per l'Analisi Matematica, la Probabilit\`a e le loro Applicazioni) of INdAM (Istituto Nazionale di Alta Matematica).}}}
\begin{document}
%\selectlanguage{english}
%\noindent
\maketitle

\begin{abstract}
In this paper we prove existence of global in time weak solutions for a highly nonlinear 
PDE system arising in the context of damage phenomena in thermoviscoelastic materials.
The main novelty of the present contribution  with respect to the ones already present in the literature 
consists in the possibility of taking into account a damage-dependent thermal expansion coefficient. 
This term implies the presence of nonlinear coupling terms in the PDE system, which make the analysis more challenging.  
\end{abstract}

\noindent {\bf Key words:} Damage
phenomena,  thermoviscoelastic materials, global existence of weak
solutions, nonlinear boundary value problems.\vspace{4mm}

\noindent {\bf AMS (MOS) subject clas\-si\-fi\-ca\-tion:}
35D30,  34B14, 74A45.

\section{Introduction}
  
	We consider the PDE system, in $\Omega\times (0,T)$, where $\Omega\subseteq \R^d$ (with $d\in\{1,2,3\}$) is a bounded and sufficiently regular domain and $T$ denotes a final time,
	\begin{subequations}
	\label{eqn:pde}
	\begin{align}
%		&\textit{Heat equation:}\notag\\
		\label{eqn:pde1}
		&\qquad\textsf{c}(\theta)\theta_t-\di(\textsf{K}(\theta)\nabla\theta)+\rho(\chi)\theta\di(u_t)+\theta\chi_t+\rho'(\chi)\theta\di(u)\chi_t=g,\\
%		&\textit{Balance of forces:}\notag\\
		\label{eqn:pde2}
		&\qquad u_{tt}-\di(b(\chi)\CC\e(u))-\di(a(\chi)\DD\e(u_t))+\di(\rho(\chi)\theta\1)=\ell,\\
%		&\textit{Evolution of the damage process:}\notag\\
		\label{eqn:pde3}
		&\qquad\chi_t+\xi+\varphi-\Delta_p\chi+\gamma(\chi)+\frac{b'(\chi)}{2}\CC\e(u):\e(u)-\theta-\rho'(\chi)\theta\di(u)=0
	\end{align}
	\end{subequations}
	with subgradients $\xi\in\partial I_{[0,\infty)}(\chi)$ and $\varphi\in\partial I_{(-\infty,0]}(\chi_t)$.
	The initial-boundary conditions are
	\begin{subequations}
	\begin{align}
		&\theta(0)=\theta^0,
		&&u(0)=u^0,
		&&u_t(0)=v^0,
		&&\chi(0)=\chi^0
		&&\text{in }\Omega,\\
		&\textsf{K}(\theta)\nabla\theta\cdot\nu=0,
		&&u=0,
		&&\nabla\chi\cdot\nu=0
		&&&&\text{on }\partial\Omega\times(0,T).
	\end{align}
	\end{subequations}
	
The state variables and unknowns of the problem are the absolute temperature $\theta$, whose evolution is ruled by the internal energy balance \eqref{eqn:pde1},
the vector of small displacements $u$, satisfying the momentum balance \eqref{eqn:pde2}, and the damage parameter $\chi$, representing the local proportion of
damage: $\chi=1$ means that the material is completely safe, while $\chi=0$ means it is completely damaged. Indeed, the two contraints $\chi\in [0,+\infty)$
and $\chi_t\leq 0$ together with the assumption  $\chi^0\in [0,1]$ imply that $\chi\in [0,1]$ during the evolution, as it results from its physical meaning.

It has to be mentioned that we use the \emph{small perturbation assumption} (cf. \cite{Germain73}) which neglects the quadratic contributions 
\begin{equation}
\label{diss}
a(\chi)\DD\e(u_t):\e(u_t)+|\chi_t|^2
\end{equation}
on the right-hand side
of \eqref{eqn:pde1} (see below). In consequence the system \eqref{eqn:pde1}-\eqref{eqn:pde3} is not thermodynamically consistent.
For an analysis of the full system with constant heat expansion coefficients we refer to \cite{RR14}.

The main novelty of this contribution consists in the possibility of taking into account the dependence of the thermal expansion coefficient $\rho\in C^1([0,1])$
on the damage variable $\chi$.
The progression of damage is accompanied with an increase and proliferation of micro-cavities and micro-cracks in the considered material
(as pointed out in engineering literature on damage; see, e.g., \cite[pp. 2-3]{LD05}).
This loss of structural integrity may also reduce the elastic response on temperature changes modeled by the heat expansion term in \eqref{eqn:pde2}.
But a dependence of $\rho$ on the damage also provokes the presence of two new nonlinear terms in \eqref{eqn:pde1} coupling them nonlinearly with both the momentum balance \eqref{eqn:pde2} and the damage evolution \eqref{eqn:pde3}.
Especially the coupling term $\rho'(\chi)\theta\di(u)\chi_t$ in \eqref{eqn:pde1} complicates the analysis and requires elaborate estimation techniques to gain the desired a priori estimates.
Moreover, a dependence on $\chi$ in the $u$-equation \eqref{eqn:pde2} appears explicitly as well as a further dependence on $u$ and $\theta$ in the $\chi$-equation \eqref{eqn:pde3}. 
The other two coeffiecients $\textsf{c}$ and $\textsf{K}$ appearing in equation \eqref{eqn:pde1} represent respectively the heat capacity and the heat conductivity of the system 
and will have to satisfy proper growth conditions (cf. Remark~\ref{rem:ck}), while the function $g$ denotes a given heat source. 

In the momentum balance \eqref{eqn:pde2} $\varepsilon(u):=(u_{i,j}+u_{j,i})/2$ denotes the linearized symmetric strain tensor, while the functions $b, \, a\in C^1([0,1])$ demarcate the damage dependence of the
elasticity and viscosity  modula, respectively. In the present contribution we will restrict to the case of incomplete damage, i.e. to the case where $a(x), b(x)\geq \eta>0$ (cf. \cite{RR12} for the
complete damage model in case $\rho=0$). The function $\ell$ on the right-hand side in \eqref{eqn:pde2} represents a given external force.
In order to avoid overburden the presentation, homogeneous Dirichlet data are assumed for $u$ (see Remark \ref{remark:homDirichlet} for further comment).

Finally, in the inclusion \eqref{eqn:pde3},  the selections $\xi$ and $\varphi$ of the two maximal monotone operators, acting on $\chi$ and $\chi_t$ respectively, are introduced in order to give the constraints on the 
damage parameter ($\chi\in [0,1]$ as soon as $\chi^0\in [0,1]$) and on the irreversibility of the damage process ($\chi_t\leq 0$). The $p$-Laplacian operator $\Delta_p\chi:=\mathrm{div}(|\nabla\chi|^{p-2}\nabla\chi)$ accounts for the nonlocal interactions between 
particles, but the restriction of the exponent $p>d$ is mainly due to analytical reasons. It is introduced, in particular, in order to obtain sufficient regularity on $\chi$ needed in \eqref{eqn:pde2} to obtain an 
enhanced estimate on $\epsilon(u)$, which appears at power $2$ in \eqref{eqn:pde3} and so it has to be estimated in a better space than $L^2(\Omega\times(0,T))$.
In addition to that, the enhanced regularity of $\chi$ enables the usage of approximation techniques in order to treat the doubly nonlinear inclusion \eqref{eqn:pde3} in a weak formulation.
Moreover, the function $\gamma$ is assumed to be smooth but possibly non monotone. 
We would like to emphasize that the weak formulation for the $\chi$-equation used in the sequel was introduced in \cite{WIAS1520}
(dealing with Cahn-Hilliard systems coupled with elasticity and damage processes; see also \cite{WIAS1722, WIAS1759,WIAS1569}).

In the remaining part of the Introduction we will briefly explain the derivation of \eqref{eqn:pde} referring to \cite{RR12} for more details.  
	
	The System \eqref{eqn:pde1}-\eqref{eqn:pde3} can be derived from fundamental balance laws in continuum mechanics
	supplemented with constitutive relations used to describe thermoviscoelastic solids.
	In this approach, we make use of the free energy $\C F$ given by \cite[Sec.\ 4.5, pp.\ 42-43]{Fr12}
	\begin{align}
	\label{eqn:freeEnergy}
		\C F(\theta,\e(u),\chi,\nabla\chi)
		={}&\int_\Omega\bl\frac 1p|\nabla\chi|^p+\widehat\gamma(\chi)+\frac{b(\chi)}{2}\CC\e(u):\e(u)\br\dx\\
		&+\int_\Omega\Big( f(\theta)-\theta\chi-\rho(\chi)\theta\di(u)+I_{[0,\infty)}(\chi)\Big)\dx
	\end{align}
	and the dissipation potential defined by
	\begin{align*}
		\C P_{\theta,\chi}(\nabla\theta,\chi_t,\e(u_t))=\int_\Omega\bigg(\frac{\textsf{K}(\theta)}{2}|\nabla\theta|^2
			+\frac 12|\chi_t|^2+\frac{a(\chi)}{2}\DD \e(u_t):\e(u_t)+I_{(-\infty,0]}(\chi_t)\bigg)\dx.
	\end{align*}
	For notational convenience, we write $\C P$ instead of $\C P_{\theta,\chi}$. Let us point out that the gradient of  $\chi$
accounts for the influence of damage at a material point,
undamaged in its neighborhood.
In this sense the term $\frac{1}{p}|\nabla\chi|^p$ models
nonlocality of the damage process and effects like 
possible hardening or softening (cf.\ also \cite{BMR09} for further comments on this topic).
Gradient regularizations of $p$-Laplacian type
are often adopted  in the mathematical papers on damage  (see for example \cite{bobo2,BS04,WIAS1520, Mielke06}),
and in the
 modeling literature as well
  (cf., e.g., \cite{FN96,Fre02}).
	
	Equation \eqref{eqn:pde1} is obtained from the internal energy balance which reads as
	$$
			e_t+\di q=g+\sigma:\e(u_t)+B\chi_t+H\cdot\nabla\chi_t,
	$$
	where $e$ denotes the internal energy, $q$ the heat flux,
	$g$ the heat source, $\sigma$ the stress tensor, $\e(u_t)$ the linearized strain rate tensor, $H$ and $B$ the so-called microscopic forces (cf. \cite{Fr12}).
	The quantities above are given by the following constitutive relations
	\begin{align*}
		&\sigma=\frac{\partial\C F}{\partial\e(u)}+\frac{\partial\C P}{\partial\e(u_t)},
		&&B\in\frac{\partial\C F}{\partial\chi}+\frac{\partial\C P}{\partial\chi_t},
		&&H=\frac{\partial \C F}{\partial\nabla\chi}+\frac{\partial\C P}{\partial\nabla\chi_t},\\
		&e=\C F-\theta\frac{\partial\C F}{\partial\theta},\qquad\qquad
		&&q=-\frac{\partial \C P}{\partial\nabla\theta}.
	\end{align*}
	Note that, for analytical reasons, 
	we have neglected the quadratic contributions\linebreak $a(\chi)\e(u_t):\CC\e(u_t)+|\chi_t|^2$ on the right-hand side of \eqref{eqn:pde1}, using the already mentioned
	small perturbation assumption.
In fact, to our knowledge only few results are available on
diffuse interface  models in thermoviscoelasticity (i.e.\ also accounting for the evolution of the
displacement variables, besides the temperature and the
order parameter): among others, we quote \cite{RR08, RR12} where the small perturbation assumption is adopted in case of constant $\rho$ and  \cite{Roubicek10} where
a PDE system coupling the momentum balance equation, the temperature equation (with quadratic nonlinearities)  and a \emph{rate-independent} flow rule for an internal dissipative variable $\chi$ (such as the damage parameter) has been
analyzed. Finally, a temperature-dependent, \emph{full} model
for (rate-dependent) damage has been addressed
in \cite{bobo2} as well, but only with local-in-time existence results.

 Moreover, we make use of the assumption
	\begin{align*}
	%\label{eqn:heatCapacity}
		\textsf{c}(\theta)= -\theta f''(\theta),
	\end{align*}
	where $f$ is a concave function.
	Eventually, the equation for the balance of forces \eqref{eqn:pde2} can be written as
	$$
		u_{tt}-\di\sigma=\ell
	$$
	with external volume forces $\ell$ and the evolution of the damage processes as described in equation \eqref{eqn:pde3} is derived from a
	balance equation of the microscopic forces, i.e.
	$$
		B-\di H=0.
	$$

	To handle non-constant heat capacities $\mathsf c$, we perform an enthalpy transformation of system \eqref{eqn:pde1}-\eqref{eqn:pde3}.
	To this end, we introduce the primitive $\widehat{\textsf{c}}$ of $\textsf{c}$ as
	\begin{align}
	\label{eqn:cHat}
		\widehat{\textsf{c}}(r):=\int_0^r \textsf{c}(\theta)\,\mathrm d\theta.
	\end{align}
	The enthalpy transformation of system \eqref{eqn:pde} yields
	\begin{subequations}
	\label{eqn:pdeTransf}
	\begin{align}
	\label{eqn:system1}
		&w_t-\di(K(w)\nabla w)+\Theta(w)\chi_t+\rho(\chi)\Theta(w)\di(u_t)+\rho'(\chi)\Theta(w)\di(u)\chi_t=g,\\
	\label{eqn:system2}
		&u_{tt}-\di(b(\chi)\CC\e(u))-\di(a(\chi)\DD\e(u_t))+\di(\rho(\chi)\Theta(w)\1)=\ell,\\
	\label{eqn:system3}
		&\chi_t+\xi+\varphi-\Delta_p\chi+\gamma(\chi)+\frac{b'(\chi)}{2}\CC\e(u):\e(u)-\Theta(w)-\rho'(\chi)\Theta(w)\di(u)=0.
	\end{align}
	\end{subequations}
	with $\xi\in\partial I_{[0,\infty)}(\chi)$ and $\varphi\in\partial I_{(-\infty,0]}(\chi_t)$ and the transformed quantities
	\begin{align}
	\label{eqn:wThetaKDef}
		&w:=\widehat{\textsf{c}}(\theta),
		&&\Theta(w):= \widehat{\textsf{c}}^{-1}(w),
		&&K(w):=\frac{\mathsf{K}(\Theta(w))}{\textsf{c}(\Theta(w))}.
	\end{align}
  
As already mentioned in the Introduction, the main difficulty here, with respect to the previous works in the literature, consists in the presence of the nonlinearities due to the fact that the temperature expansion term 
depends on $\chi$. Indeed, following \cite{Roubicek10, RR12}, here we will combine the conditions on $\mathsf{K}$ with conditions on
the heat capacity  coefficient $\mathsf{c}$ to handle the nonlinearities $\rho(\chi)\theta\di(u_t)$, $\theta\chi_t$, $\rho'(\chi)\theta\di(u)\chi_t$  in \eqref{eqn:pde1} by means of a so-called Boccardo-Gallou\"et type 
estimate on $\theta$. The reader
 may consult \cite{ZR66} for various examples in which a superquadratic growth in $\theta$ for the heat
conductivity $\mathsf{K}$ is imposed.

As for the triply nonlinear inclusion \eqref{eqn:pde3}, we will use a notion of solution derived in \cite{WIAS1520}.
The authors have devised a weak formulation consisting
of a \emph{one-sided} variational inequality (i.e.\ with test functions
having a fixed sign), and of an \emph{energy inequality}, see Definition~\ref{def:weakSolution} later.
Finally, let us notice that uniqueness of solutions remains an open problem even in the isothermal case. The main problem is, in general, the doubly nonlinear character of \eqref{eqn:pde3} (cf. also \cite{CV90} for examples 
of non-uniqueness in general doubly nonlinear equations).  

The paper is organized as follows.
In Section \ref{section:assumptions}, we list all assumptions which are used throughout this paper and introduce some notation.
Subsequently, a suitable notion of weak solutions for system \eqref{eqn:pde} as well as the main result, existence of weak solutions (see Theorem \ref{theorem:existence}),
are stated in Section \ref{section:notion}.
In the main part, the proof of the existence theorem is firstly performed for a truncated system in Section \ref{section:existence1} and finally for
the limit system in Section \ref{section:existence2}.
	
\section{Notation and assumptions}
\label{section:assumptions}
	Let $d\in\{1,2,3\}$ denote the space dimension.
	For the analysis of the transformed system \eqref{eqn:system1}-\eqref{eqn:system3}, the central hypotheses are stated below.\vspace*{0.5em}\\
	\begin{hypotheses}
	\begin{enumerate}
		\item[(A1)]
			$\Omega\subseteq\R^d$ is a bounded $C^2$-domain.
		\item[(A2)]
			The function $\Theta:\R\to\R$ is assumed to be Lipschitz continuous with $\Theta(w)\geq 0$ and $\Theta'(w)\geq 0$
			for a.e. $w\geq 0$ and should satisfy the growth condition
			\begin{align*}
%				c_1(w^{1/{\sigma_1}}-1)\leq
				\Theta(w)\leq c_0(w^{1/{\sigma}}+1)
			\end{align*}
			for all $w\geq 0$ and for constants $\sigma\geq 3$ and $c_0>0$.
			Moreover, we assume $\Theta(w)=0$ for all $w\leq 0$.
		\item[(A3)]
			The heat conductivity function $K:\R\to\R$ is assumed to be continuous and should satisfy the estimate
			\begin{align*}
				c_1(w^{2q}+1)\leq K(w)\leq c_2(w^{2q_0}+1)
			\end{align*}
			for all $w\geq 0$ and for constants $c_1,c_2,q,q_0>0$ satisfying
			$$
				1/\sigma\leq 2q-1,\qquad q\leq q_0<q+\frac 12.
			$$
		\item[(A4)]
			The damage-dependent potential function $\widehat \gamma$ is assumed to satisfy $\widehat \gamma\in C^1([0,1])$.
%		\item[(A5)]
%			The functions $f\in C^2(\R)$ (related to the heat capacity function $\textsl{c}$ by \eqref{eqn:heatCapacity}) is supposed to satisfy $f''(x)<0$ for all $x\in\R$.
		\item[(A5)]
			The coefficient functions $a\in C^1([0,1])$ and $b\in C^2([0,1])$ should satisfy the estimate $a(x),b(x)\geq\eta$ for all $x\in[0,1]$
			and a constant $\eta>0$.
		\item[(A6)]
			The 4th order stiffness tensor $\CC\in\C L(\R_\mathrm{sym}^{d\times d};\R_\mathrm{sym}^{d\times d})$
			is assumed to be
				symmetric and positive definite, i.e.
				\begin{align}
				\label{eqn:anisotropy}
					&\CC_{ijlk}=\CC_{jilk}=\CC_{lkij},&&e:\CC e\geq c_3|e|^2\text{ for all }e\in \R_\mathrm{sym}^{d\times d}
				\end{align}
				with constant $c_3>0$.
				The viscosity tensor is given by
				\begin{align}
				\label{eqn:viscousTensor}
					\DD=\mu\CC,
				\end{align}
				where $\mu>0$ is a constant.
%				of the form
%				\begin{align*}
%					&\CC\e=\DD\e=\lambda\tr(\e)\mathbf 1+2\mu\e,
%				\end{align*}
%				where $\mathbf 1$ denotes the identity tensor and $\lambda,\mu$ are the so-called Lam\'e parameters.
%			The 4th order stiffness tensors $\CC,\DD\in\C L(\R_\mathrm{sym}^{d\times d};\R_\mathrm{sym}^{d\times d})$
%			are assumed to be 
		\item[(A7)]
			The thermal expansion coefficient $\rho$ depending on $\chi$ is assumed to fulfill $\rho\in C^1([0,1])$.
		\item[(A8)]
			The constant $p$ (occurring in the $p$-Laplacian in \eqref{eqn:pde3} and in \eqref{eqn:system3}, respectively)
			should satisfy $p>d$.
	\end{enumerate}
	\end{hypotheses}
	\begin{remark}
			We would like to remark that condition \eqref{eqn:viscousTensor} is needed in order to perform the $W^{2,s}$-regularity argument
			in the discrete scheme (see Lemma \ref{lemma:discrSys}).
	\end{remark}
	\begin{remark}\label{rem:ck}
		The Assumptions (A2) and (A3) can also be formulated in terms of
		the original heat conductivity function $\mathsf{K}$ and the
		heat capacity function $\mathsf{c}$ as follows.
		\begin{enumerate}
			\item[(A2')]
				The function $\mathsf{c}$ should be continuous and should satisfy the estimate
				$$
					\widetilde c_0(\theta^{\sigma-1}+1)\leq\mathsf{c}(\theta)
				$$
				for all $\theta\geq 0$ and for constants $\sigma\geq 3$ and $\widetilde c_0>0$.
			\item[(A3')]
			The function $\mathsf{K}$ is assumed to be continuous and should satisfy the estimate
			\begin{align*}
				c_1(\widehat{\mathsf{c}}(\theta)^{2q}+1)\mathsf{c}(\theta)\leq \mathsf{K}(\theta)\leq c_2(\widehat{\mathsf{c}}(\theta)^{2q_0}+1)\mathsf{c}(\theta)
			\end{align*}
			for all $\theta\geq 0$ and for constants $c_1,c_2,q,q_0>0$ satisfying $1/\sigma\leq 2q-1$ and\linebreak $q\leq q_0<q+\frac 12$
			(see \eqref{eqn:cHat} for the definition of $\widehat{\mathsf{c}}$).
		\end{enumerate}
	\end{remark}
	\begin{remark}
		The growth assumptions (A2) and (A3) are necessary to obtain the a priori estimates (uniform with respect to the truncation parameter $M$)
		of the truncated system where $K$ and $\Theta$ are substituted by $K_M$ and $\Theta_M$ with $M\in\N$ as in \eqref{eqn:KTetha}.
		The whole calculations are carried out in the proof of Lemma \ref{lemma:aPriori}.
		At this point let us motivate in a more formal way where the Assumptions (A2) and (A3) originate:
		
		Testing \eqref{eqn:system1} with $-(w+1)^{-\alpha}$ (as done in the proof of Lemma \ref{lemma:aPriori}) yields after integration by parts
		\begin{align*}
			&\int_{\Omega_T}\Bigg(-w_t(w+1)^{-\alpha}-K_M(w)\nabla w\cdot\nabla\big((w+1)^{-\alpha}\big)\\
			&\qquad\quad+\big(\chi_t+\rho(\chi)\di\bl u_t\br+\rho'(\chi)\di(u)\chi_t\big)\frac{-\Theta_M(w)}{(w+1)^\alpha}\Bigg)\dxt=0.
		\end{align*}
		The second term may be rewritten as
		\begin{align*}
			-K_M(w)\nabla w\cdot\nabla\big((w+1)^{-\alpha}\big)
			=\frac{K_M(w)}{(w+1)^{\alpha+1}}|\nabla w|^2.
		\end{align*}
		On the one hand, to ensure the requested estimate
		$$
			\frac{K_M(w)}{(w+1)^{\alpha+1}}\geq c
		$$
		for some constant $c>0$ independent of $w$ and $M$, we assume $K(w)\geq c_1(w^{2q}+1)$ and $\alpha\leq 2q-1$.
		On the other hand, to guarantee boundedness of
		$$
			\frac{|\Theta_M(w)|}{(w+1)^\alpha}\leq C,
		$$
		we impose $\Theta(w)\leq c_0(w^{1/\sigma}+1)$ and $\alpha\geq\frac1\sigma$.
		Together, we obtain $\frac1\sigma\leq \alpha\leq 2q-1$.
		
		The choice of $\sigma$ can be determined by following the proof of the fifth a priori estimate in Lemma \ref{lemma:aPriori}.
		To this end, we test \eqref{eqn:system1} with $w$ and obtain
		\begin{align*}
			&\frac 12\int_\Omega|w(t)|^2\dx-\frac 12\int_\Omega|w(0)|^2\dx
			+\int_{\Omega_t} K_M(w)|\nabla w|^2\dxt\\
			&\quad+\int_{\Omega_t}\bl\chi_t+\rho(\chi)\di\bl u_t\br
				+\rho'(\chi)\di(u)\chi_t\br\Theta_M(w)w\dxt=0.
		\end{align*}
		From other a priori estimates it is known that the term
		$\chi_t+\rho(\chi)\di\bl u_t\br+\rho'(\chi)\di(u)\chi_t$ is bounded in $L^2(L^{3/2})$.
		Boundedness of $\Theta_M(w)w$ in $L^2(L^3)$ can be gained by the following splitting argument (this technique was already used in \cite{RR12})
		\begin{align*}
			\|\Theta_M(w)w\|_{L^2(0,t;L^3)}
			\leq{}& C\bl\int_0^t\|\Theta_M(w(s))w(s)\|_{L^3(\{w(s)\leq M\})}^2\ds\br^{1/2}\\
			&+C\bl\int_0^t\|\Theta_M(w(s))w(s)\|_{L^3(\{w(s)> M\})}^2\ds\br^{1/2}.
		\end{align*}
		It turns out that the first term on the right-hand side is bounded by the fourth a priori estimate.
		The second term can be estimated as follows (for more details we refer to the proof of the fifth a priori estimate in Lemma \ref{lemma:aPriori}):
		\begin{align*}
			\int_0^t\|\Theta_M(w(s))w(s)\|_{L^3(\{w(s)> M\})}^2\ds
			&=\esssup_{t\in(0,T)}\|\Theta_M(w(t))\|_{L^6(\{w(t)>M\})}^2\|w\|_{L^2(0,t;H^1(\Omega))}^2\\
			&\leq C\frac{M^{2/\sigma}+1}{M^{2/3}}\|w\|_{L^2(0,t;H^1(\Omega))}^2.
		\end{align*}
		To obtain an a priori bound uniformly in $M\uparrow\infty$, we impose $\sigma\geq 3$.
		
		The condition $q_0<q+\frac 12$ is necessary to ensure that
		the constant value $r=\frac{2q+2}{2q_0+1}$ is greater $1$.
		Because by a comparison argument performed in the sixth a priori estimate in Lemma \ref{lemma:aPriori}
		we can see that $w_t$ is bounded in the $W^{1,r}(0,T;W_\nu^{2,s}(\Omega)^*)$-space.
	\end{remark}
%	\begin{remark}
%		\textcolor{purple}{
%			We will use condition \eqref{eqn:viscousTensor} to obtain spatial $H^2$-regularity of $u$
%			in the time-discrete setting (see proof of Lemma \ref{lemma:discrSys}).
%		}
%	\end{remark}
	\begin{remark}
		As already indicated in the introduction, Assumption (A8) is used for mathematical reasons and plays a central role in handling the differential inclusion \eqref{eqn:system3}.
		More precisely, the compact embedding $W^{1,p}(\Omega)\hookrightarrow C^{0,\alpha}(\OL\Omega)$ with H\"older exponent $0<\alpha<1-\frac dp$
		is employed to apply approximation techniques introduced in \cite{WIAS1520} (see the item ``One-sided variational inequality for the damage process''
		in Subsection \ref{section:tauToZero})
		and enables us to pass from a time-discrete version of \eqref{eqn:system3} to a time-continuous limit.
	
		Secondly, Assumption (A8) is also utilized in the third a priori estimate in the proof of Lemma \ref{lemma:aPrioriDiscr}
		and Lemma \ref{lemma:aPriori} which are based on arguments in \cite[Proposition 3.10]{RR12}.
	\end{remark}

	For later use, we define the following subspaces (with $p,s\geq 1$):
	\begin{align*}
		&W_\nu^{2,s}(\Omega):=\big\{\zeta\in W^{2,s}(\Omega)\,|\,\nabla\zeta\cdot \nu=0\text{ on }\partial\Omega\big\},\\
		&W_+^{1,p}(\Omega):=\big\{\zeta\in W^{1,p}(\Omega)\,|\,\zeta\geq 0\text{ a.e. in }\Omega\big\},\\
		&W_-^{1,p}(\Omega):=\big\{\zeta\in W^{1,p}(\Omega)\,|\,\zeta\leq 0\text{ a.e. in }\Omega\big\}
	\end{align*}
	as well as the space-time cylinders $\Omega_T:=\Omega\times(0,T)$ and $\Omega_t:=\Omega\times(0,t)$.
	The primitive of an integrable function $f:\R\to\R$ vanishing at $0$ is denoted by $\widehat f$.

\section{Notion of weak solutions and main result}
\label{section:notion}
	
	We assume for the external heat source $g\in L^2(0,T;L^2(\Omega))$ and the external volume force $\ell\in L^2(0,T;L^2(\Omega;\R^d))$.
	We introduce the following notion of weak solutions (a justification of the weak notion for the $\chi$-equation is given in the subsequent Lemma \ref{lemma:justification}).

	\begin{definition}
	\label{def:weakSolution}
		A weak solution  corresponding to the initial data $(u^0,v^0,w^0,\chi^0)$ is a $4$-tuple $(u,w,\chi,\xi)$ such that
		\begin{align*}
			&u\in H^1(0,T;H_0^2(\Omega;\R^d))\cap W^{1,\infty}(0,T;H_0^1(\Omega;\R^d))\cap H^{2}(0,T;L^2(\Omega;\R^d))\\
			&\quad \text{ with }u(0)=u^0\text{ a.e. in }\Omega,\;\partial_t u(0)=v^0\text{ a.e. in }\Omega,\\
			&w\in L^2(0,T;H^1(\Omega))\cap L^{2(q+1)}(0,T;L^{6(q+1)}(\Omega))\cap L^\infty(0,T;L^2(\Omega))\\
			&\qquad\cap W^{1,r}(0,T;W_\nu^{2,s}(\Omega)^*)\\
			&\quad \text{ with }w(0)=w^0\text{ a.e. in }\Omega,\;w\geq 0\text{ a.e. in }\Omega_T,\\
			&\chi\in L^\infty(0,T;W^{1,p}(\Omega))\cap H^1(0,T;L^2(\Omega))\\
			&\quad \text{ with }\chi(0)=\chi^0\text{ a.e. in }\Omega,\;\chi\geq 0\text{ a.e. in }\Omega_T,\;\partial_t\chi\leq 0\text{ a.e. in }\Omega_T,\\
			&\xi\in L^1(0,T;L^1(\Omega))
		\end{align*}
		with $r:=(2q+2)/(2q_0+1)$ and $s:=(6q+6)/(6q-2q_0+5)$,
		and for a.e. $t\in(0,T)$:
		\begin{itemize}
			\item[(i)]
				heat equation: for all $\zeta\in W_\nu^{2,s}(\Omega)$
				\begin{align}
					&\langle\partial_t w,\zeta\rangle_{H^1}+\int_\Omega\bl-\widehat K(w)\Delta\zeta
						+\Theta(w)\partial_t \chi\zeta\br\dx\notag\\
					&\quad+\int_\Omega\big(\rho(\chi)\Theta(w)\di\bl \partial_t u\br\zeta
						+\rho'(\chi)\Theta(w)\di(u)\partial_t\chi\zeta\big)\dx=g,
				\label{eqn:weak1}
				\end{align}
			\item[(ii)]
				balance of forces: for a.e. $x\in\Omega$
				\begin{align}
					&\partial_{tt}u-\di\bl b(\chi)\CC\e(u)\br-\di\bl a(\chi)\DD\e(\partial_t u)\br+\di\bl\rho(\chi)\Theta(w)\mathbf 1\br=\ell,
				\label{eqn:weak2}
				\end{align}
			\item[(iii)]
				one-sided variational inequality: for all $\zeta\in W_-^{1,p}(\Omega)$
				\begin{align}
					&0\leq\int_\Omega\bl \partial_t \chi\zeta+|\nabla\chi|^{p-2}\nabla\chi\cdot\nabla\zeta+\gamma(\chi)\zeta
						+\frac{b'(\chi)}{2}\CC\e(u):\e(u)\zeta+\xi\zeta\br\dx\notag\\
				\label{eqn:weak3}
					&\qquad+\int_\Omega\bl-\Theta(w)\zeta-\rho'(\chi)\Theta(w)\di(u)\zeta\br\dx,
				\end{align}
				and $\xi\in\partial I_{W_+^{1,p}(\Omega)}(z)$, i.e.
				for all $\zeta\in W_+^{1,p}(\Omega)$
				\begin{align}
					&\int_\Omega\xi(\zeta-z)\dx\leq 0,
				\label{eqn:weak4}
				\end{align}
			\item[(iv)]
				partial energy inequality:
				\begin{align}
					&\int_\Omega\frac 1p|\nabla \chi(t)|^p\dx-\int_\Omega\frac 1p|\nabla \chi(s)|^p\dx
						+\int_{s}^t\int_\Omega\Big(\gamma(\chi)+\frac{b'(\chi)}{2}\CC\e(u):\e(u)\Big)\partial_t\chi\dx\diota\notag\\
					&\qquad+\int_{s}^t\int_\Omega\Big(-\Theta(w)-\rho'(\chi)\Theta(w)\di(u)+\partial_t\chi\Big)\partial_t\chi\dx\diota
					\leq 0
				\label{eqn:weak5}
				\end{align}
				for a.e. $0\leq s\leq t\leq T$ and for a.e. $t\in(0,T)$ with $s=0$
		\end{itemize}
		are satisfied.
	\end{definition}
	\begin{remark}
		Due to the assumption $q\leq q_0<q+\frac 12$ (see (A3)), it holds $1<r<2$.
	\end{remark}
	\begin{remark}
			Let us briefly describe how the precise values for the constants $r$ and $s$ in Definition \ref{def:weakSolution} arise.
			The application of a comparison argument in the proof of the sixth a priori estimate in Lemma \ref{lemma:aPriori} requires to bound the term
			\begin{align}
				\int_0^T\bl\sup_{\|\zeta\|_{W^{2,s}}=1}\int_\Omega \widehat K_M(w)\Delta\zeta\dx\br^r\dt
				&\leq\int_0^T\bl\sup_{\|\zeta\|_{W^{2,s}}=1}\|\widehat K_M(w)\|_{L^{s/(s-1)}}\|\Delta\zeta\|_{L^{s}}\br^r\dt\notag\\
			\label{eqn:KMboundExpl}
				&\leq C\int_0^T\|\widehat K_M(w)\|_{L^{s/(s-1)}}^r\dt,
			\end{align}
			where $\widehat K_M(x):=\int_0^x K_M(y)\dy$ is a primitive of $K_M(x)$.
			By boundedness of $w$ in $L^{2(q+1)}(L^{6(q+1)})$ and growth assumption for $K$ in (A3), we get boundedness
			of $\widehat K_M(w)$ in $L^{\frac{2q+2}{2q_0+1}}\big(L^{\frac{6q+6}{2q_0+1}}\big)$ (see \eqref{eqn:KMbound} for details).
			Hence to bound \eqref{eqn:KMboundExpl}, we need to choose $r$ and $s$ such that
			$$
				\frac{s}{s-1}=\frac{6q+6}{2q_0+1}\quad\text{and}\quad
				r=\frac{2q+2}{2q_0+1}.
			$$
	\end{remark}
	\begin{remark}
			Item (iv) in Definition \ref{def:weakSolution} is called ``partial energy inequality'' because it compares the
			potential $\frac 1p\|\nabla\chi(\cdot)\|_{L^p}^p$ as part of the free energy \eqref{eqn:freeEnergy} at two different time-points
			(namely $t$ and $s$).
	\end{remark}
	By assuming better regularity for $\chi$, it is seen from the one-sided variational inequality and the partial energy inequality
	that the desired differential inclusion \eqref{eqn:system3} holds in
	$W^{1,p}(\Omega)^*$.
	\begin{lemma}
	\label{lemma:justification}
		If a weak solution additionally fulfills $\chi\in H^1(0,T;W^{1,p}(\Omega))$ we obtain for a.e. $t\in(0,T)$
		\begin{align*}
			&\qquad\chi_t+\xi+\varphi-\Delta_p\chi+\gamma(\chi)+\frac{b'(\chi)}{2}\CC\e(u):\e(u)-\theta-\rho'(\chi)\theta\di(u)=0\text{ in }W^{1,p}(\Omega)^*
		\end{align*}
		with subgradients $\xi\in\partial I_{W_+^{1,p}(\Omega)}(\chi)$ and $\varphi\in\partial I_{W_-^{1,p}(\Omega)}(\chi_t)$.
		On the left-hand side the operator $\Delta_p:W^{1,p}(\Omega)\to W^{1,p}(\Omega)^*$ denotes the usual $p$-Laplacian with no-flux condition.
	\end{lemma}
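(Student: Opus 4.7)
\bigskip
\noindent\textbf{Proof plan.} Set
\begin{equation*}
F := \partial_t\chi + \xi - \Delta_p\chi + \gamma(\chi) + \tfrac{b'(\chi)}{2}\CC\e(u):\e(u) - \theta - \rho'(\chi)\theta\di(u),
\end{equation*}
viewed as an element of $W^{1,p}(\Omega)^*$ for a.e.\ $t\in(0,T)$ (all ingredients are integrable and $W^{1,p}(\Omega)\hookrightarrow C^0(\OL\Omega)$ thanks to $p>d$). The target identity is $F+\varphi=0$, so it suffices to verify that $\varphi:=-F$ lies in $\partial I_{W_-^{1,p}(\Omega)}(\partial_t\chi)$. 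Characterizing this normal cone, this amounts to the two conditions $\langle F,\eta\rangle\leq 0$ for every $\eta\in W_+^{1,p}(\Omega)$ and $\langle F,\partial_t\chi\rangle=0$. The first inequality is exactly the one-sided variational inequality~\eqref{eqn:weak3} evaluated on $\zeta=-\eta$, so only the scalar identity requires further work.

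I would squeeze $\langle F,\partial_t\chi\rangle$ between two matching bounds. The additional regularity $\chi\in H^1(0,T;W^{1,p}(\Omega))$ makes $\partial_t\chi(t)\in W_-^{1,p}(\Omega)$ an admissible test function in~\eqref{eqn:weak3} for a.e.\ $t$, yielding $\langle F,\partial_t\chi\rangle\geq 0$. For the opposite bound, differentiate the partial energy inequality~\eqref{eqn:weak5} in $t$ via Lebesgue differentiation. The decisive ingredient is the chain rule
\begin{equation*}
\frac{d}{dt}\int_\Omega\tfrac 1p|\nabla\chi|^p\dx=\int_\Omega|\nabla\chi|^{p-2}\nabla\chi\cdot\nabla\partial_t\chi\dx=\langle-\Delta_p\chi,\partial_t\chi\rangle_{W^{1,p}},
\end{equation*}
valid for a.e.\ $t$ because $v\mapsto\frac 1p\|\nabla v\|_{L^p}^p$ is $C^1$ on $W^{1,p}(\Omega)$ with Fr\'echet derivative $-\Delta_p v$ and $\chi$ is an $H^1$-curve in $W^{1,p}$. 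Inserting this identity into the differentiated~\eqref{eqn:weak5} produces $\langle F-\xi,\partial_t\chi\rangle\leq 0$, so it remains only to see $\int_\Omega\xi\,\partial_t\chi\dx=0$.

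Testing~\eqref{eqn:weak4} with $\zeta=0$ and $\zeta=2\chi$ gives $\int_\Omega\xi\chi\dx=0$, and with $\zeta=\chi+\eta$ for $\eta\in W_+^{1,p}(\Omega)$ forces $\xi\leq 0$ a.e.; consequently $\xi\chi=0$ a.e., so $\xi$ vanishes almost everywhere on $\{\chi>0\}$. On the other hand, for a.e.\ $x\in\Omega$ the slice $t\mapsto\chi(x,t)$ is in $H^1(0,T)$ by Fubini, is nonnegative, and attains its minimum value zero on $\{\chi(x,\cdot)=0\}$; Stampacchia's theorem then gives $\partial_t\chi(x,t)=0$ for a.e.\ such $t$. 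Hence $\xi\,\partial_t\chi=0$ a.e.\ in $\Omega_T$, the squeeze closes to $\langle F,\partial_t\chi\rangle=0$, and $\varphi=-F$ has the desired properties. The main obstacle I expect is the $p$-Laplacian chain rule applied under the sole regularity $\chi\in H^1(0,T;W^{1,p})$; the complementarity $\xi\,\partial_t\chi=0$ is the second delicate point, and it is precisely there that the unilateral constraint $\chi\geq 0$ and the irreversibility $\partial_t\chi\leq 0$ cooperate.
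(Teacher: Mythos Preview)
Your argument is correct and follows essentially the same route as the paper: define $\varphi:=-F$, use the one-sided variational inequality \eqref{eqn:weak3} for the cone condition, and combine the partial energy inequality \eqref{eqn:weak5} with the $p$-Laplacian chain rule (legitimate under the extra regularity $\chi\in H^1(0,T;W^{1,p}(\Omega))$) to obtain $\langle F,\partial_t\chi\rangle\leq 0$, closing the squeeze via $\langle\xi,\partial_t\chi\rangle=0$. The only difference is that the paper simply asserts $\langle\xi,\chi_t\rangle=0$ without justification, whereas you supply a full argument for it (complementarity $\xi\chi=0$ a.e.\ from \eqref{eqn:weak4}, plus Stampacchia's lemma on time slices); this extra detail is welcome and the reasoning is sound.
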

	\begin{remark}
		We remark that the latter inclusion $\varphi\in\partial I_{W_-^{1,p}(\Omega)}(\chi_t)$ forces the monotonicity property $\chi_t\leq 0$ a.e. in
		$\Omega_T$.
	\end{remark}
	\begin{proof}
		By setting
		$$
			\varphi:=-\Big(\chi_t+\xi-\Delta_p\chi+\gamma(\chi)+\frac{b'(\chi)}{2}\CC\e(u):\e(u)-\theta-\rho'(\chi)\theta\di(u)\Big)\in W^{1,p}(\Omega)^*,
		$$
		and using (due to the enhanced regularity $\chi\in H^1(0,T;W^{1,p}(\Omega))$)
		$$
			\int_\Omega\frac 1p|\nabla \chi(t)|^p\dx-\int_\Omega\frac 1p|\nabla \chi^0|^p\dx
			=\int_0^t\int_\Omega |\nabla\chi|^{p-2}\nabla\chi\cdot\nabla\chi_t\dxs,
		$$
		property (iii) and property (iv) from Definition \ref{def:weakSolution} can be rewritten as
		\begin{align*}
			\big\langle\varphi\,,\;\zeta\big\rangle_{W^{1,p}}\leq 0\text{ and }
			-\big\langle\varphi\,,\;\chi_t\big\rangle_{W^{1,p}}\leq 0
		\end{align*}
		for all $\zeta\in W_-^{1,p}(\Omega)$ and a.e. $t\in(0,T)$.
		Here we have used the fact that $\left\langle \xi,\chi_t\right\rangle=0$.
		Adding these inequalities yields the inclusion $\varphi\in \partial I_{W_-^{1,p}(\Omega)}(\chi_t)$.
		\ep
	\end{proof}
	\begin{theorem}
	\label{theorem:existence}
		Let the Assumptions (A1)-(A8) be satisfied.
		Moreover, let the initial values $u^0\in H_0^2(\Omega;\R^d)$, $v^0\in H_0^1(\Omega;\R^d)$, $w^0\in L^2(\Omega)$ and $\chi^0\in W^{1,p}(\Omega)$ be given
		and assume that $w^0\geq 0$ and $0\leq\chi^0\leq 1$, $g\geq0$.
		Then, there exists a weak solution $(u,w,\chi,\xi)$ in the sense of Definition \ref{def:weakSolution}.
	\end{theorem}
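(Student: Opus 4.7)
The plan is to construct solutions via a two-level approximation: first regularize the strongly nonlinear heat equation by truncating the unbounded nonlinearities, solve the truncated system by a time-discrete variational scheme, and then remove both approximations. More precisely, for each $M\in\N$ I would introduce the truncations
\begin{align*}
K_M(w):=K(\min\{w,M\}),\qquad \Theta_M(w):=\Theta(\min\{w,M\}),
\end{align*}
so that \eqref{eqn:system1}--\eqref{eqn:system3} become a system with globally bounded coefficients. For this truncated system I would set up an implicit Euler scheme with time step $\tau>0$, solving the three equations in a decoupled or staggered fashion at each step: the discrete $u$-equation is a linear elliptic system (solvable since $a(\chi),b(\chi)\ge\eta>0$), the discrete $\chi$-equation is a variational inequality on $W_+^{1,p}(\Omega)$ with the irreversibility constraint (solvable by minimization of a convex-plus-Lipschitz functional), and the discrete $w$-equation is a quasilinear elliptic problem that can be solved by monotonicity/Schauder methods. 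A $W^{2,s}$-regularity estimate on $u$ (using $\DD=\mu\CC$ from (A6)) gives enough regularity of $\e(u)$ to make sense of the quadratic term in the $\chi$-equation.

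Next I would derive a priori estimates for the time-interpolants of the discrete solutions. The energy/dissipation identity obtained by testing the $u$-equation with $u_t$, the $\chi$-equation with $\chi_t$, and using the natural cancellations of the $\rho$-terms yields $H^1(0,T;H^1_0)$ bounds on $u$, $W^{1,\infty}(L^2)\cap L^\infty(H^1)$ bounds on $u_t$ (via a further test with $u_{tt}$, which also needs (A6) for the divergence form of the viscous term), and $L^\infty(W^{1,p})\cap H^1(L^2)$ bounds on $\chi$. The central technical estimate is of Boccardo--Gallouët type on $w$: testing the $w$-equation with $-(w+1)^{-\alpha}$ for $\alpha$ between $1/\sigma$ and $2q-1$ gives an $L^2(H^1)$ bound on a power of $w$, and testing with $w$ itself—combined with the splitting argument on $\{w\le M\}\cup\{w>M\}$ and the condition $\sigma\ge 3$—gives the uniform $L^\infty(L^2)\cap L^{2(q+1)}(L^{6(q+1)})$ bound. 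A comparison in the $w$-equation then delivers $w_t\in W^{1,r}(W_\nu^{2,s})^*$ with the stated $r,s$. Positivity $w\ge 0$ follows from testing with $w^-$ and using $\Theta(w)=0$ for $w\le 0$.

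With $\tau\to 0$ at fixed $M$, I would combine Aubin--Lions compactness (to get strong convergence of $u_t$, $\chi$, $\e(u)$) with weak/weak-$\star$ convergences to pass to the limit in all nonlinear terms, obtaining a solution of the truncated continuous system including the one-sided variational inequality \eqref{eqn:weak3}, the constraint multiplier $\xi$ via \eqref{eqn:weak4}, and the partial energy inequality \eqref{eqn:weak5}. The last two are obtained by the approximation technique of \cite{WIAS1520}: one uses the compact embedding $W^{1,p}\hookrightarrow C^{0,\alpha}$ from (A8) to approximate admissible sign-constrained test functions and to lower-semicontinuously pass to the limit in the energy. Then $M\to\infty$ uses the $M$-uniform estimates of the previous step, giving strong convergence of $w$ in $L^2(L^2)$ (by Aubin--Lions with $w_t\in W^{1,r}$) and hence a.e.\ convergence, which allows identifying $K_M(w)\to K(w)$, $\Theta_M(w)\to\Theta(w)$ in the appropriate Lebesgue spaces using the $L^{2(q+1)}(L^{6(q+1)})$ bound on $w$.

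The main obstacle is controlling the new coupling term $\rho'(\chi)\Theta(w)\di(u)\chi_t$ in the heat equation \eqref{eqn:system1}, which is genuinely trilinear and has no sign. The strategy is to absorb it using the Boccardo--Gallouët test: $\rho'(\chi)\Theta(w)(w+1)^{-\alpha}$ is bounded in $L^\infty(\Omega_T)$ thanks to (A2) and the choice $\alpha\ge 1/\sigma$, so the integral is controlled by the already-estimated $\|\di(u)\chi_t\|_{L^1}$ via $\chi_t\in L^2(L^2)$ and $\di(u)\in L^\infty(L^2)$, and a similar balancing is needed in the $L^2(L^3)$ estimate (which forces $\sigma\ge 3$). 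Passing to the limit in this term, together with $\rho(\chi)\Theta(w)\di(u_t)$, requires the strong convergence of both $\chi$ and $w$ plus the weak convergence of $u_t$ in $L^2(H^1)$, and it is this interplay that makes the coupled limit passage delicate.
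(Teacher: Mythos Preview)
Your overall architecture---truncate $K,\Theta$ at level $M$, time-discretize, derive a priori estimates, pass $\tau\downarrow 0$, then $M\uparrow\infty$ with a Boccardo--Gallou\"et test and the splitting $\{w\le M\}\cup\{w>M\}$---is exactly the route taken in the paper, and your identification of the trilinear term $\rho'(\chi)\Theta(w)\di(u)\chi_t$ as the main enemy is correct.

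Two points in your outline would need repair, however. First, the paper does \emph{not} solve the discrete step in a ``decoupled or staggered'' fashion; it sets up the three equations as a single coupled abstract inclusion $\partial\Psi(p)+A(p)\ni f$ on $H_0^1\times H^1\times W^{1,p}$ and applies a pseudomonotone Leray--Lions argument. The reason is that the semi-implicit placement of old/new values (e.g.\ $\Theta_M(w_\tau^{k-1})$ in some terms but $\Theta_M(w_\tau^k)$ in others, and a convex--concave splitting $b=b_1+b_2$) is tuned so that testing \eqref{eqn:discrHeatEq} with $\tau$, \eqref{eqn:discrMomentumEq} with $u_\tau^k-u_\tau^{k-1}$ and \eqref{eqn:discrDamageEq} with $\chi_\tau^k-\chi_\tau^{k-1}$ makes the $\rho$-coupling terms cancel \emph{exactly}. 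A generic staggered scheme will not produce these cancellations and the energy estimate will not close.

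Second, and more substantively, the test ``with $u_{tt}$'' that you propose yields only $u_t\in L^\infty(H^1)\cap H^1(L^2)$, hence merely $\di(u)\in L^\infty(L^2)$. This is not enough: in the fourth and fifth a priori estimates one must control $\rho'(\chi)\Theta_M(w)\di(u)\chi_t\,w$ in $L^1$, which in $d=3$ forces $\di(u)\in L^\infty(L^6)$ (equivalently $u\in L^\infty(H^2)$), and Definition~\ref{def:weakSolution} itself requires $u\in H^1(0,T;H_0^2)$. The paper obtains this by testing the momentum balance with $-\di(\DD\e(\partial_t u))$ and invoking the elliptic regularity estimate $\|v\|_{H^2}\le C\|\di(\DD\e(v))\|_{L^2}$; here (A6), i.e.\ $\DD=\mu\CC$, is genuinely used to reduce the variable-coefficient operator $\di(a(\chi)\DD\e(\cdot))$ to a constant-coefficient one plus lower-order terms. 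This step also requires, as an intermediate estimate, a bound on $\nabla\Theta_M(w)$ in $L^2(L^2)$, obtained by testing the heat equation with $\Theta_M(w)$---an estimate absent from your list. The $W^{2,s}$ argument you mention is only the discrete-level existence/regularity for each $u_\tau^k$; it does not by itself give the $\tau$- and $M$-uniform $H^1(H^2)$ bound.
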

	\begin{remark}
	\label{remark:homDirichlet}
			One essential ingredient in the proof of Theorem \ref{theorem:existence} is the application of the standard $H^2$-regularity result for elliptic systems
			using homogeneous Dirichlet boundary conditions on $u$ (see proof of Lemma \ref{lemma:discrSys}).
			Since more general regularity results are available,
			it seem conceivable to extend Theorem \ref{theorem:existence} to mixed-boundary conditions for $u$ where the
			Dirichlet and Neumann part
			%are relatively open in $\partial\Omega$ and
			satisfy $\OL{\Gamma_\mathrm{D}}\cap \OL{\Gamma_\mathrm{N}}=\emptyset$ and
			$\OL{\Gamma_\mathrm{D}}\cup\OL{\Gamma_\mathrm{N}}=\partial\Omega$. Indeed, without the latter  geometric condition,
			the elliptic regularity results ensuring the
			regularity of $u$ may fail to hold, see~\cite[Chap.~6,~Sec.~6.1]{MH94} and \cite[Chap.~VI,~Sec.~6.3]{Cia88}.
		
			However, since the main focus of this work is laid on the analysis of coupling different physical processes,
			we decided to restrict ourselves to case of homogeneous Dirichlet boundaries for the displacement field.
	\end{remark}
	The proof is carried out in the following two sections. It is based on a time-discretization scheme and on an approximation argument involving a truncation of
	$K$ and $\Theta$ (cf. also \cite{RR12}).

\section{Existence of weak solutions for the truncated system}
 \label{section:existence1}
	To keep the presentation short, we assume $g=\ell=0$ in \eqref{eqn:system1}-\eqref{eqn:system2}.
%	\textcolor{red}{as well as $\CC=\DD=\mathds 1$.}
	
		The existence proof presented in this section is based on a double approximation technique: a truncation and a time-discretization scheme.
		Let us point out why truncation on the coefficients $K$ and $\Theta$ is used in the first place:
	
		Assume for a moment that we have established the energy estimate (first a priori estimate; see Lemma \ref{lemma:aPrioriDiscr}).
		Then, to obtain an $L^2(0,T;H^1(\Omega))\cap L^{2(q+1)}(0,T;L^{6(q+1)}(\Omega))\cap L^\infty(0,T;L^2(\Omega))$-estimate
		for the enthalpy $w$, it is necessary to test the enthalpy equation \eqref{eqn:system1} with $w$ (fourth a priori estimate).
		This results to further challenges since one needs to guarantee that the integral $\int_{\Omega_T}K(w)|\nabla w|^2$ exists.
		Beyond that, we need an $L^\infty(L^6)$-bound available for $u$ (in 3D).
		To this end, the equation \eqref{eqn:system2} should be tested with $-\di(\e(u_t))$ (third a priori estimate).
		But because of the $\int_{\Omega_T}\di(\rho(\chi)\Theta(w)\mathds 1)\cdot\di(\e(u_t))$-term,
		an $L^2(L^2)$-bound for $\nabla(\Theta(w))$ is necessary.
		This advises us to test the enthalpy equation with $\Theta(w)$ (second a priori estimate) in order to obtain the mentioned $L^2(L^2)$-bound.
		We are then faced with the difficulty to estimate $\Theta(w)$ in $L^\infty(L^\infty)$.
	
		To overcome these difficulties, 
	we firstly prove existence of weak solutions
	to a truncated system of \eqref{eqn:system1}-\eqref{eqn:system3} where $K$ and $\Theta$ are substituted by $K_M$ and $\Theta_M$ for $M\geq 0$ defined by
	%	Furthermore, we will make use of the truncated functions $\Theta_M$ of $\Theta$ at $M\geq 0$, i.e.
	\begin{align}
	\label{eqn:KTetha}
		\Theta_M(x):=
		\begin{cases}
			\Theta(M)&\text{if }x>M,\\
			\Theta(x)&\text{if }-M\leq x\leq M,\\
			\Theta(-M)&\text{if }x<-M,
		\end{cases}
		\qquad K_M(x):=
		\begin{cases}
			K(M)&\text{if }x>M,\\
			K(x)&\text{if }-M\leq x\leq M,\\
			K(-M)&\text{if }x<-M.
		\end{cases}
	\end{align}
	We remind that $\Theta(w)=0$ for all $w\leq 0$ by Assumption (A3).

	The truncation function $\C T_M:\R\to\R$ at the height $M$ is given via
	\begin{align*}
		\C T_M(x)=
		\begin{cases}
			M&\text{if }x>M,\\
			x&\text{if }-M\leq x\leq M,\\
			-M&\text{if }x<-M.
		\end{cases}
	\end{align*}
	Note that the crucial properties $\Theta_M(w)=\Theta(\C T_M(w))$ and $K_M(w)=K(\C T_M(w))$ are satisfied.
	
		In order to prove Theorem \ref{theorem:existence}, the investigation of the limit $M\uparrow\infty$
		requires more elaborate a priori estimates which are postponed to Section \ref{section:existence2}.
		Let us remark that the estimates there can also be adapted to a time-discrete version
		such that the limits $\tau\downarrow 0$ and $M\uparrow\infty$ may be performed simultaneously.
		However, we decided to separate this passage into two steps and show that the limit analysis for $\tau\downarrow 0$
		and fixed $M\in\N$ are conducted with fairly simpler arguments.

\subsection{Time-discrete system}
\label{section:timediscrSys}	
	In this subsection, we will prove existence of weak solutions for a time-discrete and truncated
	version of system \eqref{eqn:system1}-\eqref{eqn:system3} by using a semi-implicit Euler scheme.
	The scheme is carefully chosen such that we can derive an energy estimate (see Lemma \ref{lemma:aPrioriDiscr} (i)).
	
	To this end, we consider an equidistant partition
	$\{0,\tau,2\tau,\ldots,T\}$ of $[0,T]$ where $\tau>0$ denotes the time-discretization fineness.
	Moreover, let $T_\tau:=T/\tau$ be the final time index (note that $T/\tau\in\N$ by the assumed equidistancy of the partition).
	We set $(u_\tau^0,w_\tau^0,\chi_\tau^0):=(u^0,w^0,\chi^0)$ and  $u_\tau^{-1}:=u^0-\tau v^0$
	and perform a recursive procedure.
	
	In the following, we adopt the notation $D_{\tau,k}(w)=\tau^{-1}(w_\tau^k-w_\tau^{k-1})$ (as well as for $D_{\tau,k}(u)$ and $D_{\tau,k}(\chi)$).
	Let, furthermore, $v_\tau^k$ be defined as
	\begin{align}
	\label{eqn:vDef}
		v_\tau^k:=\frac{u_\tau^k-u_\tau^{k-1}}{\tau}.
	\end{align}
	
	The existence of weak solutions for the time-discrete system is proven in the following.
	\begin{lemma}
	\label{lemma:discrSys}
		For every equidistant partition of $[0,T]$ with fineness $\tau>0$, there exists a sequence $\{(u_\tau^k,w_\tau^k,\chi_\tau^k,\xi_\tau^k)\}_{k=1}^{T_\tau}$ in the space
		$H_0^2(\Omega;\R^d)\times H^1(\Omega)\times W^{1,p}(\Omega)\times W^{1,p}(\Omega)^*$
		such that for all $k\in\{1,\ldots,T_\tau\}$:
		\begin{enumerate}
			\item[(i)]
				for all $\zeta\in H^{1}(\Omega)$
				\begin{align}
					&\int_\Omega\bl D_{\tau,k}(w)\zeta+K_{M}(w_\tau^{k-1})\nabla w_\tau^{k}\cdot\nabla\zeta
						+\Theta_M(w_\tau^{k-1})D_{\tau,k}(\chi)\zeta\br\dx\notag\\
					&\quad+\int_\Omega\rho(\chi_\tau^{k-1})\Theta_M(w_\tau^{k})\di\bl D_{\tau,k}(u)\br\zeta\dx\notag\\
				\label{eqn:discrHeatEq}
					&\quad+\int_\Omega\rho'(\chi_\tau^{k-1})\Theta_M(w_\tau^{k})\di(u_\tau^{k-1})D_{\tau,k}(\chi)\zeta\dx=0,
				\end{align}
			\item[(ii)]
				for a.e. $x\in\Omega$
				\begin{align}
					&D_{\tau,k}^2(u)-\di\bl b(\chi_\tau^k)\CC\e(u_\tau^k)\br
						-\di\bl a(\chi_\tau^k)\DD\e(D_{\tau,k}(u_\tau^k))\br\notag\\
				\label{eqn:discrMomentumEq}
					&\qquad+\di\bl\rho(\chi_\tau^{k-1})\Theta_M(w_\tau^{k})\mathbf 1\br=0,
				\end{align}
			\item[(iii)]
				for all $\zeta\in W^{1,p}(\Omega)$%with $0\leq\zeta+\chi_\tau^k\leq \chi_\tau^{k-1}$
				\begin{align}
					&0=\int_\Omega\bl D_{\tau,k}(\chi)\zeta+|\nabla\chi_\tau^k|^{p-2}\nabla\chi_\tau^k\cdot\nabla\zeta+\gamma(\chi_\tau^k)\zeta
						-\Theta_M(w_\tau^{k-1})\zeta\br\dx\notag\\
				\label{eqn:discrDamageEq}
					&\qquad+\int_\Omega\bl\frac{b_1'(\chi_\tau^{k})+b_2'(\chi_\tau^{k-1})}{2}\CC\e(u_\tau^{k-1}):\e(u_\tau^{k-1})\zeta
						-\rho'(\chi_\tau^{k-1})\Theta_M(w_\tau^{k})\di(u_\tau^{k-1})\zeta\br\dx\notag\\
					&\qquad+\langle\xi,\zeta\rangle_{W^{1,p}}
				\end{align}
				with $\xi\in \partial I_{Z_\tau^{k-1}}(\chi_\tau^k)$, where $Z_\tau^{k-1}$ is given by
				\begin{align*}
					Z_\tau^{k-1}:=\big\{f\in W^{1,p}(\Omega)\,|\,0\leq f\leq \chi_\tau^{k-1}\big\}
				\end{align*}
				and $b=b_1+b_2$ denotes a convex-concave decomposition of $b$, e.g.
				\begin{align*}
					&b_1(r):= b(0)+\int_0^r\Big(b'(0)+\int_0^s \max\{b''(\mu),0\}\,\mathrm d\mu\Big)\ds,\\
					&b_2(r):= \int_0^r\Big(\int_0^s \min\{b''(\mu),0\}\,\mathrm d\mu\Big)\ds.
				\end{align*}
		\end{enumerate}
	\end{lemma}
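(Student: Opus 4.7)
The plan is to proceed by induction on $k\in\{1,\ldots,T_\tau\}$, taking $(u_\tau^0,w_\tau^0,\chi_\tau^0)=(u^0,w^0,\chi^0)$ and $u_\tau^{-1}=u^0-\tau v^0$ as base data. Fix $k\ge 1$ and assume that $(u_\tau^{k-1},w_\tau^{k-1},\chi_\tau^{k-1})$ has already been constructed. Since the system (i)--(iii) is cyclically coupled in the unknowns $(w_\tau^k,u_\tau^k,\chi_\tau^k)$ only through the bounded nonlinearity $\Theta_M(w_\tau^k)$---both (ii) and (iii) feel $w_\tau^k$ exclusively via $\Theta_M(w_\tau^k)$, while $u_\tau^k$ enters (i) only through $\di(D_{\tau,k}(u))$---I would set up a Schauder (or, in the absence of uniqueness, a Kakutani) fixed-point iteration on $w$. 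Concretely, I would define a map $\mathcal F:\overline B_R(0)\subset L^\infty(\Omega)\to L^\infty(\Omega)$ which, given a candidate $\widetilde w$, replaces every implicit occurrence of $\Theta_M(w_\tau^k)$ in (ii)--(iii) and in the nonlinear term of (i) by $\Theta_M(\widetilde w)$, and then solves the three decoupled problems in sequence to produce a new $w=:\mathcal F(\widetilde w)$.

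The three subproblems are treated as follows. First, with $\widetilde w$ frozen, (iii) becomes a variational inequality for $\chi_\tau^k$ alone, equivalent to minimising
\[
J(\chi):=\frac{1}{2\tau}\|\chi-\chi_\tau^{k-1}\|_{L^2}^2+\frac{1}{p}\|\nabla\chi\|_{L^p}^p+\int_\Omega\!\Big(\widehat\gamma(\chi)+\tfrac12 b_1(\chi)\,\CC\e(u_\tau^{k-1}){:}\e(u_\tau^{k-1})\Big)\dx-\int_\Omega F_{k-1}(\widetilde w)\,\chi\dx
\]
over the closed convex set $Z_\tau^{k-1}=\{\chi\in W^{1,p}(\Omega):0\le\chi\le\chi_\tau^{k-1}\}$, with $F_{k-1}(\widetilde w)$ gathering the remaining explicit lower-order contributions (from $\Theta_M(w_\tau^{k-1})$, $\rho'(\chi_\tau^{k-1})\Theta_M(\widetilde w)\di(u_\tau^{k-1})$, and $b_2'(\chi_\tau^{k-1})$). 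Coercivity of $J$ comes from the $\|\nabla\chi\|_{L^p}^p$ term, and weak lower semicontinuity from convexity of $b_1$ (supplied by the convex-concave decomposition $b=b_1+b_2$) together with the compact embedding $W^{1,p}(\Omega)\hookrightarrow C(\overline\Omega)$ (valid by $p>d$, (A8)), which in particular renders the non-monotone $\gamma\in C^1([0,1])$ a compact perturbation on the admissible range $[0,1]$. The direct method then yields a minimiser $\chi$, and its Euler--Lagrange inclusion provides the multiplier $\xi\in\partial I_{Z_\tau^{k-1}}(\chi)$ required in (iii). Second, with this $\chi$ in hand, (ii) reduces to the linear elliptic system
\[
\tau^{-2}u-\di\bl b(\chi)\CC\e(u)\br-\tau^{-1}\di\bl a(\chi)\DD\e(u)\br=\text{(known data)}
\]
with homogeneous Dirichlet boundary, whose bilinear form is coercive on $H_0^1(\Omega;\R^d)$ by (A5)--(A6); Lax--Milgram gives a unique weak solution, and standard $H^2$-regularity for elliptic systems on the $C^2$-domain $\Omega$, together with the tensor structure $\DD=\mu\CC$ from (A6), lifts it to $u\in H_0^2(\Omega;\R^d)$. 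Third, with $\chi$ and $u$ fixed, (i) reduces to the linear uniformly elliptic equation
\[
\tau^{-1}w-\di\bl K_M(w_\tau^{k-1})\nabla w\br=\text{(known data)}
\]
(uniform ellipticity by $c_1\le K_M(w_\tau^{k-1})\le K(M)$), which is uniquely solvable in $H^1(\Omega)$ by Lax--Milgram, and an $L^\infty$-bound $\|w\|_{L^\infty}\le R$ follows by a Stampacchia truncation since $\Theta_M$ is bounded and all source data lie in $L^\infty$.

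It remains to close the fixed-point argument. Boundedness of $\Theta_M$ and the Stampacchia estimate produce a radius $R=R(\tau,M,k)$ depending only on known data (and not on $\widetilde w$), so $\mathcal F$ sends $\overline B_R(0)$ into itself. Continuity of $\mathcal F$ follows from continuous (upper semicontinuous, if the damage minimiser is not unique) dependence of each of the three subproblems on its data, while compactness is provided by the compact embedding of $H^1(\Omega)$ into $L^q(\Omega)$ for every finite $q$ combined with the a priori $L^\infty$-bound. Schauder's (or Kakutani's) theorem then delivers a fixed point $w_\tau^k=\mathcal F(w_\tau^k)$, and the associated $(u_\tau^k,\chi_\tau^k,\xi_\tau^k)$ recovered along the way satisfy (i)--(iii). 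The hard part will be precisely this cyclic coupling through $\Theta_M(w_\tau^k)$, which enters (ii) and (iii) with neither sign nor monotonicity structure; what makes the fixed-point argument close at the present truncated-discrete level is the boundedness of $K_M$ and $\Theta_M$ built into \eqref{eqn:KTetha}, which lets one preserve a fixed $L^\infty$-ball. The sharper growth conditions (A2)--(A3) play no role here and will become indispensable only later in Section~\ref{section:existence2} when sending $M\uparrow\infty$; analogously, the convex-concave split $b=b_1+b_2$ is introduced precisely so that the implicit damage step (iii) remains a convex minimisation in $\chi_\tau^k$ even though $b$ itself need not be convex.
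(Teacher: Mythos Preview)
Your strategy differs genuinely from the paper's. Rather than decoupling and iterating, the paper treats the three equations simultaneously as a single abstract inclusion $\partial\Psi(p)+A(p)\ni f$ on the product space $X=H_0^1(\Omega;\R^d)\times H^1(\Omega)\times W^{1,p}(\Omega)$, where $\Psi=I_Y$ is the indicator of the constraint set $Y=\{(u,w,\chi)\in X:0\le\chi\le\chi_\tau^{k-1}\}$ and $A=(A_1,A_2,A_3)$ collects all remaining terms. One checks that $A$ is pseudomonotone and coercive and that $\Psi$ admits a suitable convex G\^{a}teaux-differentiable regularisation (the Yosida approximation), and then invokes a Leray--Lions type existence theorem for non-potential inclusions, namely \cite[Theorem~5.15]{Rou13}. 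This handles the cyclic coupling in one stroke and avoids any fixed-point iteration or multivalued-map issues; the price is verifying pseudomonotonicity of the combined operator.

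Your fixed-point route is viable in spirit, but two points need repair. First, setting up Schauder in a ball of $L^\infty(\Omega)$ does not work for $d\ge 2$: the embedding $H^1(\Omega)\hookrightarrow L^\infty(\Omega)$ fails, and ``$H^1$-bound plus a priori $L^\infty$-bound'' does not produce compactness in the $L^\infty$-norm. Relatedly, your Stampacchia claim that ``all source data lie in $L^\infty$'' is false: the term $\di(D_{\tau,k}(u))$ is only in $H^1\hookrightarrow L^6$ in three dimensions. Since $\Theta_M$ is already globally bounded by $\Theta(M)$, nothing is gained by working in $L^\infty$; set the fixed point up in $L^2(\Omega)$ instead, where $H^1\hookrightarrow L^2$ compactly and $\widetilde w\mapsto\Theta_M(\widetilde w)$ is Lipschitz. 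Second, the $H^2$-regularity for $u$ is not ``standard'': the coefficients $b(\chi),a(\chi)$ are only $W^{1,p}$ with $\chi\in W^{1,p}(\Omega)$, so constant-coefficient elliptic regularity does not apply directly. The paper exploits $\DD=\mu\CC$ to reduce to a \emph{scalar} coefficient, divides it out to obtain a constant-coefficient system with right-hand side \eqref{eqn:hatG} containing $\CC\e(u_\tau^k)\cdot\nabla\chi_\tau^k$, and then bootstraps $W^{2,s}$-regularity from $s_0=2p/(2+p)$ up to $s=2$ in finitely many iterations. You correctly flag $\DD=\mu\CC$ as essential, but the iteration is where the actual work lies.
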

	\begin{remark}
			The combination of explicit and implicit terms in the time-discretization is chosen
			in a way that the energy estimate
			is obtained by testing \eqref{eqn:discrHeatEq} with $\tau$,
			testing \eqref{eqn:discrMomentumEq} with $u_\tau^k-u_\tau^{k-1}$,
			testing \eqref{eqn:discrDamageEq} with $\chi_\tau^k-\chi_\tau^{k-1}$
			and adding them together (see the proof of the first a priori estimate in Lemma \ref{lemma:aPrioriDiscr}).
			In particular, the convex-concave decomposition of $b$ ensures the crucial estimate \eqref{eqn:bConvConc}.
			Then the terms
			\begin{align*}
				&\frac12(b_1'(\chi_\tau^{k})+b_2'(\chi_\tau^{k-1}))(\chi_\tau^{k-1}-\chi_\tau^k)\CC\e(u_\tau^{k-1}):\e(u_\tau^{k-1}),
				&&\Theta_M(w_\tau^{k-1})D_{\tau,k}(\chi),\\
				&\rho(\chi_\tau^{k-1})\Theta_M(w_\tau^{k})\di\bl D_{\tau,k}(u)\br,
				&&\rho'(\chi_\tau^{k-1})\Theta_M(w_\tau^{k})\di(u_\tau^{k-1})D_{\tau,k}(\chi)
			\end{align*}
			cancel out.
	\end{remark}
	\begin{proof}[Proof of Lemma \ref{lemma:discrSys}]
		We will trace back this PDE problem to the abstract inclusion problem
		\begin{align}
		\label{eqn:abstractInclusion}
			\partial\Psi(p)+A(p)\ni f,
		\end{align}
		where $A:X\to X^*$ is pseudomonotone and $\Psi:X\to\R\cup\{+\infty\}$ is a convex, proper and lower semicontinuous functional.
		
			Existence of solutions for \eqref{eqn:abstractInclusion} is ensured in \cite[Theorem 5.15]{Rou13} via a Leray-Lions type theorem for non-potential inclusions if $\Psi$
			possess
		a convex and G\^{a}teaux differentiable regularization $\Psi_\varepsilon:X\to\R$ such that $\Psi_\varepsilon$
		is bounded and radially continuous and
		\begin{align*}
			&\limsup_{\varepsilon\downarrow 0}\Psi_\varepsilon(g)\leq \Psi(g)\text{ for all }g\in X,\\
			&\liminf_{\varepsilon\downarrow 0}\Psi_\varepsilon(g_\varepsilon)\geq\Psi(g)\text{ for all }g_\varepsilon\to g\text{ weakly in } X.
		\end{align*}
		
		In our case, we use the spaces
		\begin{align*}
			&X=H_0^1(\Omega;\R^d)\times H^1(\Omega)\times W^{1,p}(\Omega),\\
			&Y=\big\{(u,w,\chi)\in H_0^1(\Omega;\R^d)\times H^1(\Omega)\times W^{1,p}(\Omega)\,|\,0\leq \chi\leq \chi_\tau^{k-1}\big\}\subseteq X
		\end{align*}
		and the operators (we write $A=(A_1,A_2,A_3)$)
		\begin{align*}
			&\Psi=I_{Y}\text{ (indicator function $I_Y:X\to\R\cup\{\infty\}$ of the set $Y$) },\\
			&A_1(u,w,\chi)=u-\tau^2\di\bl b(\chi)\CC\e(u)\br-\tau\di\bl a(\chi)\DD\e(u-u_\tau^{k-1})\br\\
			&\hspace*{5.9em}+\tau^2\di\bl\rho(\chi_\tau^{k-1})\Theta_M(w)\mathbf 1\br,\\
			&A_2(u,w,\chi)=w-\tau\di\bl K_M(w_\tau^{k-1})\nabla w\br+\Theta_M(w_\tau^{k-1})\chi+\rho(\chi_\tau^{k-1})\Theta_M(w)\di\bl u-u_\tau^{k-1}\br\\
			&\hspace*{5.9em}+\rho'(\chi_\tau^{k-1})\Theta_M(w)\di\bl u_\tau^{k-1}\br(\chi-\chi_\tau^{k-1}),\\
			&A_3(u,w,\chi)=\chi-\tau\Delta_p\chi+\frac{b_1'(\chi)}{2}\CC\e(u_\tau^{k-1}):\e(u_\tau^{k-1})+\gamma(\chi)-\rho'(\chi_\tau^{k-1})\Theta_M(w)\di(u_\tau^{k-1})
		\end{align*}
		and the element $f\in X^*$ given by
		\begin{align*}
			f=
			\left(
			\begin{matrix}
				2u_\tau^{k-1}-u_\tau^{k-2}\hspace*{10.8em}\\
				w_\tau^{k-1}+\Theta_M(w_\tau^{k-1})\chi_\tau^{k-1}\hspace*{6.6em}\\
				\chi_\tau^{k-1}-\frac{b_2'(\chi_\tau^{k-1})}{2}|\e(u_\tau^{k-1})|^2+\Theta_M(w_\tau^{k-1})
			\end{matrix}
			\right).
		\end{align*}
		Note that $Y$ is a convex, nonempty and closed subspace of $X$ since $\chi_\tau^{k-1}\in C(\OL{\Omega})$.
		
		Now, it can be checked that the operator $A$ is pseudomonotone and coercive.
			The regularization $\Psi_\varepsilon$ can be chosen to be
			the Yosida approximation $\Psi_\varepsilon$ of $\Psi$.
			Finally, the existence result in \cite[Theorem 5.15]{Rou13}
			yields a solution to the problem \eqref{eqn:abstractInclusion} and, therefore, to (i)-(iii).
		
			It remains to show $H_0^2(\Omega;\R^d)$-regularity of $u_\tau^k$.
			Let us consider the three-dimensional case $d=3$.
%			To emphasize where condition \eqref{eqn:viscousTensor} is needed, we involve the tensors $\CC$ and $\DD$
%			in the calculations.
			
			First of all, we rewrite \eqref{eqn:discrMomentumEq} in the following form
			\begin{align*}
				\int_\Omega \big(\tau^2 b(\chi_\tau^k)\CC+\tau a(\chi_\tau^k)\DD\big)\e(u_\tau^k):\e(\zeta)\dx
				=\int_\Omega g\cdot\zeta\dx
			\end{align*}
			valid for all $\zeta\in H_0^1(\Omega;\R^d)$ and with right-hand side
			(note that $u_\tau^{k-1}\in H_0^2(\Omega;\R^d)$)
			$$
				g:=-u_\tau^k+2u_\tau^{k-1}-u_\tau^{k-2}-\tau\di\bl a(\chi_\tau^k)\DD\e(u_\tau^{k-1})\br-\tau^2\di\bl\rho(\chi_\tau^{k-1})\Theta_M(w_\tau^k)\mathbf 1\br
				\in L^2(\Omega;\R^d).
			$$
			Condition \eqref{eqn:viscousTensor} shows
			\begin{align}
			\label{eqn:ellipticEq}
				\int_\Omega \big(\tau^2 b(\chi_\tau^k)+\tau a(\chi_\tau^k)\mu\big)\CC\e(u_\tau^k):\e(\zeta)\dx
				=\int_\Omega g\cdot\zeta\dx.
			\end{align}
			Since the coefficient function $\tau^2 b(\chi_\tau^k)+\tau a(\chi_\tau^k)\mu\in W^{1,p}(\Omega)$ in \eqref{eqn:ellipticEq}
			is scalar-valued and bounded from below by a positive constant (see (A5)), we get
			$
				\bl\tau^2 b(\chi_\tau^k)+\tau a(\chi_\tau^k)\mu\br^{-1}\in W^{1,p}(\Omega).
			$
			Testing \eqref{eqn:ellipticEq} with $\zeta=\bl\tau^2 b(\chi_\tau^k)+\tau a(\chi_\tau^k)\mu\br^{-1}\varphi$ where $\varphi\in H_0^1(\Omega;\R^d)$ is another test-function yields
			\begin{align}
			\label{eqn:ellipticEqTrans}
				\int_\Omega \CC\e(u_\tau^k):\e(\varphi)\dx
				=\int_\Omega \widehat g\cdot\varphi\dx
			\end{align}
			with the new right-hand side
			\begin{align}
			\label{eqn:hatG}
				\widehat g:=\frac{1}{\tau^2 b(\chi_\tau^k)+\tau a(\chi_\tau^k)\mu}g
					+\CC\e(u_\tau^k)\cdot\frac{\tau^2 b'(\chi_\tau^k)+\tau a'(\chi_\tau^k)\mu}{\tau^2 b(\chi_\tau^k)+\tau a(\chi_\tau^k)\mu}\nabla\chi_\tau^k.
			\end{align}
			Since $\nabla\chi_\tau^k\in L^p(\Omega;\R^d)$ and $\e(u_\tau^k)\in L^2(\Omega;\R^{d\times d})$, we get
			$\widehat g\in L^{2p/(2+p)}(\Omega;\R^d)$.
%			By using $p>d$, we know $\widehat g\in L^{\frac{6}{5}+\varepsilon}(\Omega;\R^d)$ for some $\varepsilon>0$.
			
			Now, we will use an iteration argument in combination with a regularity result from \cite{MH94}
%			(see \cite{MN10,CDN10} for anisotropic stiffness tensors as in \eqref{eqn:anisotropy} from Assumption (A6))
			applied to the linear elasticity system \eqref{eqn:ellipticEqTrans}
			to gain $H^2$-regularity for $u_\tau^k$.
			
			To this end, let us assume that $\widehat g$ is in some $L^s(\Omega;\R^{d})$-space with $s\in [6/5,2]$.
			At the beginning this will be $s=s_0:=2p/(2+p)$ (see above) which is greater than $6/5$ due to $p>3$ by (A8).
			By using Assumption (A6), the regularity theorem in
			\cite[Chap.~6,~Theorem~1.11~(i)]{MH94} (see also \cite[Theorem 6.3-6]{Cia88} for isotropic $\CC$)
			shows that $u_\tau^k\in W^{2,s}(\Omega;\R^{d})$.
			By using the Sobolev embedding theorem, we obtain $\e(u_\tau^k)\in L^{3s/(3-s)}(\Omega;\R^{d\times d})$.
			This, on the other hand, implies
			\begin{align}
			\label{eqn:gIntegrability}
				\widehat g\in L^{\min\big\{\frac{3ps}{3p +3s - ps},2\big\}}(\Omega;\R^{d}).
			\end{align}
			Therefore, after applying the $W^{2,s}$-regularity result, we obtain enhanced integrability of the right-hand side $\widehat g$.
			To see that $\widehat g\in L^2(\Omega;\R^d)$ can be obtained after finitely many iterations, we consider the function (which occurs in \eqref{eqn:gIntegrability})
			$$
				f(s):=\frac{3ps}{3p+3s-ps}
			$$
			and see that $[f(s)-s]'\geq 0$ for all $s\in[s_0,2]$. Thus the increase of integrability before reaching the value $2$ can be bounded from below by a positive constant
			(provided that $p>3$):
			$$
				f(s)-s=\frac{(p-3)s^2}{3p+3s-ps}\geq f(s_0)-s_0=\frac{(p-3)s_0^2}{3p+3s_0-ps_0}>0
			$$
			for all $s\in[s_0,2]$.
			%$f'(s),f''(s)>0$ for all $s\in(0,\frac{3p}{p-3})$ and $f(s)\to+\infty$ as $s\uparrow\frac{3p}{p-3}$.
			Once we have obtained $\widehat g\in L^2(\Omega;\R^d)$, \cite[Theorem 1.11 (i)]{MH94} yields $u_\tau^k\in H_0^2(\Omega;\R^d)$ as desired.
		\ep
	\end{proof}

	For later use, we define for a sequence of functions $\{h_\tau^k\}_{0\leq k\leq T_\tau}$ the piecewise constant and linear interpolation
	on the time interval $(0,T)$ as
	\begin{align*}
		\OL{h}_\tau(t):=h_\tau^k,\qquad \UL{h}_\tau(t):=h_\tau^{k-1},\qquad h_\tau:=\frac{t-(k-1)\tau}{\tau}h_\tau^k+\frac{k\tau-t}{\tau}h_\tau^{k-1}
	\end{align*}
	for $t\in((k-1)\tau,k\tau]$.
	Given a $t\in[0,T]$, we denote by $\OL t_\tau$ and $\UL t_\tau$ the left- and right-continuous piecewise constant interpolation, i.e.
	\begin{align*}
		&\OL t_\tau:=\tau k\text{ for }\tau(k-1)<t\leq\tau k,\\
		&\UL t_\tau:=\tau (k-1)\text{ for }\tau(k-1)\leq t<\tau k.
	\end{align*}
	
	In what follows, we take for every $\tau>0$ a time-discrete weak solution in the sense of Lemma \ref{lemma:discrSys}
	and adopt the convention above.
	\begin{remark}
		The differential inclusion \eqref{eqn:discrDamageEq} is equivalent to the following variational inequality:
		\begin{align}
			&0\geq-\int_\Omega|\nabla\overline{\chi}_\tau|^{p-2}\nabla\overline{\chi}_\tau\cdot\nabla(\zeta-\overline{\chi}_\tau)\dx\\
			&\qquad-\int_\Omega\bl\partial_t \chi_\tau
				+\gamma(\overline{\chi}_\tau)
				+\frac 12 b'(\overline{\chi}_\tau)\CC\e(\underline{u}_\tau):\e(\underline{u}_\tau)\br(\zeta-\overline{\chi}_\tau)\dx\notag\\
			&\qquad-\int_\Omega\bl-\Theta_M(\underline{w}_\tau)
				-\rho'(\underline{\chi}_\tau)\Theta_M(\overline{w}_\tau)\di(\underline{u}_\tau)\br(\zeta-\overline{\chi}_\tau)\dx
			\label{eqn:damageVIdiscr}
		\end{align}
		holding for all $\zeta\in W^{1,p}(\Omega)$ with $0\leq \zeta\leq \underline{\chi}_\tau$.
	\end{remark}

\subsection{A priori estimates}
	We are going to prove a~priori estimates for the discrete system in Lemma \ref{lemma:discrSys}.
	We will make use of the following implication.
	\begin{lemma}
	\label{label:positivity}
		A time-discrete weak solution constructed in the previous subsection satisfies
		$\overline w_M\geq 0$ and $\underline w_M\geq 0$.
	\end{lemma}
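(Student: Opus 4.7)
The plan is to prove the pointwise nonnegativity $w_\tau^k\ge 0$ for every $k=0,1,\ldots,T_\tau$ by induction on $k$, from which the nonnegativity of both interpolants $\overline w_\tau$ and $\underline w_\tau$ (and of the piecewise-linear one $w_\tau$) follows immediately. The base case is $w_\tau^0=w^0\ge 0$ by hypothesis in Theorem \ref{theorem:existence}.

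For the induction step, assume $w_\tau^{k-1}\ge 0$ a.e.\ in $\Omega$. The natural test function in the discrete heat equation \eqref{eqn:discrHeatEq} is $\zeta=-(w_\tau^k)_-$, where $(w_\tau^k)_-:=\max(-w_\tau^k,0)\in H^1(\Omega)$. Two standard pointwise identities then appear: $-w_\tau^k(w_\tau^k)_-=|(w_\tau^k)_-|^2$ (because the positive and negative parts are supported on disjoint sets) and $\nabla w_\tau^k\cdot\nabla(-(w_\tau^k)_-)=|\nabla(w_\tau^k)_-|^2$. The key analytic input is the convention in Assumption (A2) that $\Theta(w)=0$ for all $w\le 0$, which carries over to the truncation $\Theta_M$; therefore $\Theta_M(w_\tau^k)(w_\tau^k)_-=0$ a.e., so the two terms in \eqref{eqn:discrHeatEq} involving $\Theta_M(w_\tau^k)$ (the coupling with $\mathrm{div}(D_{\tau,k}(u))$ and with $\mathrm{div}(u_\tau^{k-1})D_{\tau,k}(\chi)$) vanish identically after testing.

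What remains of \eqref{eqn:discrHeatEq} is the sum of three nonnegative contributions:
\begin{align*}
0 &= \frac{1}{\tau}\int_\Omega\bigl(|(w_\tau^k)_-|^2+w_\tau^{k-1}(w_\tau^k)_-\bigr)\dx+\int_\Omega K_M(w_\tau^{k-1})|\nabla(w_\tau^k)_-|^2\dx\\
&\quad+\int_\Omega \Theta_M(w_\tau^{k-1})D_{\tau,k}(\chi)\bigl(-(w_\tau^k)_-\bigr)\dx.
\end{align*}
The first integrand is nonnegative because $w_\tau^{k-1}\ge 0$ by the induction hypothesis; the second is nonnegative since $K_M\ge 0$ by (A3); and for the third one uses that $\Theta_M(w_\tau^{k-1})\ge 0$ (induction hypothesis together with monotonicity of $\Theta$ and $\Theta\ge 0$ in (A2)), that $-(w_\tau^k)_-\le 0$, and crucially that $D_{\tau,k}(\chi)\le 0$. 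This last sign is guaranteed by the constraint $\chi_\tau^k\in Z_\tau^{k-1}$ built into the subdifferential inclusion in part (iii) of Lemma \ref{lemma:discrSys}, which forces $\chi_\tau^k\le\chi_\tau^{k-1}$ pointwise.

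Since three nonnegative quantities sum to zero, each of them vanishes; in particular $\int_\Omega|(w_\tau^k)_-|^2\dx=0$, so $(w_\tau^k)_-=0$ a.e., i.e., $w_\tau^k\ge 0$ a.e. The induction closes, and nonnegativity of $\overline w_\tau$ and $\underline w_\tau$ follows from the definition of the piecewise constant interpolants. I do not expect a serious obstacle here; the only subtlety is making sure the two $\Theta_M(w_\tau^k)$--terms actually drop out, which hinges on the convention $\Theta(w)=0$ for $w\le 0$, and making sure the monotonicity $\chi_\tau^k\le\chi_\tau^{k-1}$ is used at the right place to give the correct sign to the $\Theta_M(w_\tau^{k-1})D_{\tau,k}(\chi)$--term.
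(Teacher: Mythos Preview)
Your proof is correct and follows essentially the same approach as the paper: induction on $k$, testing \eqref{eqn:discrHeatEq} with the negative part of $w_\tau^k$, using $\Theta_M(w_\tau^k)(w_\tau^k)_-=0$ to kill the two coupling terms involving $\Theta_M(w_\tau^k)$, and using the signs $\Theta_M(w_\tau^{k-1})\ge 0$, $D_{\tau,k}(\chi)\le 0$, $K_M\ge c_1>0$ for the rest. The paper presents the same computation with the same justifications, so there is nothing to add.
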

	\begin{proof}
		We show this lemma by induction over $k\in\{0,\ldots,T_\tau\}$. Assume that $w_\tau^{k-1}$ fulfills $w_\tau^{k-1}\geq 0$.
		Testing equation \eqref{eqn:discrHeatEq} with $\zeta=-(w_\tau^k)^-:=\min\{w_\tau^k,0\}$ yields
		\begin{align*}
			&\frac 1\tau\int_\Omega\underbrace{-w_\tau^k (w_\tau^k)^-}_{=|(w_\tau^k)^-|^2}\dx+\frac 1\tau\int_\Omega\underbrace{w_\tau^{k-1}(w_\tau^k)^-}_{\geq 0}\dx
				+\int_\Omega \underbrace{K_M(w_\tau^{k-1})\nabla w_\tau^{k}\cdot\nabla (-(w_\tau^k)^-)}_{\geq c_1|\nabla (w_\tau^k)^-|^2\text{ by (A3)}}\dx\\
			&\quad+\int_\Omega\underbrace{\Theta_M(w_\tau^{k-1})}_{\geq 0}\underbrace{D_{\tau,k}(\chi)}_{\leq 0}\underbrace{(-(w_\tau^k)^-)}_{\leq 0}\dx
				+\int_\Omega\rho(\chi_\tau^{k-1})\di\bl D_{\tau,k}(u)\br\underbrace{\Theta_M(w_\tau^{k})(-(w_\tau^k)^-)}_{=0}\dx\\
			&\quad+\int_\Omega\rho'(\chi_\tau^{k-1})\di(u_\tau^{k-1})D_{\tau,k}(\chi)\underbrace{\Theta_M(w_\tau^{k})(-(w_\tau^k)^-)}_{=0}\dx=0.
		\end{align*}
		\ep
	\end{proof}
	\begin{remark}
			The technique used in the proof of Lemma \ref{label:positivity} is not applicable to show $w_\tau^k\geq w^0\geq 0$ provided that
			$g\geq 0$.
			Because, by testing \eqref{eqn:discrHeatEq} with $\zeta=-(w_\tau^k-w^0)^-$ (see \cite[Lemma 3.8]{RR12}),
			we cannot conclude that $\Theta_M(w_\tau^{k})(w_\tau^k-w^0)^-$ is $0$. On the other hand, the technique recently used in \cite{RR14} cannot be applied here because it is strictly related to the presence of the dissipative quadratic terms \eqref{diss} on the right hand side of \eqref{eqn:pde1}.
	\end{remark}

	\begin{lemma}[A~priori estimates independent of $\tau$]
	\label{lemma:aPrioriDiscr}
	The following a~priori estimates hold:
	\begin{align*}
		\textit{(i)}&\textit{ First a priori estimate (uniformly in $\tau$ and $M$):}\hspace*{-9em}\notag\\
		&\quad\{u_\tau\}&&\text{ in }H^1(0,T;H^1(\Omega;\R^d))\cap W^{1,\infty}(0,T;L^2(\Omega;\R^d)),\\
		&\quad\{\overline u_\tau\},\{\underline u_\tau\}&&\text{ in }L^\infty(0,T;H^1(\Omega;\R^d)),\\
		&\quad\{\overline{w}_\tau\},\{\underline{w}_\tau\}&&\text{ in }L^\infty(0,T;L^1(\Omega)),\\
		&\quad\{\chi_\tau\}&&\text{ in }L^\infty(0,T;W^{1,p}(\Omega))\cap H^1(0,T;L^2(\Omega)),\\
		&\quad\{\overline\chi_\tau\},\{\underline\chi_\tau\}&&\text{ in }L^\infty(0,T;W^{1,p}(\Omega)),\\
		\textit{(ii)}&\textit{ Second a priori estimate (uniformly in $\tau$):}\hspace*{-9em}\notag\\
		&\quad\{\nabla\Theta_M(\overline w_\tau)\}&&\text{ in }L^2(0,T;L^2(\Omega)),\\
		\textit{(iii)}&\textit{ Third a priori estimate (uniformly in $\tau$):}\hspace*{-9em}\notag\\
		&\quad\{u_\tau\}&&\text{ in }H^1(0,T;H^{2}(\Omega;\R^d))\cap W^{1,\infty}(0,T;H^1(\Omega;\R^d)),\\
		&\quad\{\overline{u}_\tau\},\{\underline{u}_\tau\}&&\text{ in }L^\infty(0,T;H^{2}(\Omega;\R^d)),\\
		&\quad\{v_\tau\}&&\text{ in }L^2(0,T;H^2(\Omega;\R^d))\cap L^{\infty}(0,T;H^1(\Omega;\R^d))\notag\\
		&&&\quad\;\cap H^1(0,T;L^{2}(\Omega;\R^d)),\\
		\textit{(iv)}&\textit{ Fourth a priori estimate (uniformly in $\tau$):}\hspace*{-9em}\notag\\
		&\quad\{\overline w_\tau\},\{\underline w_\tau\}&&\text{ in }L^2(0,T;H^1(\Omega))\cap L^{\infty}(0,T;L^2(\Omega)),\\
		\textit{(v)}&\textit{ Fifth a priori estimate (uniformly in $\tau$):}\hspace*{-9em}\notag\\
		&\quad\{w_\tau\}&&\text{ in }H^1(0,T;H^1(\Omega)^*).
	\end{align*}
	\end{lemma}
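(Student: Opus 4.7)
The plan is to establish the five bounds in the listed order, since items (ii)--(v) each rely on their predecessors. For the energy estimate (i), I test \eqref{eqn:discrHeatEq} with $\zeta\equiv 1$, \eqref{eqn:discrMomentumEq} with $u_\tau^k-u_\tau^{k-1}$, and \eqref{eqn:discrDamageEq} with $\chi_\tau^k-\chi_\tau^{k-1}$, then sum over $k$. By the design of the semi-implicit scheme, the three coupling contributions $\Theta_M(\underline w_\tau)D_{\tau,k}\chi$, $\rho(\underline\chi_\tau)\Theta_M(\overline w_\tau)\di(D_{\tau,k}u)$ and $\rho'(\underline\chi_\tau)\Theta_M(\overline w_\tau)\di(\underline u_\tau)D_{\tau,k}\chi$ cancel identically across the three equations. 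The convex--concave split $b=b_1+b_2$, combined with the sign constraint $\chi_\tau^k\leq\chi_\tau^{k-1}$, furnishes the one-sided inequality
$$
\tfrac{b_1'(\chi_\tau^k)+b_2'(\chi_\tau^{k-1})}{2}(\chi_\tau^k-\chi_\tau^{k-1})\,\CC\e(u_\tau^{k-1}):\e(u_\tau^{k-1})\;\geq\;\tfrac12\bigl(b(\chi_\tau^k)-b(\chi_\tau^{k-1})\bigr)\CC\e(u_\tau^{k-1}):\e(u_\tau^{k-1}),
$$
which exactly absorbs the residual $\tfrac12(b(\chi_\tau^k)-b(\chi_\tau^{k-1}))\CC\e(u_\tau^{k-1}):\e(u_\tau^{k-1})$ produced by the identity $a\cdot(a-b)=\tfrac12(|a|^2-|b|^2+|a-b|^2)$ applied to the elastic pairing. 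Together with the analogous telescopings for the kinetic, viscous and $p$-Dirichlet contributions, discrete Gronwall closes (i) uniformly in both $\tau$ and $M$; the $L^\infty(L^1)$-bound on $\overline w_\tau$ then follows since $\int_\Omega w\dx$ appears explicitly in the resulting balance, using Lemma~\ref{label:positivity} to drop the absolute value.

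For (ii) I test \eqref{eqn:discrHeatEq} with $\Theta_M(\overline w_\tau)$. Since $\Theta_M$ is nondecreasing with Lipschitz constant independent of $M$ and $K_M\geq c_1$ by (A3), the diffusive contribution satisfies
$$
\int_\Omega K_M(\underline w_\tau)\Theta_M'(\overline w_\tau)|\nabla\overline w_\tau|^2\dx\;\geq\;c\int_\Omega|\nabla\Theta_M(\overline w_\tau)|^2\dx,
$$
the discrete time-derivative term telescopes via the convex primitive $\widehat\Theta_M$, and the remaining cross terms are handled by the (here $M$-dependent) $L^\infty$-bound on $\Theta_M$ against the $L^2$-bounds on $D_{\tau,k}u$ and $D_{\tau,k}\chi$ inherited from (i). Item (iii) mirrors the elliptic bootstrap of Lemma~\ref{lemma:discrSys}, now applied to the discrete difference quotient: I test \eqref{eqn:discrMomentumEq} with $-\di\e(D_{\tau,k}u)$, exploit the structural relation $\DD=\mu\CC$ from (A6) to obtain a telescoping of $\CC\e(\nabla u_\tau^k):\e(\nabla u_\tau^k)$, and control the variable-coefficient commutator $\nabla b(\chi_\tau^k)\cdot\CC\e(u_\tau^k)$ through the embedding $W^{1,p}\hookrightarrow L^\infty$ from (A8) combined with the $L^\infty(W^{1,p})$-bound on $\chi$ already established in (i). The thermoelastic forcing $\di(\rho(\underline\chi_\tau)\Theta_M(\overline w_\tau)\mathbf 1)$ is absorbed thanks to (ii); this is precisely why (ii) must come first.

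Item (iv) is obtained by testing \eqref{eqn:discrHeatEq} with $\overline w_\tau$: the time-derivative term produces the telescoping $\tfrac12(\|\overline w_\tau\|_{L^2}^2-\|\underline w_\tau\|_{L^2}^2)$ up to a nonnegative discrepancy, (A3) together with Lemma~\ref{label:positivity} supplies the diffusive good term $c_1\|\nabla\overline w_\tau\|_{L^2}^2$, and the coupling terms are absorbed by the $L^\infty(L^6)$-bound on $\nabla\underline u_\tau$ from (iii) (using $H^1\hookrightarrow L^6$ in $d\leq 3$) together with the $L^\infty$-bound on $\Theta_M$, after Gagliardo--Nirenberg interpolation of $\overline w_\tau$ between $L^2$ and $H^1$ and a Gronwall step. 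Finally, item (v) is a comparison argument: rearrange \eqref{eqn:discrHeatEq} as $\partial_t w_\tau=F_\tau$ in $H^1(\Omega)^*$ and bound each constituent of $F_\tau$ via items (i)--(iv). I expect (iii) to be the main obstacle: the $H^2$-bootstrap must be executed at the level of the discrete difference quotient while remaining uniform in $\tau$, which requires threading together (A6) for the tensor structure, (A8) for the $W^{1,p}$-to-$L^\infty$ control of the variable-coefficient commutator, and the freshly-obtained bound (ii) to absorb the thermoelastic forcing.
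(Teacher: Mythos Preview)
Your proposal is correct and follows essentially the same route as the paper: the same test functions are used in the same order (constant $1$, $u_\tau^k-u_\tau^{k-1}$, $\chi_\tau^k-\chi_\tau^{k-1}$ for (i); $\Theta_M(\overline w_\tau)$ for (ii); $-\di(\DD\e(D_{\tau,k}u))$ for (iii); $\overline w_\tau$ for (iv); comparison for (v)), with the convex--concave split of $b$ producing the key cancellation in (i) and the bound (ii) feeding into the $H^2$-bootstrap (iii). The only minor deviations are cosmetic---your invocation of the sign constraint $\chi_\tau^k\leq\chi_\tau^{k-1}$ in the convex--concave estimate is unnecessary (the inequality holds by convexity/concavity alone, using only positivity of $\CC$), and where you mention Gagliardo--Nirenberg in (iv) the paper simply uses Young's inequality with a small parameter $\delta$ to absorb $\|\overline w_\tau\|_{H^1}^2$.
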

%	\begin{lemma}[First a priori estimate]
%	\label{lemma:firstAPE}
%		It holds $w_\tau^k\geq 0$ a.e. in $\Omega$ and
%		\begin{align*}
%			&\|w_\tau^k\|_{L^1(\Omega)}\leq C,
%			&&\|D_{\tau,k}(u)\|_{L^2(\Omega)}\leq C,\\
%			&\|\e(u_\tau^k)\|_{L^2(\Omega)}\leq C,
%			&&\|\chi_\tau^k\|_{W^{1,p}(\Omega)}\leq C,\\
%			&\sum_{l=0}^k\tau\|D_{\tau,l}(\e(u))\|_{L^2(\Omega)}\leq C,
%			&&\sum_{l=0}^k\tau\|D_{\tau,l}(\chi)\|_{L^2(\Omega)}\leq C
%		\end{align*}
%		for a constant $C>0$ independent of $\tau$ and $k=0,\ldots,T_\tau$.
%	\end{lemma}
	\begin{proof}[Proof of the first a~priori estimate]
		The first a~priori estimate is based on adding equation \eqref{eqn:system1} tested by $1$
		with equation \eqref{eqn:system2} tested by $\partial_t u$ and with equation \eqref{eqn:system3} tested by $\partial_t \chi$.
		Here, we will develop this estimate on a time-discrete level.
		
		In the following, we make use of a convex-concave estimate for $b_1$ and $b_2$ given by
		\begin{align*}
			b(\chi_\tau^{k-1})-b(\chi_\tau^k)
			&=\big(b_1(\chi_\tau^{k-1})-b_1(\chi_\tau^k)\big)
			+\big(b_2(\chi_\tau^{k-1})-b_2(\chi_\tau^k)\big)\\
			&\geq b_1'(\chi_\tau^{k})(\chi_\tau^{k-1}-\chi_\tau^k)
				+b_2'(\chi_\tau^{k-1})(\chi_\tau^{k-1}-\chi_\tau^k)\\
			&=(b_1'(\chi_\tau^{k})+b_2'(\chi_\tau^{k-1}))(\chi_\tau^{k-1}-\chi_\tau^k).
		\end{align*}
		Testing \eqref{eqn:discrMomentumEq} with $\zeta=u_\tau^k-u_\tau^{k-1}$,
		using the combined convex-concave estimate (note the positivity of $\CC$)
		\begin{align}
			&b(\chi_\tau^k)\CC\e(u_\tau^k):\e(u_\tau^k-u_\tau^{k-1})\notag\\
			&\quad= \frac{b(\chi_\tau^k)}{2}\CC\e(u_\tau^k):\e(u_\tau^k)
				-\frac{b(\chi_\tau^{k-1})}{2}\CC\e(u_\tau^{k-1}):\e(u_\tau^{k-1})\notag\\
			&\qquad+\frac12(b(\chi_\tau^{k-1})-b(\chi_\tau^k))\CC\e(u_\tau^{k-1}):\e(u_\tau^{k-1})
				+\frac{b(\chi_\tau^k)}{2}\CC\e(u_\tau^k-u_\tau^{k-1}):\e(u_\tau^k-u_\tau^{k-1})\notag\\
			&\quad\geq \frac{b(\chi_\tau^k)}{2}\CC\e(u_\tau^k):\e(u_\tau^k)
				-\frac{b(\chi_\tau^{k-1})}{2}\CC\e(u_\tau^{k-1}):\e(u_\tau^{k-1})\notag\\
			&\qquad+\frac12(b(\chi_\tau^{k-1})-b(\chi_\tau^k))\CC\e(u_\tau^{k-1}):\e(u_\tau^{k-1})\notag\\
			&\quad\geq\frac{b(\chi_\tau^k)}{2}\CC\e(u_\tau^k):\e(u_\tau^k)
			-\frac{b(\chi_\tau^{k-1})}{2}\CC\e(u_\tau^{k-1}):\e(u_\tau^{k-1})\notag\\
		\label{eqn:bConvConc}
			&\qquad+\frac12(b_1'(\chi_\tau^{k})+b_2'(\chi_\tau^{k-1}))(\chi_\tau^{k-1}-\chi_\tau^k)\CC\e(u_\tau^{k-1}):\e(u_\tau^{k-1})
		\end{align}
		and
		\begin{align*}
			D_{\tau,k}^2(u)\cdot(u_\tau^k-u_\tau^{k-1})\geq \frac12 |D_{\tau,k}(u)|^2-\frac12 |D_{\tau,k-1}(u)|^2,
		\end{align*}
		yield
		\begin{align}
			&\frac 12\|D_{\tau,k}(u)\|_{L^2}^2-\frac 12\|D_{\tau,k-1}(u)\|_{L^2}^2
				+\int_\Omega\frac{b(\chi_\tau^k)}{2}\CC\e(u_\tau^k):\e(u_\tau^k)\dx\notag\\
		\label{eqn:eq1}
 			&\qquad-\int_\Omega\frac{b(\chi_\tau^{k-1})}{2}\CC\e(u_\tau^{k-1}):\e(u_\tau^{k-1})\dx
 				+\tau\int_\Omega a(\chi_\tau^k)\DD\e(D_{\tau,k}(u)):\e(D_{\tau,k}(u))+R_1\leq 0
		\end{align}
		with the remainder term
		\begin{align*}
			R_1:={}&\int_\Omega\frac{b_1'(\chi_\tau^{k})
				+b_2'(\chi_\tau^{k-1})}{2}\CC\e(u_\tau^{k-1}):\e(u_\tau^{k-1})(\chi_\tau^{k-1}-\chi_\tau^k)\dx\\
			&-\int_\Omega\rho(\chi_\tau^{k-1})\Theta_M(w_\tau^{k})\di\bl u_\tau^k-u_\tau^{k-1}\br\dx.
		\end{align*}
		Testing \eqref{eqn:discrDamageEq} with $\chi_\tau^{k-1}-\chi_\tau^{k}$ and using the convexity estimate
		\begin{align*}
			&|\nabla\chi_\tau^k|^{p-2}\nabla\chi_\tau^k\cdot\nabla(\chi_\tau^k-\chi_\tau^{k-1})
				\geq \int_\Omega\frac1p|\nabla\chi_\tau^k|^p\dx-\int_\Omega\frac1p|\nabla\chi_\tau^{k-1}|^p\dx
		\end{align*}
		yield
		\begin{align}
		\label{eqn:eq2}
			\tau\int_\Omega|D_{\tau,k}(\chi)|^2\dx+\int_\Omega\frac1p|\nabla\chi_\tau^k|^p\dx-\int_\Omega\frac1p|\nabla\chi_\tau^{k-1}|^p\dx+R_2\leq 0
		\end{align}
		with the remainder term
		\begin{align*}
			R_2:={}&\int_\Omega \gamma(\chi_\tau^k)(\chi_\tau^k-\chi_\tau^{k-1})\dx
				+\int_\Omega\frac{b_1'(\chi_\tau^{k})
				+b_2'(\chi_\tau^{k-1})}{2}\CC\e(u_\tau^{k-1}):\e(u_\tau^{k-1})(\chi_\tau^k-\chi_\tau^{k-1})\dx\\
			&-\int_\Omega\Theta_M(w_\tau^{k-1})(\chi_\tau^k-\chi_\tau^{k-1})\dx
				-\int_\Omega\rho'(\chi_\tau^{k-1})\Theta_M(w_\tau^{k})\di(u_\tau^{k-1})(\chi_\tau^k-\chi_\tau^{k-1})\dx.
		\end{align*}
		Testing \eqref{eqn:discrHeatEq} with $\tau$ shows
		\begin{align}
		\label{eqn:eq3}
			\int_\Omega\bl w_\tau^k-w_\tau^{k-1}\br\dx+R_3\leq 0
		\end{align}
		with the remainder term
		\begin{align*}
			R_3:={}&\int_\Omega\Theta_M(w_\tau^{k-1})(\chi_\tau^k-\chi_\tau^{k-1})\dx+\int_\Omega\rho(\chi_\tau^{k-1})\Theta_M(w_\tau^k)\di\bl u_\tau^k-u_\tau^{k-1}\br\dx\\
			&+\int_\Omega\rho'(\chi_\tau^{k-1})\Theta_M(w_\tau^k)\di\bl u_\tau^{k-1}\br(\chi_\tau^k-\chi_\tau^{k-1})\dx.
		\end{align*}
		By adding \eqref{eqn:eq1}-\eqref{eqn:eq3}, noticing the crucial property
		$$
			R_1+R_2+R_3=\int_\Omega \gamma(\chi_\tau^k)(\chi_\tau^k-\chi_\tau^{k-1})\dx
		$$
		and summing over $k=1,\ldots,\OL t_\tau/\tau$, we obtain
		\begin{align}
			&\int_\Omega \OL{w}_\tau(t)\dx+\frac 12\|\partial_t u_\tau(t)\|_{L^2(\Omega)}^2
				+c\|\e(\OL u_\tau(t))\|_{L^2(\Omega;\R^{d\times d})}^2+\frac1p\|\nabla\OL\chi_\tau(t)\|_{L^p(\Omega)}^p\notag\\
			&+\int_0^{\OL t_\tau}\Big(c\|\e(\partial_t u_\tau(s))\|_{L^2(\Omega;\R^{d\times d})}^2+\|\partial_t\chi_\tau(s)\|_{L^2(\Omega)}^2\Big)\ds\notag\\
 			&\qquad\leq\int_\Omega w^0\dx+\frac 12\|v^0\|_{L^2(\Omega)}^2+\int_\Omega\frac{b(\chi^0)}{2}\CC\e(u^0):\e(u^0)\dx
				+\frac1p\|\nabla\chi^0\|_{L^p(\Omega)}^p\notag\\
		\label{eqn:aPrioriEst1}
			&\qquad\quad+\int_0^{\OL t_\tau}\int_\Omega -\gamma(\OL\chi_\tau)\partial_t\chi_\tau\dxs.
		\end{align}
		The last term on the right-hand side can be estimated from above as follows:
		\begin{align*}
			\int_0^{\OL t_\tau}\int_\Omega -\gamma(\OL\chi_\tau)\partial_t\chi_\tau\dxs
 			\leq C\int_0^{\OL t_\tau}\|\partial_t\chi(s)\|_{L^2(\Omega)}\ds.
		\end{align*}
		and, therefore, absorbed by the left-hand side.
		Hence the left-hand side of \eqref{eqn:aPrioriEst1}
		is bounded with respect to $\tau$ and $t$.
		\ep
	\end{proof}
%	\begin{lemma}[Second a priori estimate]
%	\label{lemma:second1}
%		It holds
%		$$
%			\sum_{k=0}^l\tau\int_\Omega \nabla w_\tau^{k}\cdot\nabla\bl\Theta_M(w_\tau^k)\br<C.
%		$$
%		for a constant $C>0$ independent of $\tau$ and $l=0,\ldots,T_\tau$.
%	\end{lemma}
	\\
	\begin{proof}[Proof of the second a~priori estimate]
		By testing \eqref{eqn:discrHeatEq} with $\tau \Theta_M(w_\tau^k)$ and using the convexity estimate
		$$
			\Theta_M(w_\tau^k)(w_\tau^k-w_\tau^{k-1})\geq \widehat\Theta_M(w_\tau^k)-\widehat\Theta_M(w_\tau^{k-1}),
		$$
		where $\widehat\Theta_M$ denotes the antiderivative of $\Theta_M$ with $\widehat\Theta_M(0)=0$ (note that $\widehat\Theta_M$ is convex
		due to $\Theta_M'\geq 0$),
		we obtain
		\begin{align*}
			&\int_\Omega\widehat\Theta_M(w_\tau^k)\dx-\int_\Omega\widehat\Theta_M(w_\tau^{k-1})\dx
				+\tau\int_\Omega K_M(w_\tau^{k-1})\nabla w_\tau^{k}\cdot\nabla\bl\Theta_M(w_\tau^k)\br\dx\\
			&\quad\leq-\tau\int_\Omega\Theta_M(w_\tau^{k-1})D_{\tau,k}(\chi)\Theta_M(w_\tau^k)\dx
				-\tau\int_\Omega\rho(\chi_\tau^{k-1})\Theta_M(w_\tau^{k})\di\bl D_{\tau,k}(u)\br\Theta_M(w_\tau^k)\dx\notag\\
			&\quad\quad-\tau\int_\Omega\rho'(\chi_\tau^{k-1})\Theta_M(w_\tau^{k})\di(u_\tau^{k-1})D_{\tau,k}(\chi)\Theta_M(w_\tau^k)\dx\\
			&\quad\leq\tau\|\Theta_M(w_\tau^{k-1})\|_{L^\infty(\Omega)}\|D_{\tau,k}(\chi)\|_{L^2(\Omega)}\|\Theta_M(w_\tau^k)\|_{L^{\infty}(\Omega)}\\
			&\quad\quad+\tau\|\rho(\chi_\tau^{k-1})\|_{L^\infty(\Omega)}\|\Theta_M(w_\tau^{k})\|_{L^\infty(\Omega)}
				\|\di\bl D_{\tau,k}(u)\br\|_{L^2(\Omega)}\|\Theta_M(w_\tau^k)\|_{L^\infty(\Omega)}\\
			&\quad\quad+\tau\|\rho'(\chi_\tau^{k-1})\|_{L^\infty(\Omega)}\|\Theta_M(w_\tau^{k})\|_{L^\infty(\Omega)}\|\di(u_\tau^{k-1})\|_{L^2(\Omega)}\|D_{\tau,k}(\chi)\|_{L^2(\Omega)}
				\|\Theta_M(w_\tau^k)\|_{L^\infty(\Omega)}.
		\end{align*}
		By summing over the time index $k=1,\ldots,T_\tau$, we end up with the estimate
		\begin{align*}
			&\int_\Omega\widehat\Theta_M(\OL w_\tau(T))\dx
				+\int_{\Omega_T} K_M(\UL w_\tau)\nabla \OL w_\tau\cdot\nabla\bl\Theta_M(\OL w_\tau)\br\dxt\\
			&\quad\leq
			\int_\Omega\widehat\Theta_M(w^0)\dx
			+\|\Theta_M(\UL w_\tau)\|_{L^\infty(L^\infty)}\|\partial_t\chi_\tau\|_{L^2(L^2)}\|\Theta_M(\OL w_\tau)\|_{L^2(L^{\infty})}\\
			&\quad\quad+\|\rho(\UL\chi_\tau)\|_{L^\infty(L^\infty)}\|\Theta_M(\OL w_\tau)\|_{L^\infty(L^\infty)}
				\|\di\bl \partial_t u_\tau\br\|_{L^2(L^2)}\|\Theta_M(\OL w_\tau)\|_{L^2(L^\infty)}\\
			&\quad\quad+\|\rho'(\UL\chi_\tau)\|_{L^\infty(L^\infty)}\|\Theta_M(\OL w_\tau)\|_{L^\infty(L^\infty)}\|\di(\UL u_\tau)\|_{L^2(L^2)}\|\partial_t\chi_\tau\|_{L^2(L^2)}
				\|\Theta_M(\OL w_\tau)\|_{L^\infty(L^\infty)}.
		\end{align*}
		Together with the first a~priori estimate,
		the estimate $\Theta_M(\OL w_\tau)\leq \Theta(M)$ (holding uniformly in $\tau$)
		and the Assumptions (A2) and (A3), we obtain boundedness of
		\begin{align*}
			c_1\int_{\Omega_T}|\nabla \OL w_\tau|^2\Theta_M'(\OL w_\tau)\dxt
			&\leq\int_{\Omega_T} K_M(\UL w_\tau)|\nabla \OL w_\tau|^2\Theta_M'(\OL w_\tau)\dxt\\
			&= \int_{\Omega_T} K_M(\UL w_\tau)\nabla \OL w_\tau\cdot\nabla\bl\Theta_M(\OL w_\tau)\br\dxt.
		\end{align*}
		Since $\Theta_M'(w_\tau)$ is also bounded in $L^\infty(0,T;L^\infty(\Omega))$ by the Lipschitz continuity of $\Theta$ (see Assumption (A2)), we obtain the claim as follows:
		\begin{align*}
			\|\nabla\Theta_M(\overline w_\tau)\|_{L^2(L^2)}^2
			&= \int_{\Omega_T} |\nabla \OL w_\tau|^2(\Theta_M'(\OL w_\tau))^2\dxt\\
			&\leq \|\Theta_M'(w_\tau)\|_{L^\infty(L^\infty)}\int_{\Omega_T} |\nabla \OL w_\tau|^2\Theta_M'(\OL w_\tau)\dxt.
		\end{align*}
%		see that the r.h.s. is bounded due to the first a priori estimate.
		\ep
	\end{proof}
%	\begin{lemma}[Third a priori estimate]
%	\label{lemma:thirdAPE}
%		It holds
%		\begin{align*}
%			&\sum_{k=0}^l\tau\|(D_{\tau,k}(u)\|_{H^2(\Omega;\R^d)}\leq C,
%			&&\|D_{\tau,k}(u)\|_{H^1(\Omega;\R^d)}\leq C,\\
%			&\sum_{k=0}^l\tau\|D_{\tau,k}(D_{\tau,k}(u))\|_{L^2(\Omega;\R^d)}\leq C
%		\end{align*}
%		for a constant $C>0$ independent of $\tau$.
%	\end{lemma}
	\\
	\begin{proof}[Proof of the third a~priori estimate]
		We test equation \eqref{eqn:momentumEq} with $-\tau\di(\DD\e(D_{\tau,k}(u)))$ and
		sum over $k=1,\ldots,\OL t_\tau$ for a chosen $t\in[0,T]$.
		The corresponding calculations without the term
		$$
			\int_0^{\OL t_\tau}\int_\Omega\di\bl\rho(\OL\chi_\tau)\Theta_M(\OL w_\tau)\mathbf 1\br\cdot\di(\DD\e(\partial_t u_\tau))\dxs
		$$
		are carried out in  \cite[Proposition 3.10]{RR12} (see also \cite[Fifth Estimate, p. 18]{RR14}).
		The calculations there take advantage of Assumption (A8), i.e. $p>d$.
		
		In our case, we have to estimate the additional term
		where the $\chi$-dependence of $\rho$ comes into play
		\begin{align*}
			&\left|\int_0^{\OL t_\tau}\int_{\Omega}\di\big(\rho(\OL\chi_\tau)\Theta_M(\OL w_\tau)\mathbf 1\big)\cdot\di(\DD\e(\partial_t u_\tau))\dxs\right|\\
			&\qquad\leq
				\int_0^{\OL t_\tau}\int_{\Omega}\left|\Theta_M(\OL w_\tau)\rho'(\OL\chi_\tau)\nabla\OL\chi_\tau\cdot\di\bl\DD\e\bl\partial_t u_\tau\br\br\right|\dxs\\
			&\qquad\quad+\int_0^{\OL t_\tau}\int_{\Omega}\left|\rho(\OL\chi_\tau)\nabla\big(\Theta_M(\OL w_\tau)\big)\cdot\di\bl\DD\e\bl \partial_t u_\tau\br\br\right|\dxs\\
			&\qquad\leq C\|\Theta_M(\OL w_\tau)\|_{L^\infty(L^\infty)}\|\rho'(\OL\chi_\tau)\|_{L^\infty(L^\infty)}
				\|\nabla\OL\chi_\tau\|_{L^2(L^2)}\left\|\partial_t u_\tau\right\|_{L^2(H^2)}\\
			&\qquad\quad+C\|\rho(\OL\chi_\tau)\|_{L^\infty(L^\infty)}\|\nabla\big(\Theta_M(\OL w_\tau)\big)\|_{L^2(L^2)}
				\left\|\partial_t u_\tau\right\|_{L^2(H^2)}.
		\end{align*}
		
		By using the first and the second a~priori estimates,
		the estimate $\Theta_M(\OL w_\tau)\leq \Theta(M)$ (holding uniformly in $\tau$)
		and the regularity estimate for linear elasticity (cf. \cite[Lemma 3.2]{Necas67})
		\begin{align*}
			\|u\|_{H^2}\leq C\|\di(\DD\e(u))\|_{L^2}\qquad\text{for all } u\in H_0^2(\Omega;\R^d)
		\end{align*}
		as well as the calculations in \cite[Proposition 3.10]{RR12} (see also \cite[Fifth Estimate, p. 18]{RR14}), we obtain eventually for small $\delta>0$:
		\begin{align*}
			&\frac12\|\e(\partial_t u_\tau(t))\|_{L^2(\Omega;\R^{d\times d})}^2+\delta\|\partial_t u_\tau\|_{L^2(0,\OL t_\tau;H^2(\Omega;\R^d))}^2\\
			&\qquad\leq \frac12\|\e(v^0)\|_{L^2(\Omega;\R^{d\times d})}^2+C\int_0^{\OL t_\tau}\|\partial_t u_\tau\|_{L^2(0,\OL s_\tau;H^2(\Omega;\R^d))}^2\ds
				+C\left\|\partial_t u_\tau\right\|_{L^2(0,\OL t_\tau;H^2(\Omega;\R^d))}.
		\end{align*}
		Gronwall's lemma leads to the claim.
		\ep
	\end{proof}
%	\begin{lemma}[Fourth a priori estimate]
%		It holds
%		\begin{align*}
%			&\|w_\tau^l\|_{L^2(\Omega)}\leq C,
%			&&\sum_{k=0}^l\tau\|\nabla w_\tau^k\|_{L^2(\Omega)}\leq C
%		\end{align*}
%		for a constant $C>0$ independent of $\tau$ and $l=0,\ldots,T_\tau$.
%	\end{lemma}
	\\
	\begin{proof}[Proof of the fourth a~priori estimate]
		Testing \eqref{eqn:discrHeatEq} with $\tau w_\tau^k$ and using standard convexity estimates as well as Assumption (A3) yield
		\begin{align*}
			&\frac 12\|w_\tau^k\|_{L^2(\Omega)}^2-\frac 12\|w_\tau^{k-1}\|_{L^2(\Omega)}^2+c_1\tau\|\nabla w_\tau^k\|_{L^2(\Omega;\R^d)}^2\\
			&\qquad\leq
				\tau\|\Theta_M(w_\tau^{k-1})w_\tau^k D_{\tau,k}(\chi)\|_{L^1(\Omega)}
				+C\tau\|\rho(\chi_\tau^{k-1})\Theta_M(w_\tau^k)\|_{L^\infty(\Omega)}\|\di\big(D_{\tau,k}(u)\big)w_\tau^k\|_{L^1(\Omega)}\\
			&\qquad\quad+C\tau\|\rho'(\chi_\tau^{k-1})\Theta_M(w_\tau^k)\|_{L^\infty(\Omega)}\|D_{\tau,k}(\chi)\di (u_\tau^{k-1})w_\tau^k\|_{L^1(\Omega)}\leq 0.
		\end{align*}
%		\textcolor{red}{To estimate the last term, we need the growth condition $\Theta_M(x)x\leq C$ for all $x\geq 0$.}
		
		Summing over the discrete time index $k=1,\ldots,\OL t_\tau/\tau$, using the continuous embedding $H^1(\Omega)\hookrightarrow L^6(\Omega)$
		and standard estimates, we receive
		\begin{align*}
			&\frac 12\|\OL w_\tau(t)\|_{L^2(\Omega)}^2+c_1\|\nabla \OL w_\tau\|_{L^2(0,\OL t_\tau,L^2(\Omega;\R^d))}^2\\
			&\quad\leq 
				\frac 12\|w^0\|_{L^2(\Omega)}^2+C\|\Theta_M(\UL w_\tau)\|_{L^\infty(L^\infty)}\left(\int_0^{\OL t_\tau}\|\OL w_\tau(s)\|_{L^2(\Omega)}^2\ds
				+\|\partial_t\chi_\tau\|_{L^2(L^2)}^2\right)\\
			&\quad\quad
				+C\|\rho(\UL\chi_\tau)\|_{L^\infty(L^\infty)}\|\Theta_M(\OL w_\tau)\|_{L^\infty(L^\infty)}
				\left(\int_0^{\OL t_\tau}\|\OL w_\tau(s)\|_{L^2(\Omega)}^2\ds
				+\|\di\big(\partial_t u_\tau\big)\|_{L^2(L^2)}^2\right)\\
			&\quad\quad
				+\|\rho'(\UL\chi_\tau)\|_{L^\infty(L^\infty)}\|\Theta_M(\OL w_\tau)\|_{L^\infty(L^\infty)}\times\\
			&\qquad\qquad\times
				\left(\delta\int_0^{\OL t_\tau}\|\OL w_\tau(s)\|_{H^1(\Omega)}^2\ds
				+C_\delta\|\partial_t \chi_\tau\|_{L^2(L^2)}^2\|\di(\UL u_\tau)\|_{L^\infty(L^3)}^2\right).
		\end{align*}
		Chosing $\delta>0$ sufficiently small, applying the first and the third a~priori estimates
		as well as the estimate $\Theta_M(\OL w_\tau)\leq \Theta(M)$,
		we obtain by Gronwall's inequality boundedness of the left-hand side and, therefore, the claim.
		\ep
	\end{proof}\\
	\begin{proof}[Proof of the fifth a~priori estimate]
		A comparison argument in equation \eqref{eqn:discrHeatEq}
		shows the assertion.\ep
	\end{proof}
	
	\subsection{The passage $\tau\downarrow 0$}
	\label{section:tauToZero}
	By utilizing Lemma \ref{lemma:aPrioriDiscr} and by noticing $\partial_t u_\tau=\overline{v}_\tau$ (see \eqref{eqn:vDef}), we obtain
	by standard compactness and Aubin-Lions type theorems (see \cite{Simon}) the following convergence properties.
	\begin{corollary}
	\label{cor:weakConvDiscr}
		We obtain functions $(u,w,\chi)$ which are in the spaces
		\begin{align*}
			&u\in H^1(0,T;H_0^2(\Omega;\R^d))\cap W^{1,\infty}(0,T;H_0^1(\Omega;\R^d))\cap H^{2}(0,T;L^2(\Omega;\R^d))\\
			&\quad \text{ with }u(0)=u^0\text{ a.e. in }\Omega,\;\partial_t u(0)=v^0\text{ a.e. in }\Omega,\\
			&w\in L^2(0,T;H^1(\Omega))\cap L^\infty(0,T;L^2(\Omega))\cap H^1(0,T;H^1(\Omega)^*)\\
			&\quad \text{ with }w(0)=w^0\text{ a.e. in }\Omega,\;w\geq 0\text{ a.e. in }\Omega_T,\\
			&\chi\in L^\infty(0,T;W^{1,p}(\Omega))\cap H^1(0,T;L^2(\Omega))\\
			&\quad \text{ with }\chi(0)=\chi^0\text{ a.e. in }\Omega,\;\chi\geq 0\text{ a.e. in }\Omega_T,\;\partial_t\chi\leq 0\text{ a.e. in }\Omega_T
%			&\xi\in L^1(0,T;L^1(\Omega))
		\end{align*}
		such that (along a subsequence) for all $\varepsilon\in(0,1]$, $\mu\geq 1$:
		\begin{align*}
			\textit{(i) }&u_\tau\to u&&\textit{ weakly-star in }H^1(0,T;H^2(\Omega;\R^d))\cap W^{1,\infty}(0,T;H^1(\Omega;\R^d)),\\
			&\overline{u}_\tau,\underline{u}_\tau\to u&&\textit{ weakly-star in }L^\infty(0,T;H^2(\Omega;\R^d)),\\
			&u_\tau\to u&&\textit{ strongly in }H^1(0,T;H^{2-\varepsilon}(\Omega;\R^d)),\\
			&\overline{u}_\tau,\underline{u}_\tau\to u&&\textit{ strongly in }L^\infty(0,T;H^{2-\varepsilon}(\Omega;\R^d)),\\
			&u_\tau, \overline{u}_\tau,\underline{u}_\tau\to u&&\textit{ a.e. in }\Omega_T,\\
			\textit{(ii) }&v_\tau\to \partial_t u&&\textit{ weakly-star in }H^1(0,T;L^2(\Omega;\R^d)),\\
			&v_\tau\to \partial_t u&&\textit{ strongly in }L^2(0,T;H^{2-\varepsilon}(\Omega;\R^d)),\\
			\textit{(iii) }&w_\tau\to w&&\textit{ weakly-star in }L^2(0,T;H^1(\Omega))\cap L^\infty(0,T;L^2(\Omega))\\
			&&&\hspace*{6.6em}\cap H^1(0,T;H^1(\Omega)^*),\\
				&\overline{w}_\tau,\underline{w}_\tau\to w&&\textit{ weakly-star in }L^2(0,T;H^1(\Omega))\cap L^\infty(0,T;L^2(\Omega)),\\
				&w_\tau,\overline{w}_\tau,\underline{w}_\tau\to w&&\textit{ strongly in }L^2(0,T;H^{1-\varepsilon}(\Omega))\cap L^\mu(0,T;L^2(\Omega)),\\
				&w_\tau,\overline{w}_\tau,\underline{w}_\tau\to w&&\textit{ a.e. in }\Omega_T,\\
			\textit{(iv) }&\chi_\tau\to\chi&&\textit{ weakly-star in }L^\infty(0,T;W^{1,p}(\Omega))\cap H^1(0,T;L^2(\Omega)),\\
				&\overline{\chi}_\tau,\underline{\chi}_\tau\to\chi&&\textit{ weakly-star in }L^\infty(0,T;W^{1,p}(\Omega)),\\
				&\chi_\tau,\overline{\chi}_\tau,\underline{\chi}_\tau\to\chi&&\textit{ strongly in }L^\mu(0,T;W^{1-\varepsilon,p}(\Omega)),\\
				&\overline{\chi}_\tau,\underline{\chi}_\tau\to\chi&&\textit{ uniformly on }\OL{\Omega_T}.
		\end{align*}
	\end{corollary}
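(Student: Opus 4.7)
The plan is to apply Banach-Alaoglu to the uniform bounds of Lemma \ref{lemma:aPrioriDiscr} and then upgrade weak convergence to strong convergence via Aubin-Lions / Simon compactness (cf.\ \cite{Simon}), while simultaneously reconciling the three interpolations $h_\tau$, $\OL h_\tau$, $\UL h_\tau$.

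First, I would extract weakly-star convergent subsequences from each uniform bound. The third a priori estimate produces $u_\tau \weakstarlim u$ in $H^1(0,T;H_0^2(\Omega;\R^d)) \cap W^{1,\infty}(0,T;H_0^1(\Omega;\R^d))$ and, noting $\partial_t u_\tau = \OL v_\tau$ by \eqref{eqn:vDef}, $v_\tau \weakstarlim \partial_t u$ in $H^1(0,T;L^2)$; the fourth and fifth estimates give the weak-star limit of $w_\tau$ in $L^2(0,T;H^1) \cap L^\infty(0,T;L^2) \cap H^1(0,T;H^1(\Omega)^*)$, and the first estimate gives the weak-star limit of $\chi_\tau$ in $L^\infty(0,T;W^{1,p}) \cap H^1(0,T;L^2)$. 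That the piecewise constant interpolants $\OL h_\tau, \UL h_\tau$ share the same limit as $h_\tau$ follows from the elementary bound $\|h_\tau - \OL h_\tau\|_{L^q(0,T;X)} \leq \tau \|\partial_t h_\tau\|_{L^q(0,T;X)}$, which vanishes as $\tau\downarrow 0$ for the appropriate $X$.

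Next, I would invoke Aubin-Lions to promote these weak convergences to strong ones in slightly weaker spaces. For $u_\tau$, combining the $L^\infty(0,T;H^2)$ bound with the $W^{1,\infty}(0,T;H^1)$ bound and the compact embedding $H^2 \compact H^{2-\varepsilon}$ yields $\OL u_\tau, \UL u_\tau \to u$ strongly in $L^\infty(0,T;H^{2-\varepsilon})$ (and in $H^1(0,T;H^{2-\varepsilon})$ for $u_\tau$ itself, using the $H^1$-in-time bound); the analogue for $v_\tau$ in $L^2(0,T;H^2) \cap H^1(0,T;L^2)$ gives strong convergence in $L^2(0,T;H^{2-\varepsilon})$. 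For $w_\tau$, the bounds in $L^2(0,T;H^1)$ and $H^1(0,T;H^1(\Omega)^*)$ combined with $H^1 \compact H^{1-\varepsilon}$ yield strong convergence in $L^2(0,T;H^{1-\varepsilon})$, which interpolates with the $L^\infty(0,T;L^2)$ bound to give convergence in $L^\mu(0,T;L^2)$ for every $\mu \geq 1$. For $\chi_\tau$, the bounds in $L^\infty(0,T;W^{1,p})$ and $H^1(0,T;L^2)$ together with $W^{1,p} \compact W^{1-\varepsilon,p}$ give strong convergence in $L^\mu(0,T;W^{1-\varepsilon,p})$; moreover, since $p > d$ by Assumption (A8), the compact embedding $W^{1,p} \compact C(\OL\Omega)$ combined with the equicontinuity in time inherited from the $H^1(0,T;L^2)$ bound (interpolated against the $W^{1,p}$ bound) yields uniform convergence of $\OL\chi_\tau, \UL\chi_\tau$ on $\OL{\Omega_T}$.

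Finally, a further subsequence gives a.e.\ convergence in $\Omega_T$ from the strong $L^2$ convergences; the initial conditions are inherited because $u_\tau(0)=u^0$, $v_\tau(0)=v^0$, $w_\tau(0)=w^0$, $\chi_\tau(0)=\chi^0$ by construction and the $H^1$-in-time regularity of the limits makes the traces at $t=0$ well-defined; the positivity $w\geq 0$ follows from Lemma \ref{label:positivity} combined with a.e.\ convergence, $\chi\geq 0$ likewise by a.e.\ convergence (or by uniform convergence of $\OL\chi_\tau$), and $\partial_t\chi\leq 0$ passes by weak closedness of the convex cone $\{f\leq 0\}\subset L^2(\Omega_T)$. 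A standard diagonal extraction over a countable set of parameters $\varepsilon \in (0,1]$, $\mu \geq 1$ produces a single subsequence along which all listed convergences hold simultaneously. The principal bookkeeping obstacle is coordinating the three interpolants at once and exploiting $p>d$ exactly where it is needed to upgrade $\OL\chi_\tau, \UL\chi_\tau$ to uniform convergence on $\OL{\Omega_T}$; this uniform convergence is indispensable for handling the nonlinear products $b(\chi), \rho(\chi), \gamma(\chi)$ via dominated convergence in Subsection \ref{section:tauToZero}.
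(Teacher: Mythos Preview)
Your proposal is correct and follows essentially the same route as the paper: Banach--Alaoglu on the bounds of Lemma~\ref{lemma:aPrioriDiscr}, then Aubin--Lions/Simon compactness via the compact embeddings $H^2\hookrightarrow H^{2-\varepsilon}$, $H^1\hookrightarrow H^{1-\varepsilon}$, $W^{1,p}\hookrightarrow W^{1-\varepsilon,p}$ and $W^{1,p}\hookrightarrow C(\OL\Omega)$ (the latter exploiting $p>d$), together with the identity $\partial_t u_\tau=\OL v_\tau$ and the standard estimate $\|h_\tau-\OL h_\tau\|\leq\tau\|\partial_t h_\tau\|$ to reconcile the interpolants. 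The only cosmetic difference is that the paper obtains the $L^\mu(0,T;L^2)$ convergence of $w_\tau$ via Lebesgue's dominated convergence (pointwise-in-$t$ convergence of $\|w_\tau(t)-w(t)\|_{L^2}$ dominated by the uniform $L^\infty(L^2)$ bound) rather than by interpolation, but both arguments are standard and equivalent here.
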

	\textbf{Comments on the proof of Corollary \ref{cor:weakConvDiscr}.}\\
		We would like to make some comments about the strong convergences:
		
		Recalling the boundedness of $v_\tau$ in $H^1(0,T;L^2(\Omega;\R^d))\cap L^2(0,T;H^2(\Omega;\R^d))$ from Lemma \ref{lemma:aPrioriDiscr}
		and the compact embedding $H^{2}(\Omega;\R^d)\hookrightarrow H^{2-\varepsilon}(\Omega;\R^d)$
		(where $H^{2-\varepsilon}$ denotes the Sobolev-Slobodeckij space of fractional order $2-\varepsilon$, cf. \cite[p. 18]{Rou13}), we obtain from
		Aubin-Lions' theorem
		\begin{align}
		\label{eqn:vConvergence}
			&v_\tau\to v&&\text{ strongly in }L^2(0,T;H^{2-\varepsilon}(\Omega;\R^d))
		\end{align}
		as $\tau\downarrow 0$ for a subsequence.
		Similiarly we obtain
		\begin{subequations}
		\begin{align}
		\label{eqn:uConvergence}
			&u_\tau\to u&&\text{ strongly in }L^2(0,T;H^{2-\varepsilon}(\Omega;\R^d)),\\
		\label{eqn:wConvergence}
			&w_\tau\to w&&\text{ strongly in }L^2(0,T;H^{1-\varepsilon}(\Omega)),\\
		\label{eqn:chiConvergence}
			&\chi_\tau\to \chi&&\text{ strongly in }L^\mu(0,T;W^{1-\varepsilon,p}(\Omega)).
		\end{align}
		\end{subequations}
		These features also imply the corresponding convergence properties for $\OL u_\tau$, $\UL u_\tau$, $\OL v_\tau$, $\UL v_\tau$,
		$\OL w_\tau$, $\UL w_\tau$, $\OL \chi_\tau$ and $\UL \chi_\tau$.
		Noticing \eqref{eqn:vConvergence}, \eqref{eqn:uConvergence} and $\partial_t u_\tau=\OL v_\tau$ show
		\begin{align*}
			&u_\tau\to u&&\text{ strongly in }H^1(0,T;H^{2-\varepsilon}(\Omega;\R^d)).
		\end{align*}
		Resorting to a further suitable subsequence \eqref{eqn:wConvergence} implies
		$\|w_\tau(t)-w(t)\|_{L^2}\to 0$ as $\tau\downarrow 0$ for a.e. $t\in(0,T)$.
		By using $\|w_\tau(t)-w(t)\|_{L^2}^\mu\leq \|w_\tau-w\|_{L^\infty(L^2)}^\mu\leq C$, Lebesgue's convergence theorem yields
		\begin{align*}
			&w_\tau\to w&&\text{ strongly in }L^\mu(0,T;L^2(\Omega)).\qquad\quad\;
		\end{align*}
		A further application of a Aubin-Lions type theorem is
		\begin{align*}
			&\chi_\tau\to\chi&&\text{ uniformly on }\OL{\Omega_T}\qquad\qquad\qquad\quad\;\;\;
		\end{align*}
		as $\tau\downarrow 0$ for a subsequence
		which follows from boundedness of $\chi_\tau$ in $L^\infty(0,T;W^{1,p}(\Omega))\cap H^1(0,T;L^2(\Omega))$
		and the compact embedding $W^{1,p}(\Omega)\hookrightarrow C(\OL{\Omega_T})$ valid for $p>d$.
		\ep
	\begin{lemma}
	\label{lemma:strongZConv}
		It even holds (along a subsequence as $\tau\downarrow 0$)
		$$
			\overline{\chi}_\tau\to\chi\text{ strongly in }L^p(0,T;W^{1,p}(\Omega)).
		$$
	\end{lemma}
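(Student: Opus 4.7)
The strategy is to combine uniform convexity of $L^p$ with a Minty--Browder monotonicity argument for the $p$-Laplacian. Corollary~\ref{cor:weakConvDiscr} already provides $\OL\chi_\tau\to\chi$ uniformly on $\OL{\Omega_T}$ and $\nabla\OL\chi_\tau\weakstarlim\nabla\chi$ in $L^\infty(0,T;L^p(\Omega))$; in particular $\nabla\OL\chi_\tau\weaklim\nabla\chi$ in $L^p(\Omega_T;\R^d)$. Hence the claim reduces to showing $\nabla\OL\chi_\tau\to\nabla\chi$ strongly in $L^p(\Omega_T;\R^d)$, which by weak lower semicontinuity and uniform convexity of $L^p$ boils down to the norm bound
\begin{align*}
\limsup_{\tau\downarrow 0}\int_{\Omega_T}|\nabla\OL\chi_\tau|^p\dxt\leq\int_{\Omega_T}|\nabla\chi|^p\dxt.
\end{align*}

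\textbf{Realization of the limsup bound.} I test the discrete one-sided variational inequality \eqref{eqn:damageVIdiscr} with $\zeta_\tau:=\min(\chi,\UL\chi_\tau)$, which is admissible at each time since $\chi(t),\UL\chi_\tau(t)\in W^{1,p}(\Omega)$, $\chi\geq 0$ by Corollary~\ref{cor:weakConvDiscr}, and $\UL\chi_\tau\geq 0$ by Lemma~\ref{label:positivity}. Substituting, rearranging and integrating over $(0,T)$ yields
\begin{align*}
\int_{\Omega_T}|\nabla\OL\chi_\tau|^p\dxt\leq\int_{\Omega_T}|\nabla\OL\chi_\tau|^{p-2}\nabla\OL\chi_\tau\cdot\nabla\zeta_\tau\dxt+\int_{\Omega_T}\C R_\tau(\zeta_\tau-\OL\chi_\tau)\dxt,
\end{align*}
where $\C R_\tau$ collects the lower-order terms from \eqref{eqn:damageVIdiscr}, namely $\partial_t\chi_\tau$, $\gamma(\OL\chi_\tau)$, $\tfrac12 b'(\OL\chi_\tau)\CC\e(\UL u_\tau){:}\e(\UL u_\tau)$, $\Theta_M(\UL w_\tau)$ and $\rho'(\UL\chi_\tau)\Theta_M(\OL w_\tau)\di(\UL u_\tau)$. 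Thanks to Lemma~\ref{lemma:aPrioriDiscr}, the bound $\OL\chi_\tau,\UL\chi_\tau\in[0,1]$, the smoothness of $b,\gamma,\rho$ on $[0,1]$ and the truncation estimate $\Theta_M\leq\Theta(M)$, the family $\{\C R_\tau\}$ is uniformly bounded in $L^2(\Omega_T)$. Since $\|\zeta_\tau-\OL\chi_\tau\|_{L^\infty(\Omega_T)}\leq\|\OL\chi_\tau-\chi\|_{L^\infty(\Omega_T)}+\|(\chi-\UL\chi_\tau)^+\|_{L^\infty(\Omega_T)}\to 0$ by Corollary~\ref{cor:weakConvDiscr}, the remainder integral vanishes in the limit.

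\textbf{Main obstacle and conclusion.} The hard part is to pass to the limit in $\int_{\Omega_T}|\nabla\OL\chi_\tau|^{p-2}\nabla\OL\chi_\tau\cdot\nabla\zeta_\tau\dxt$. Extracting a further subsequence, boundedness gives a weak $L^{p'}(\Omega_T;\R^d)$-limit $A$ of the nonlinear flux $|\nabla\OL\chi_\tau|^{p-2}\nabla\OL\chi_\tau$. By Stampacchia's lattice identity,
\begin{align*}
\nabla\zeta_\tau-\nabla\chi=(\nabla\UL\chi_\tau-\nabla\chi)\,\mathbf 1_{\{\chi>\UL\chi_\tau\}}\quad\text{a.e.},
\end{align*}
so the task reduces to showing that the ``defect'' integral $\int_{\Omega_T}|\nabla\OL\chi_\tau|^{p-2}\nabla\OL\chi_\tau\cdot(\nabla\UL\chi_\tau-\nabla\chi)\,\mathbf 1_{\{\chi>\UL\chi_\tau\}}\dxt$ tends to zero. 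This is the genuine obstacle: no pointwise order between $\chi$ and $\UL\chi_\tau$ is available, so the exceptional set need not have vanishing measure; one must combine the uniform smallness of $\chi-\UL\chi_\tau$ with an equi-integrability argument on $|\nabla\UL\chi_\tau-\nabla\chi|^p$ (uniformly bounded in $L^\infty(0,T;L^1(\Omega))$) to conclude. Once this is secured, one obtains $\limsup_\tau\int_{\Omega_T}|\nabla\OL\chi_\tau|^p\dxt\leq\int_{\Omega_T}A\cdot\nabla\chi\dxt$, and the standard Minty monotonicity trick based on $(|a|^{p-2}a-|b|^{p-2}b)\cdot(a-b)\geq 0$ for $a,b\in\R^d$ identifies $A=|\nabla\chi|^{p-2}\nabla\chi$. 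This yields the desired norm bound and, by uniform convexity of $L^p$ together with weak convergence, the asserted strong convergence of $\OL\chi_\tau$ in $L^p(0,T;W^{1,p}(\Omega))$.
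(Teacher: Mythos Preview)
Your overall strategy---test the discrete variational inequality with a recovery sequence satisfying the constraint $0\leq\zeta_\tau\leq\underline\chi_\tau$, then exploit the $p$-monotonicity---is precisely the paper's approach. The gap lies in your choice $\zeta_\tau=\min(\chi,\underline\chi_\tau)$ and, more specifically, in the treatment of the defect integral on $\{\chi>\underline\chi_\tau\}$. You acknowledge this yourself as ``the genuine obstacle'' and propose to close it by an equi-integrability argument, but that step does not go through: mere boundedness of $\{|\nabla\underline\chi_\tau-\nabla\chi|^p\}$ in $L^\infty(0,T;L^1(\Omega))$ does \emph{not} yield equi-integrability, and even if it did, you would additionally need $|\{\chi>\underline\chi_\tau\}|\to 0$, which the uniform convergence $\underline\chi_\tau\to\chi$ simply does not give (the superlevel set can keep fixed positive measure while the function values differ by arbitrarily little). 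The uniform smallness of $\chi-\underline\chi_\tau$ is of no help here, since the integrand involves the \emph{gradients} $\nabla\underline\chi_\tau-\nabla\chi$, not the function values. In effect, the statement ``$\nabla\zeta_\tau\to\nabla\chi$ strongly in $L^p$'' which you need is equivalent to the strong gradient convergence you are trying to prove, so the argument is circular.

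The paper closes exactly this gap by invoking an external approximation result \cite[Lemma~5.2]{WIAS1520}: exploiting $p>d$ and the uniform convergence $\underline\chi_\tau\to\chi$, one constructs a sequence $\{\zeta_\tau\}\subset L^p(0,T;W_+^{1,p}(\Omega))$ with $0\leq\zeta_\tau\leq\underline\chi_\tau$ \emph{and} $\zeta_\tau\to\chi$ strongly in $L^p(0,T;W^{1,p}(\Omega))$. With such a recovery sequence, the proof is immediate from the uniform monotonicity inequality
\[
c\,\|\nabla\overline\chi_\tau-\nabla\chi\|_{L^p(\Omega_T)}^p
\leq\int_{\Omega_T}\bl|\nabla\overline\chi_\tau|^{p-2}\nabla\overline\chi_\tau-|\nabla\chi|^{p-2}\nabla\chi\br\cdot\nabla(\overline\chi_\tau-\chi)\dxt,
\]
by splitting $\overline\chi_\tau-\chi=(\overline\chi_\tau-\zeta_\tau)+(\zeta_\tau-\chi)$: the first piece is controlled by testing \eqref{eqn:damageVIdiscr} with $\zeta_\tau$, and the second vanishes by the strong convergence of $\zeta_\tau$. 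Your $\min(\chi,\underline\chi_\tau)$ does not, in general, enjoy this strong $W^{1,p}$-convergence; that approximation lemma is the missing ingredient.
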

	\begin{proof}
		Applying an approximations result from \cite[Lemma 5.2]{WIAS1520}, we obtain a sequence $\{\zeta_\tau\}$ in the space $L^p(0,T;W_+^{1,p}(\Omega))$
%		and constants $\nu_\tau(t)>0$
		such that $\zeta_\tau\to \chi$ in $L^p(0,T;W^{1,p}(\Omega))$ as $\tau\downarrow 0$ and
		$$
			0\leq \zeta_\tau(t)\leq\underline{\chi}_\tau(t)\text{ a.e. in }\Omega_T.
		$$
		
		The claim can now be shown by using a uniform monotonicity estimate of the $L^p$-norm
		\begin{align*}
			\|\nabla\overline{\chi}_\tau-\nabla\chi\|_{L^p(\Omega_T)}^p
				\leq{}& C\int_{\Omega_T}\bl|\nabla\overline{\chi}_\tau|^{p-2}\nabla\overline{\chi}_\tau-|\nabla\chi|^{p-2}\nabla\chi\br\cdot\nabla(\overline{\chi}_\tau-\chi)\dxt\\
				={}&C\int_{\Omega_T}\bl|\nabla\overline{\chi}_\tau|^{p-2}\nabla\overline{\chi}_\tau-|\nabla\chi|^{p-2}\nabla\chi\br\cdot\nabla(\overline{\chi}_\tau-\zeta_\tau)\dxt\\
				&+C\int_{\Omega_T}\bl|\nabla\overline{\chi}_\tau|^{p-2}\nabla\overline{\chi}_\tau-|\nabla\chi|^{p-2}\nabla\chi\br\cdot\nabla(\zeta_\tau-\chi)\dxt,
		\end{align*}
		by applying Corollary \ref{cor:weakConvDiscr} and by testing the variational inequality \eqref{eqn:damageVIdiscr} with $\zeta_\tau$,
		it can be shown that $lim\,sup$ of the right-hand side is $\leq 0$.
		\ep
	\end{proof}
	
	The passage to the limit $\tau\downarrow 0$ in the time-discrete system in Lemma \ref{lemma:discrSys} can now be performed as follows.
	\begin{itemize}
			\item
				\textbf{Heat equation.}
				Integrating equation \eqref{eqn:discrHeatEq} over the time $[0,T]$, Corollary \ref{cor:weakConvDiscr} allows to pass to the limit $\tau\downarrow 0$ by
				taking into account the uniform boundedness of $K_M(\UL w_\tau)$, $\Theta_M(\UL w_\tau)$ and $\Theta_M(\OL w_\tau)$ in $L^\infty(\Omega)$.
				Then, by switching to an a.e. $t$ formulation in the limit, we obtain for every $\zeta\in H^{1}(\Omega)$ and a.e. $t\in(0,T)$:
				\begin{align}
					&\langle\partial_t w,\zeta\rangle_{H^1}+\int_\Omega\bl K_M(w)\nabla w\cdot\nabla\zeta
						+\Theta_M(w)\partial_t \chi\zeta\br\dx\notag\\
					&\quad+\int_\Omega\Big(\rho(\chi)\Theta_M(w)\di\bl \partial_t u\br\zeta
					+\rho'(\chi)\Theta_M(w)\di(u)\partial_t\chi\zeta\Big)\dx=0.
				\label{eqn:heatEq2}
				\end{align}
			\item
				\textbf{Balance of forces.}
				To obtain the equation for the balance of forces, we integrate equation \eqref{eqn:discrMomentumEq} over $\Omega_T$ and use
				Corollary \ref{cor:weakConvDiscr} to pass to the limit $\tau\downarrow 0$.
				In the limit we have the necessary regularity properties to switch to an a.e. formulation in $\Omega_T$, i.e.
				it holds
				\begin{align}
				\label{eqn:momentumEq}
					&\partial_{tt}u-\di\bl b(\chi)\e(u)\br-\di\bl a(\chi)\e(\partial_t u)\br
					+\di\bl\rho(\chi)\Theta_M(w)\mathbf 1\br=0
				\end{align}
				a.e. in $\Omega_T$.
			\item
				\textbf{One-sided variational inequality for the damage process.}
				The limit passage for equation \eqref{eqn:damageVIdiscr} can be accomplished by an approximation argument developed in \cite{WIAS1520}.
				Note that this approach strongly relies on $p>d$ (see (A8)).
				We sketch the argument.
				\begin{itemize}
					\item
						Initially, the main idea is to consider time-depending test-functions $\Psi\in L^\infty(0,T;W_-^{1,p}(\Omega))$
						which satisfy for a.e. $t\in(0,T)$ the constraint
						$$
							\{x\in\OL\Omega\,|\,\Psi(x,t)=0\}\supseteq\{x\in\OL\Omega\,|\,\chi(x,t)=0\}.
						$$
						Here, we make use of the embedding $W^{1,p}(\Omega)\hookrightarrow C(\OL\Omega)$.
					\item
						As shown in \cite[Lemma 5.2]{WIAS1520}, we obtain an approximation sequence $\{\Psi_\tau\}\subseteq L^p(0,T;W_-^{1,p}(\Omega))$
						and constants $\nu=\nu(\tau,t)>0$ (independent of $x$) such that
						$\Psi_\tau\to\Psi$ in $L^p(0,T;W^{1,p}(\Omega))$ as $\tau\downarrow 0$ and
						$0\leq -\nu\Psi_\tau(t)\leq\overline{\chi}_\tau(t)$ in $\Omega$ for a.e. $t\in(0,T)$.
						Multiplying this inequality by -1, adding $\overline{\chi}_\tau(t)$
						and using the monotonicity condition $\overline{\chi}_\tau\leq \underline{\chi}_\tau$,
						we obtain
						\begin{align}
						\label{eqn:tfEst}
							0\leq \nu\Psi_\tau(t)+\overline{\chi}_\tau(t)\leq \underline{\chi}_\tau(t)\text{ in }\Omega.
						\end{align}
					\item
						Because of \eqref{eqn:tfEst}, we are allowed to test \eqref{eqn:damageVIdiscr} with $\nu_\tau(t)\Psi_\tau(t)+\overline{\chi}_\tau(t)$.
						Dividing the resulting inequality by $\nu$ (which is positive and independent of $x$), integrating
						in time over $[0,T]$, passing to the limit and switching back to an a.e. $t$ formulation, we obtain
						for a.e. $t\in(0,T)$
						\begin{align*}
							&0\leq\int_\Omega\bl \partial_t \chi\zeta+|\nabla\chi|^{p-2}\nabla\chi\cdot\nabla\zeta+\gamma(\chi)\zeta
								+\frac{b'(\chi)}{2}\CC\e(u):\e(u)\zeta\br\dx\notag\\
							&\qquad+\int_\Omega\bl-\Theta_M(w)\zeta
								-\rho'(\chi)\Theta_M(w)\di(u)\zeta\br\dx,
						\end{align*}
						for all $\zeta\in W_-^{1,p}(\Omega)$ with $\{\zeta=0\}\supseteq\{\chi(t)=0\}$.
					\item
						It is shown in \cite[Lemma 5.3]{WIAS1520} that, in this case, we obtain
						\begin{align}
							&0\leq\int_\Omega\bl \partial_t \chi\zeta+|\nabla\chi|^{p-2}\nabla\chi\cdot\nabla\zeta
								+\gamma(\chi)\zeta+\frac{b'(\chi)}{2}\CC\e(u):\e(u)\zeta\br\dx\notag\\
						\label{eqn:damageEq}
							&\qquad+\int_\Omega\bl-\Theta_M(w)\zeta
								-\rho'(\chi)\Theta_M(w)\di(u)\zeta+\xi\zeta\br\dx
						\end{align}
						for all $\zeta\in W_-^{1,p}(\Omega)$ and for a.e. $t\in(0,T)$, where $\xi\in L^2(0,T;L^2(\Omega))$ is given by
						\begin{align}
							&\xi:=-\mathbf 1_{\{\chi=0\}}\Big(\gamma(\chi)+\frac{b'(\chi)}{2}\CC\e(u):\e(u)-\Theta_M(w)
							-\rho'(\chi)\Theta_M(w)\di(u)\Big)^+,
						\label{eqn:defXiM}
						\end{align}
						with $(\cdot)^+:=\max\{\cdot,0\}$.
						Note that $\partial_t \chi$ does not appear in the bracket.
						In particular, $\xi$ fulfills
						\begin{align}
							&\int_\Omega\xi(\zeta-z)\dx\leq 0
						\label{eqn:damageEq2}
						\end{align}
						for all $\zeta\in W_+^{1,p}(\Omega)$ and a.e. $t\in(0,T)$.
				\end{itemize}
			\item
				\textbf{Partial energy inequality.}
				Testing the variational inequality \eqref{eqn:damageVIdiscr} with $\chi_\tau^k-\chi_\tau^{k-1}$ and applying the convexity argument
				$$
				\int_\Omega|\nabla\chi_\tau^k|^{p-2}\nabla\chi_\tau^k\cdot\nabla(\chi_\tau^k-\chi_\tau^{k-1})\dx
					\geq \int_\Omega\frac 1p|\nabla \chi_\tau^k|^p\dx-\int_\Omega\frac 1p|\nabla \chi_\tau^{k-1}|^p\dx
				$$
				and summing over the time index $k=\UL s_\tau/\tau+1,\ldots,\OL t_\tau/\tau$, we end up with
				\begin{align*}
					&\int_\Omega\frac 1p|\nabla \overline{\chi}_\tau(t)|^p\dx-\int_\Omega\frac 1p|\nabla \OL\chi_\tau(s)|^p\dx\\
					&\qquad+\int_{\UL s_\tau}^{\OL t_\tau}\int_\Omega\Big(\gamma(\overline{\chi}_\tau)
					+\frac{b'(\overline{\chi}_\tau)}{2}\CC\e(\underline{u}_\tau):\e(\underline{u}_\tau)\Big)\partial_t\chi_\tau\dx\diota\\
					&\qquad+\int_{\UL s_\tau}^{\OL t_\tau}\int_\Omega\Big(-\Theta_M(\underline{w}_\tau)
					-\rho'(\underline{\chi}_\tau)\Theta_M(\overline{w}_\tau)\di(\underline{u}_\tau)
					+\partial_t\chi_\tau\Big)\partial_t\chi_\tau\dx\diota\leq 0
%				\label{eqn:EI}
				\end{align*}
				for a.e. $t\in(0,T)$.
				Passing to the limit $\tau\downarrow 0$ by using Corollary \ref{cor:weakConvDiscr},
				weakly lower-semicontinuity arguments and the estimate $t\leq \OL t_\tau$ and $s\geq \UL s_\tau$ for the quadratic term in $\partial_t\chi$,
				we get for a.e. $0\leq s\leq t\leq T$ and for a.e. $t\in(0,T)$ with $s=0$ the desired partial energy inequality
				\begin{align}
					&\int_\Omega\frac 1p|\nabla \chi(t)|^p\dx-\int_\Omega\frac 1p|\nabla \chi(s)|^p\dx
						+\int_{s}^{t}\int_\Omega\Big(\gamma(\chi)+\frac{b'(\chi)}{2}\CC\e(u):\e(u)\Big)\partial_t\chi\dx\diota\notag\\
					&\qquad+\int_{s}^{t}\int_\Omega\Big(-\Theta_M(w)-\rho'(\chi)\Theta_M(w)\di(u)+\partial_t\chi\Big)\partial_t\chi\diota\leq 0.
				\label{eqn:EI}
				\end{align}
		\end{itemize}
		In conclusion, we have proven existence of weak solutions to the truncated system given by \eqref{eqn:heatEq2}, \eqref{eqn:momentumEq},
		\eqref{eqn:damageEq}, \eqref{eqn:damageEq2} and \eqref{eqn:EI}.
		
	\section{Existence of weak solutions for the limit system}
	\label{section:existence2}
	In this section, we will perform the limit analysis for weak solutions of the truncated system as $M\uparrow\infty$.
	We consider for each $M\in\N$ a weak solution $(u_M,w_M,\chi_M,\xi_M)$ as proven in the previous section.
	\subsection{A priori estimates}
	The boundedness properties for $(u_M,w_M,\chi_M,\xi_M)$ uniformly in $M$ are
	based on six different types of a~priori estimates.
		An important ingredient for this estimation series is the assumption
		$1/\sigma\leq 2q-1$ (see (A3)) which is utilized in the subsequent second a priori estimate.
	\begin{lemma}[A priori estimates independent of $M$]
	\label{lemma:aPriori}
	The following boundedness\linebreak properties with respect to $M$ are satisfied:
	\begin{align*}
		\textit{(i)}&\textit{ First a priori estimate:}\notag\\
		&\quad\{u_M\}&&\text{ in }H^1(0,T;H^1(\Omega;\R^d))\cap W^{1,\infty}(0,T;L^2(\Omega;\R^d)),\\
		&\quad\{w_M\}&&\text{ in }L^\infty(0,T;L^1(\Omega)),\\
		&\quad\{\chi_M\}&&\text{ in }L^\infty(0,T;W^{1,p}(\Omega))\cap H^1(0,T;L^2(\Omega)),\\
		\textit{(ii)}&\textit{ Second a priori estimate:}\notag\\
		&\quad\{\C T_M(w_M)\}&&\text{ in }L^2(0,T;H^1(\Omega)),\\
		\textit{(iii)}&\textit{ Third a priori estimate:}\notag\\
		&\quad\{u_M\}&&\text{ in }H^1(0,T;H^2(\Omega;\R^d))\cap W^{1,\infty}(0,T;H^1(\Omega;\R^d))\notag\\
		&&&\quad\;\;\cap H^{2}(0,T;L^2(\Omega;\R^d)),\\
		\textit{(iv)}&\textit{ Fourth a priori estimate:}\notag\\
		&\quad\{\C T_M(w_M)\}&&\text{ in }L^\infty(0,T;L^2(\Omega))\cap L^{2(q+1)}(0,T;L^{6(q+1)}(\Omega)),\\
		\textit{(v)}&\textit{ Fifth a priori estimate:}\notag\\
		&\quad\{w_M\}&&\text{ in }L^\infty(0,T;L^2(\Omega))\cap L^2(0,T;H^1(\Omega)),\\
		\textit{(vi)}&\textit{ Sixth a priori estimate:}\notag\\
		&\quad\{w_M\}&&\text{ in }W^{1,r}(0,T;W_\nu^{2,s}(\Omega)^*)
	\end{align*}
	
	with the constants
	$r:=(2q+2)/(2q_0+1)$ and $s:=(6q+6)/(6q-2q_0+5)$.
	\end{lemma}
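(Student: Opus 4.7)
The plan is to derive the six bounds in the stated order, successively testing the continuous truncated system \eqref{eqn:heatEq2}--\eqref{eqn:momentumEq} with carefully chosen functions and relying on each previous estimate to close the next. Estimate (i) requires no new work: the corresponding bound in Lemma~\ref{lemma:aPrioriDiscr} is already uniform in $M$, so I would take the weak/weak-star limit $\tau\downarrow 0$ and invoke lower semicontinuity on each functional appearing in the discrete energy inequality.

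For estimate (ii) I would reproduce the Boccardo--Gallou\"et argument sketched in the remark following (A3): test \eqref{eqn:heatEq2} with $-(w_M+1)^{-\alpha}$ for some $\alpha\in[1/\sigma,2q-1]$ (a nonempty interval by (A3)). The diffusion contributes $\alpha K_M(w_M)(w_M+1)^{-\alpha-1}|\nabla w_M|^2$, which on $\{w_M\le M\}$ dominates $c|\nabla \C T_M(w_M)|^2$ thanks to the lower bound on $K$ and the choice $\alpha\le 2q-1$; the coupling terms carry a factor $\Theta_M(w_M)(w_M+1)^{-\alpha}$, uniformly bounded in $L^\infty$ by (A2) and $\alpha\ge 1/\sigma$, and are absorbed using (i). Estimate (iii) is then obtained by testing \eqref{eqn:momentumEq} with $-\di(\DD\e(\partial_t u_M))$ exactly as in the third estimate of Lemma~\ref{lemma:aPrioriDiscr}; the only genuinely new contribution is the thermoelastic divergence, which by the product rule splits into $\rho'(\chi_M)\Theta_M(w_M)\nabla\chi_M$ and $\rho(\chi_M)\nabla\Theta_M(w_M)$. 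The first piece is controlled in $L^2(L^2)$ using $\nabla\chi_M\in L^\infty(L^p)$ from (i) combined with the $L^2(L^\infty)$-control on $\Theta_M(w_M)$ coming from (ii) and Sobolev embedding, while the second is bounded directly by (ii); Gr\"onwall's inequality closes the argument.

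For estimate (iv) I would test \eqref{eqn:heatEq2} with $\C T_M(w_M)$. On $\{w_M\le M\}$ the diffusion yields $K_M(w_M)|\nabla\C T_M(w_M)|^2\gtrsim(\C T_M(w_M)^{2q}+1)|\nabla\C T_M(w_M)|^2$, so that $\nabla\bl\C T_M(w_M)^{q+1}\br\in L^2(L^2)$; the time-derivative term contributes $\int_\Omega\widehat{\C T_M}(w_M)\dx\ge\tfrac12\|\C T_M(w_M)\|_{L^2}^2$, yielding the $L^\infty(L^2)$-bound, and the desired $L^{2(q+1)}(L^{6(q+1)})$-bound then follows from $H^1\hookrightarrow L^6$. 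The delicate estimate is (v), where the $\C T_M$-bound must be lifted to $w_M$ itself: following the strategy outlined in the remark, I would test \eqref{eqn:heatEq2} with $w_M$ and control $\Theta_M(w_M)w_M$ in $L^2(L^3)$ by splitting into $\{w_M\le M\}$ (absorbed via (iv)) and $\{w_M>M\}$, where the bound $(M^{2/\sigma}+1)/M^{2/3}$ stays uniform in $M$ precisely because $\sigma\ge 3$. The coupling factor $\partial_t\chi_M+\rho(\chi_M)\di(\partial_t u_M)+\rho'(\chi_M)\di(u_M)\partial_t\chi_M$ lies in $L^2(L^{3/2})$ by (i) and (iii) and pairs with $\Theta_M(w_M)w_M$ via H\"older. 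Finally, (vi) is a standard comparison argument: isolating $\partial_t w_M$ and dualizing against $W_\nu^{2,s}(\Omega)$ using $\nabla\widehat K_M(w_M)$ in place of $K_M(w_M)\nabla w_M$, the growth (A3) together with (v) yields the $W^{1,r}(0,T;W_\nu^{2,s}(\Omega)^*)$-bound with the exponents $r,s$ as in the remark following Definition~\ref{def:weakSolution}.

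The main obstacle is the new nonlinear coupling $\rho'(\chi)\Theta(w)\di(u)\chi_t$, absent in \cite{RR12}: it produces an additional source in the fifth estimate and requires \emph{simultaneously} the enhanced spatial regularity of $u_M$ furnished by (iii), the $W^{1,p}$-control on $\chi_M$ furnished by (i) with $p>d$, and the $L^2(L^\infty)$-bound on $\Theta_M(w_M)$ furnished by (ii) together with the splitting argument based on $\sigma\ge 3$. This is precisely why the six estimates must be chained in the above order rather than derived independently.
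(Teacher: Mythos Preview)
Your proposal follows the paper's approach closely and the overall architecture---energy estimate, Boccardo--Gallou\"et test, $H^2$-regularity test, then the chain $(iv)\to(v)\to(vi)$---is exactly right. Two points deserve correction.

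First, in your third estimate you claim an ``$L^2(L^\infty)$-control on $\Theta_M(w_M)$ coming from (ii) and Sobolev embedding.'' This fails in $d=3$: estimate (ii) gives only $\C T_M(w_M)\in L^2(0,T;H^1(\Omega))\hookrightarrow L^2(0,T;L^6(\Omega))$, and the sublinear growth of $\Theta$ does not lift this to $L^\infty$ in space. What the paper actually uses is $\Theta_M(w_M)\in L^2(0,T;L^{2p/(p-2)}(\Omega))$, obtained from (A2) and (ii) via $\|\Theta(\C T_M(w_M))\|_{L^{2p/(p-2)}}\le c_0(\|\C T_M(w_M)\|_{L^{2p/(\sigma(p-2))}}^{1/\sigma}+1)$ with $2p/(\sigma(p-2))\le 6$ thanks to $\sigma\ge 3$ and $p>3$. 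This is precisely the H\"older exponent needed to pair with $\nabla\chi_M\in L^\infty(L^p)$ and still land in $L^2(L^2)$, so your argument goes through once you replace the $L^\infty$ claim by the correct $L^{2p/(p-2)}$ bound.

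Second, a cosmetic difference: in (ii) the paper tests with $-(\C T_M(w_M)+1)^{-\alpha}$ rather than your $-(w_M+1)^{-\alpha}$. Both choices work, since on $\{w_M>M\}$ one has $\nabla\C T_M(w_M)=0$ anyway and the coupling factor $\Theta_M(w_M)(w_M+1)^{-\alpha}$ stays bounded by the same growth argument; the truncated test function merely makes the computation slightly cleaner.
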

%	\begin{lemma}[First a priori estimate]
%	\label{lemma:firstAPE2}
%		It holds $w_M\geq 0$ a.e. in $\Omega_T$ and
%		\begin{align*}
%			&\|w_M\|_{L^\infty(0,T;L^1(\Omega))}\leq C,
%			&&\|\partial_t u_M\|_{L^2(\Omega_T;\R^d)}\leq C,\\
%			&\|u_M\|_{L^\infty(0,T;H^1(\Omega;\R^d))\cap H^1(0,T;H^1(\Omega;\R^d))}\leq C,
%			&&\|\chi_M\|_{L^\infty(0,T;W^{1,p}(\Omega))\cap H^1(0,T;L^2(\Omega))}\leq C.
%		\end{align*}
%		for a constant $C>0$ independent of $\tau$ and $k=0,\ldots,T_\tau$.
%	\end{lemma}
	\begin{proof}[Proof of the first a~priori estimate]
	The first a priori estimate in Lemma \ref{lemma:aPrioriDiscr}
	which is based on the energy estimate \eqref{eqn:aPrioriEst1} is also independent of $M$.
	Lower semi-continuity arguments show the energy estimate also for weak solutions $(u_M,w_M,\chi_M)$ of the time-continuous, truncated system.
	\ep
	\end{proof}
%	\begin{lemma}[Second a priori estimate]
%		It holds
%		\begin{align*}
%			\|\C T_M(w_M)\|_{L^2(0,T;H^1(\Omega))}\leq C.
%		\end{align*}
%	\end{lemma}
	\\
	\begin{proof}[Proof of the second a~priori estimate]
		We deduce the desired estimate by testing \eqref{eqn:heatEq2} with the test-function
		\begin{align}
		\label{eqn:testFct}
			\zeta_M(t)=-(\C T_M(w_M(t))+1)^{-\alpha}\in H^1(\Omega),
		\end{align}
		where $\alpha$ is a fixed real number satisfying $1/\sigma\leq \alpha\leq 2q-1$
		(recap Assumption (A3)).
		We remind that $\C T_M(w_M(t))\geq 0$ a.e. in $\Omega$.
		Integration in time reveals
		\begin{align}
			&\int_0^T\big\langle\partial_t w_M,-(\C T_M(w_M)+1)^{-\alpha}\big\rangle_{H^1}\dt+\int_{\Omega_T}\frac{K_M(w_M)}{(\C T_M(w_M)+1)^{\alpha+1}}\nabla w_M\cdot\nabla \C T_M(w_M)\dxt\notag\\
			&+\int_{\Omega_T}\Big(\partial_t \chi_M+\rho(\chi_M)\di\bl \partial_t u_M\br
				+\rho'(\chi_M)\di(u_M)\partial_t\chi_M\Big)\frac{-\Theta_M(w_M)}{(\C T_M(w_M)+1)^\alpha}\dxt=0.
		\label{eqn:heatEq3}
		\end{align}
		The integral terms on the left-hand side are transformed/estimated in the following calculations.
		\begin{itemize}
			\item
				Let $\psi_M$ denote the function
				$$
					\psi_M(t):=\int_0^t\zeta_M(s)\ds.
				$$
				The use of a generalized chain-rule yields
				\begin{align*}
					\int_0^T\big\langle\partial_t w_M,-(\C T_M(w_M)+1)^{-\alpha}\big\rangle\dt=\int_\Omega\psi_M(w_M(T))\dx-\int_\Omega\psi_M(w^0)\dx.
				\end{align*}
			\item
				By utilizing the identities $\nabla w_M\cdot\nabla \C T_M(w_M)=|\nabla \C T_M(w_M)|^2$
				and $K_M(w_M)=K(\C T_M(w_M))$, the growth assumption for $K$ (see Assumption (A3)),
				the elementary estimate
				$$
					C(a^s+b^s)\geq (a+b)^s\quad\text{for all $a,b\in[0,\infty)$ and constants $C>0$ and $s\geq 1$}
				$$
				(in the sequel we will choose $a=\C T_M(w_M)$, $b=1$ and $s=2q$ which is greater $1$ by (A3))
				as well as $\alpha\leq 2q-1$, we obtain
				\begin{align*}
				\begin{split}
					&\int_{\Omega_T}\frac{K_M(w_M)}{(\C T_M(w_M)+1)^{\alpha+1}}\nabla w_M\cdot\nabla \C T_M(w_M)\dxt\\
					&\qquad=\int_{\Omega_T}\frac{K(\C T_M(w_M))}{(\C T_M(w_M)+1)^{\alpha+1}}|\nabla \C T_M(w_M)|^2\dxt\\
					&\qquad\geq c_1\int_{\Omega_T}\frac{(\C T_M(w_M)^{2q}+1)}{(\C T_M(w_M)+1)^{\alpha+1}}|\nabla \C T_M(w_M)|^2\dxt\\
					&\qquad\geq \widetilde c_1\int_{\Omega_T}\frac{(\C T_M(w_M)+1)^{2q}}{(\C T_M(w_M)+1)^{\alpha+1}}|\nabla \C T_M(w_M)|^2\dxt\\
					&\qquad\geq \widetilde c_1\|\nabla \C T_M(w_M)\|_{L^2(\Omega_T;\R^d)}^2.
					\end{split}
				\end{align*}
			\item
				The identity $\Theta_M(w_M)=\Theta(\C T_M(w_M))$, the growth assumption for $\Theta$ (see Assumption (A2))
				and the estimate $1/\sigma\leq \alpha$ imply boundedness of
				$$
					\left|\frac{\Theta_M(w_M)}{(\C T_M(w_M)+1)^\alpha}\right|
					=\frac{\Theta(\C T_M(w_M))}{(\C T_M(w_M)+1)^\alpha}
					\leq c_0\frac{(\C T_M(w_M)^{1/\sigma}+1)}{(\C T_M(w_M)+1)^\alpha}\leq C.
				$$
		\end{itemize}
		Putting the pieces together, \eqref{eqn:heatEq3} results in
		\begin{align*}
			&\int_\Omega\psi_M(w_M(T))\dx-\int_\Omega \psi_M(w^0)\dx+\widetilde c_1\|\nabla \C T_M(w_M)\|_{L^2(\Omega_T;\R^d)}^2\\
			&\qquad\leq C\left\|\partial_t \chi_M+\rho(\chi_M)\di\bl \partial_t u_M\br
				+\rho'(\chi_M)\di(u_M)\partial_t\chi_M\right\|_{L^1(\Omega_T)}.
		\end{align*}
		The right-hand side estimates as
		\begin{align*}
			\text{r.h.s.}\leq{}&
				C\big(\|\partial_t \chi_M\|_{L^1(\Omega_T)}
				+\|\rho(\chi_M)\|_{L^\infty(\Omega_T)}\|\di\bl \partial_t u_M\br\|_{L^1(\Omega_T)}\\
				&+\|\rho'(\chi_M)\|_{L^\infty(\Omega_T)}\|\di(u_M)\|_{L^2(\Omega_T)}\|\partial_t\chi_M\|_{L^2(\Omega_T)}\big)
		\end{align*}
		and is bounded by the first a~priori estimate.
		
		It remains to show boundedness of $\int_\Omega\psi_M(w_M(T))\dx$.
		Since
		$$
			|\psi_M'(x)|=|(\C T_M(x)+1)^{-\alpha}|\in[0,1],
		$$
		we obtain the growth condition $|\psi_M(x)|\leq |x|$.
		Hence
		$$
			\left|\int_\Omega\psi_M(w_M(T))\dx\right|
				\leq \int_\Omega w_M(T)\dx\quad\text{and}\quad
 			\left|\int_\Omega\psi_M(w^0)\dx\right|
				\leq \int_\Omega w^0\dx.
		$$
		Eventually, we obtain boundedness of $\|\nabla \C T_M(w_M)\|_{L^2(\Omega_T;\R^d)}$.
		The claim follows together with the boundedness of
		$\|\C T_M(w_M)\|_{L^\infty(0,T;L^1(\Omega))}$ by the first a~priori estimate.
		\ep
	\end{proof}\\
%	\begin{lemma}[Third a priori estimate]
%		It holds
%		\begin{align*}
%			&\|u_M\|_{H^1(0,T;H^2(\Omega;\R^d))\cap W^{1,\infty}(0,T;H^1(\Omega;\R^d))\cap H^{2}(0,T;L^2(\Omega;\R^d))}\leq C,
%		\end{align*}
%		for a constant $C>0$ independent of $\tau$.
%	\end{lemma}
	\begin{proof}[Proof of the third a~priori estimate]
		We test \eqref{eqn:momentumEq} with $\zeta=-\di(\DD\e(u_t))$ and adapt a calculation performed in \cite[Sixth a~priori estimate]{RR12}.
		Additionally, we need to estimate
		the following integral term:
		\begin{align*}
		\begin{split}
			&\left|\int_{\Omega_t}\di\bl\rho(\chi_M)\Theta_M(w_M)\mathbf 1\br\cdot\di(\DD\e(\partial_t u_M))\dxs\right|\\
			&\qquad\leq
				\int_{\Omega_t}|(\rho'(\chi_M)\nabla\chi_M\Theta_M(w_M))\cdot\di(\DD\e(\partial_t u_M))|\dxs\\
			&\qquad\quad+\int_{\Omega_t}|\rho(\chi_M)\Theta'(\C T_M(w_M))\nabla\big(\C T_M(w_M)\big)\cdot\di(\DD\e(\partial_t u_M))|\dxs\\
			&\qquad\leq C\|\rho'(\chi_M)\|_{L^\infty(L^\infty)}\|\nabla\chi_M\|_{L^\infty(L^p)}\|\Theta(\C T_M(w_M))\|_{L^2(L^{2p/(p-2)})}
			\|\partial_t u_M\|_{L^2(H^2)}\\
			&\qquad\quad+C\|\rho(\chi_M)\|_{L^\infty(L^\infty)}\|\Theta_M'(w_M)\|_{L^\infty(L^\infty)}\|\nabla\big(\C T_M(w_M)\big)\|_{L^2(L^2)}\|\partial_t u_M\|_{L^2(H^2)}.
		\end{split}
		\end{align*}
		By using the Lipschitz continuity of $\Theta$ (see Assumption (A2))
		and the first as well as the second a~priori estimates,
		it only remains to show boundedness of the term $\|\Theta(\C T_M(w_M))\|_{L^2(L^{2p/(p-2)})}$.
		Indeed, by using the growth assumption in (A2),
		\begin{align}
			\label{eqn:thetaEst}
			\|\Theta(\C T_M(w_M))\|_{L^2(L^{2p/(p-2)})}
				&\leq c_0\bl\|\C T_M(w_M)\|_{L^{2/\sigma}(L^{2p/(\sigma(p-2))})}^{1/\sigma}+1\br.
		\end{align}
		In the case $d=3$, we have $p>3$ and, in particular,
		$2p/(\sigma(p-2))\leq 6$ since $\sigma\geq 3$ by Assumption (A2).
		Consequently, by using the second a~priori estimate, the right-hand side of \eqref{eqn:thetaEst}
		is bounded.
		
		In the cases $d\in\{1,2\}$, boundedness of the right-hand side of \eqref{eqn:thetaEst}
		follows immediately from the second a~priori estimate and $\sigma\geq 3$.
		\ep
	\end{proof}
%	\begin{lemma}[Fourth a priori estimate]
%		It holds
%		\begin{align*}
%			\|\C T_M(w_M)\|_{L^\infty(0,T;L^2(\Omega))\cap L^{2(q+1)}(0,T;L^{6(q+1)}(\Omega))}\leq C.
%		\end{align*}
%	\end{lemma}
	\\
	\begin{proof}[Proof of the fourth a~priori estimate]
		Testing \eqref{eqn:heatEq2} with $\C T_M(w_M)$, integration in time over $[0,t]$ and using the generalized chain-rule yield
		\begin{align*}
			&\int_\Omega\widehat{\C T}_M(w_M(t))\dx-\int_\Omega \widehat{\C T}_M(w^0)\dx
			+\int_{\Omega_t} K(\C T_M(w_M))|\nabla \C T_M(w_M)|^2\dxs\\
			&+\int_{\Omega_t}\bl\partial_t \chi_M+\rho(\chi_M)\di\bl \partial_t u_M\br
				+\rho'(\chi_M)\di(u_M)\partial_t\chi_M\br\Theta_M(w_M)\C T_M(w_M)\dxs=0,
		\end{align*}
		where $\widehat{\C T}_M$ denotes the primitive of $\C T_M$ vanishing at $0$.
		By using Assumption (A3), the estimates (cf. \cite[Remark 2.10]{RR12})
		$$
			c\int_0^t\|\C T_M(w_M)\|_{L^{6(q+1)}(\Omega)}^{2(q+1)}\ds\leq\int_{\Omega_t}(\C T_M(w_M)^{2q}+1)|\nabla \C T_M(w_M)|^2\dxs,
		$$
		and
		$$
			\frac12|\C T_M(w_M)|^2\leq \widehat{\C T}_M(w_M),
		$$
		we obtain by using H\"older's inequality in space and time 
		\begin{align*}
		\begin{split}
			&\int_\Omega\frac12|\C T_M(w_M)|^2\dx-\int_\Omega \widehat{\C T}_M(w^0)\dx
				+\widetilde c\|\C T_M(w_M)\|_{L^{2(q+1)}(0,t;L^{6(q+1)}(\Omega))}^{2(q+1)}\notag\\
			&\qquad
				\leq \|\partial_t\chi_M\|_{L^2(L^2)}\|\Theta_M(w_M)\C T_M(w_M)\|_{L^2(0,t;L^2(\Omega))}\\
			&\qquad\quad+\|\rho(\chi_M)\|_{L^\infty(L^\infty)}\|\di(\partial_t u_M)\|_{L^2(L^2)}\|\Theta_M(w_M)\C T_M(w_M)\|_{L^2(0,t;L^2(\Omega))}\\
			&\qquad\quad+\|\rho'(\chi_M)\|_{L^\infty(L^\infty)}\|\partial_t \chi_M\|_{L^2(L^2)}\|\di(u_M)\|_{L^\infty(L^6)}\|\Theta_M(w_M)\C T_M(w_M)\|_{L^2(0,t;L^3(\Omega))}\\
			&\qquad
				\leq C\|\Theta_M(w_M)\C T_M(w_M)\|_{L^2(0,t;L^3(\Omega))}.
%				\leq\int_0^t\|\partial_t \chi_M+\rho(\chi_M)\di\bl \partial_t u_M\br\|_{L^2(\Omega)}
%				\|\Theta(\C T_M(w_M))\C T_M(w_M)\|_{L^2(\Omega)}\ds\notag\\
%			&\quad\quad+
%				\|\rho'(\chi_M)\|_{L^\infty(\Omega_t)}\|\di(u_M)\|_{L^\infty(0,T;L^6(\Omega))}\int_0^t\|\partial_t\chi_M\|_{L^2(\Omega)}
%				\|\Theta(\C T_M(w_M))\C T_M(w_M)\|_{L^3(\Omega)}\ds.
		\end{split}
		\end{align*}
		Notice the following implications:
		\begin{align*}
			\begin{cases}
				\text{if }0\leq q\leq 1&\text{then }1/\sigma\leq q\text{ (since }2q-1\leq q\text{ and }1/\sigma\leq 2q-1\text{ by (A3))},\\
				\text{if }q>1&\text{then }1/\sigma\leq q\text{ (since }\sigma\geq 3\text{ by (A2)).}
			\end{cases}
		\end{align*}
		Therefore, in both  cases $1/\sigma\leq q$ and we can estimate the right-hand side above as follows
		by using Assumption (A2):
		\begin{align*}
			\|\Theta_M(w_M)\C T_M(w_M)\|_{L^2(0,t;L^3(\Omega))}
			&\leq C\big(\||T_M(w_M)|^{1/\sigma+1}\|_{L^2(0,t;L^3(\Omega))}+1\big)\\
			&\leq C\big(\|\C T_M(w_M)\|_{L^{2(q+1)}(0,t;L^{3(q+1)}(\Omega))}^{q+1}+1\big)\\
			&\leq C\big(\|\C T_M(w_M)\|_{L^{2(q+1)}(0,t;L^{6(q+1)}(\Omega))}^{q+1}+1\big).
%			&\quad
%				\leq \widehat C_\varrho+\varrho\int_0^t\|\C T_M(w_M)\|_{L^{6(q+1)}(\Omega)}^{2(q+1)}\dt.
		\end{align*}
		Thus the r.h.s. can be absorbed by the l.h.s. and we obtain the assertion.\ep
	\end{proof}
%	\begin{lemma}[Fifth a priori estimate]
%		It holds
%		\begin{align*}
%			\|w_M\|_{L^\infty(0,T;L^2(\Omega))\cap L^2(0,T;H^1(\Omega))}\leq C.
%		\end{align*}
%	\end{lemma}
	\\
	\begin{proof}[Proof of the fifth a~priori estimate]
		We test equation \eqref{eqn:heatEq2} with $w_M$, integrate over the time interval $[0,t]$ and obtain
		\begin{align*}
			&\frac 12\int_\Omega|w_M(t)|^2\dx-\frac 12\int_\Omega|w_M(0)|^2\dx
			+\int_{\Omega_t} K_M(w_M)|\nabla w_M|^2\dxt\\
			&\quad+\int_{\Omega_t}\bl\partial_t \chi_M+\rho(\chi_M)\di\bl \partial_t u_M\br
				+\rho'(\chi_M)\di(u_M)\partial_t\chi_M\br\Theta_M(w_M)w_M\dxt=0.
		\end{align*}
		We introduce the sublevel and the strict superlevel set of $w_M(t)$ at height $M$ as
		\begin{subequations}
		\label{eqn:subSupLevelSets}
		\begin{align}
			&l_M^-(t):=\{x\in\Omega\,|\,w_M(x,t)\leq M\},\\
			&l_M^+(t):=\{x\in\Omega\,|\,w_M(x,t)> M\}
		\end{align}
		\end{subequations}
		and receive by utilizing H\"older's inequality as in the fourth a~priori estimate
		\begin{align}
			&\frac 12\int_\Omega|w_M(t)|^2\dx-\frac 12\int_\Omega|w_M(0)|^2\dx
				+c\int_{\Omega_t} |\nabla w_M|^2\dxt
				+c\int_0^t \|w_M\|_{L^{6(q+1)}(l_M^-(s))}^{2(q+1)}\ds\notag\\
			&\quad
				\leq C\|\Theta_M(w_M)w_M\|_{L^2(0,t;L^3(\Omega))}\notag\\
			&\quad
				\leq C\left(\int_0^t\|\Theta_M(w_M)w_M\|_{L^3(l_M^-(s))}^2\ds\right)^{1/2}
				+C\left(\int_0^t\|\Theta_M(w_M)w_M\|_{L^3(l_M^+(s))}^2\ds\right)^{1/2}.
%				\leq C_\varrho\bl\bl\int_0^t\|\Theta(w_M(t))w_M(t)\|_{L^3(l_M^-(t))}^3\dt\br^{1/3}
%				+\bl\int_0^t\|\Theta_M(w_M(t))w_M(t)\|_{L^3(\C O_M)}^3\br^{1/3}\br.
		\label{eqn:ineq1}
		\end{align}
		We treat the last two terms on the right-hand side as follows.
		\begin{itemize}
			\item
				By using the definition $l_M^-(s)$, the growth assumption for $\Theta$ in (A2) and the estimate
				$1/\sigma\leq q$ (see the proof of the fourth a~priori estimate), we obtain
				\begin{align*}
					\int_0^t\|\Theta_M(w_M)w_M\|_{L^3(l_M^-(s))}^2\ds
					&=\int_0^t\|\Theta(w_M)w_M\|_{L^3(l_M^-(s))}^2\ds\\
						&\leq C\bl\int_0^t\|w_M\|_{L^{6(q+1)}(l_M^-(s))}^{2(q+1)}\ds+1\br.
				\end{align*}
%				The r.h.s. can be absorbed by the l.h.s. of \eqref{eqn:ineq1}.
			\item
				H\"older's inequality and the embedding $H^1(\Omega)\hookrightarrow L^6(\Omega)$ yield
				\begin{align*}
					\int_0^t\|\Theta_M(w_M)w_M\|_{L^3(l_M^+(s))}^2\ds
					&\leq \esssup_{t\in(0,T)}\|\Theta_M(w_M(t))\|_{L^6(l_M^+(t))}^2\int_0^t\|w_M\|_{L^6(l_M^+(s))}^2\ds\\
					&\leq \esssup_{t\in(0,T)}\|\Theta_M(w_M(t))\|_{L^6(l_M^+(t))}^2\|w_M\|_{L^2(0,t;H^1(\Omega))}^2.
				\end{align*}
				By the fourth a~priori estimate, we have the boundedness of
				$$
					M^2\esssup_{t\in(0,T)}|l_M^+(t)|=\esssup_{t\in(0,T)}\int_{l_M^+(t)}M^2\dx\,\leq\,\|\C T_M(w_M)\|_{L^\infty(0,T;L^2(\Omega))}^2\dx\leq C,
				$$
				where $|l_M^+(t)|$ denotes the $d$-dimensional Lebesgue measure of $l_M^+(t)$.
				This implies by using the growth condition for $\Theta$ in (A2):
				\begin{align*}
					\esssup_{t\in(0,T)}\|\Theta_M(w_M(t))\|_{L^6(l_M^+(t))}^2&=\Theta(M)^2\esssup_{t\in(0,T)}|l_M^+(t)|^{1/3}\\
						&\leq c_0(M^{2/\sigma}+1)\esssup_{t\in(0,T)}|l_M^+(t)|^{1/3}\\
						&\leq c_0(M^{2/\sigma}+1)\frac{C}{M^{2/3}}.
				\end{align*}
				Since $\sigma\geq 3$, we obtain boundedness of $\esssup_{t\in(0,T)}\|\Theta_M(w_M(t))\|_{L^6(l_M^+(t))}^2$
				and hence
				\begin{align*}
					\int_0^t\|\Theta(w_M)w_M\|_{L^3(l_M^+(s))}^2\ds
					\leq C \|w_M\|_{L^2(0,t;H^1(\Omega))}^2.
				\end{align*}
		\end{itemize}
		Eventually, estimate \eqref{eqn:ineq1} yields to
		\begin{align*}
			&\frac 12\int_\Omega|w_M(t)|^2\dx-\frac 12\int_\Omega|w_M(0)|^2\dx
				+c\|\nabla w_M\|_{L^2(0,t;L^2(\Omega;\R^d))}^2
				+c\int_0^t \|w_M\|_{L^{6(q+1)}(l_M^-(t))}^{2(q+1)}\ds\notag\\
			&\quad
				\leq C\bl\int_0^t\|w_M\|_{L^{6(q+1)}(l_M^-(t))}^{2(q+1)}\ds+1\br^{1/2}
				+C \|w_M\|_{L^2(0,t;H^1(\Omega))}
		\end{align*}
		and thus the claim.\ep
	\end{proof}
%	\begin{lemma}[Sixth a priori estimate]
%		For $r=\frac{2q+2}{2q+1}$ and every $s>3$, there exists a $C>0$ such that
%		\begin{align*}
%			\|w_M\|_{W^{1,r}(0,T;(W_\nu^{2,s}(\Omega))^*)}\leq C
%		\end{align*}
%		for all $M\in\N$.
%	\end{lemma}
	\\
	To tackle the sixth a~priori estimate,
	we will make use of the primitive $\widehat K_M$ of $K_M$ vanishing at $0$
	and use the property
	\begin{align}
	\label{eqn:KMIdentity}
		\widehat K_M(x)=
		\begin{cases}
			\widehat K(x)&\text{if } 0\leq x\leq M,\\
			\widehat K(M)+x-M&\text{if }x>M.
		\end{cases}
	\end{align}
	Note that the identity $\widehat K_M(x)=\widehat K(\C T_M(x))$
	is not fulfilled while $K_M(x)=K(\C T_M(x))$ is true.
	By exploiting growth assumption (A3), we obtain the crucial estimate
	\begin{align}
	\label{eqn:KMestimate}
		\big|\widehat K_M(x)\big|&\leq
		\left.
		\begin{cases}
			C(x^{2q_0+1}+1)&\text{if } 0\leq x\leq M,\\
			C(M^{2q_0+1}+1)+x-M&\text{if }x>M
		\end{cases}
		\right\}
		\leq
		C(\C T_M(x)^{2q_0+1}+1)+x.
	\end{align}
	\begin{proof}[Proof of the sixth a~priori estimate]
		We will use a comparison argument in \eqref{eqn:heatEq2}.
		
		In what follows let $r:=\frac{2q+2}{2q_0+1}$ and $s:=\frac{6q+6}{6q-2q_0+5}$
		as in Definition \ref{def:weakSolution}.
		Applying integration by parts in \eqref{eqn:heatEq2}, we
		receive for all $\zeta\in W_\nu^{2,s}(\Omega)$:
		\begin{align}
			\langle\partial_t w_M,\zeta\rangle
				={}&\int_\Omega\bl \widehat K_M(w_M)\Delta\zeta
				-\bl\Theta_M(w_M)\partial_t \chi_M
				+\rho(\chi_M)\Theta_M(w_M)\di\bl \partial_t u_M\br\br\zeta\br\dx\notag\\
			\label{eqn:wMtDef}
			&-\int_\Omega\rho'(\chi_M)\Theta_M(w_M)\di(u_M)\partial_t\chi_M\zeta\dx.
		\end{align}
		Let
		$s^{**}:=\frac{6q+6}{2q-2q_0+1}>0$ denote the constant
		resulting from the continuous embedding
		$W^{2,s}(\Omega)\hookrightarrow L^{s^{**}}(\Omega)$.
		Due to the crucial identities
		\begin{align}
		\label{eqn:HoelderExp}
			\frac{1}{\frac{6q+6}{2q_0+1}}+\frac{1}{\frac{6q+6}{6q-2q_0+5}}=1,\;
			\frac{1}{\frac{6q+6}{2q_0+1}}+\frac{1}{6}+\frac{1}{2}+\frac{1}{\frac{6q+6}{2q-2q_0+1}}=1\text{ and }
			\frac{1}{\frac{6q+6}{q+2q_0+2}}+\frac{1}{2}+\frac{1}{\frac{6q+6}{2q-2q_0+1}}=1,
		\end{align}
		H\"older's inequality reveals
		\begin{align*}
		\begin{split}
			\langle\partial_t w_M,\zeta\rangle
				\leq{}&\|\widehat K_M(w_M)\|_{L^{\frac{6q+6}{2q_0+1}}}\|\Delta\zeta\|_{L^s}
				 +\|\Theta_M(w_M)\|_{L^\frac{6q+6}{q+2q_0+2}}\|\partial_t \chi_M\|_{L^2}\|\zeta\|_{L^{s^{**}}}\\
				&+\|\rho(\chi_M)\|_{L^\infty}\|\Theta_M(w_M)\|_{L^\frac{6q+6}{q+2q_0+2}}\|\di\bl \partial_t u_M\br\|_{L^2}\|\zeta\|_{L^{s^{**}}}\\
				&+\|\rho'(\chi_M)\|_{L^\infty}\|\Theta_M(w_M)\|_{L^\frac{6q+6}{2q_0+1}}\|\di(u_M)\|_{L^6}\|\partial_t\chi_M\|_{L^2}\|\zeta\|_{L^{s^{**}}}.
		\end{split}
		\end{align*}
		
		By using the boundedness of $\chi_M$ in $L^\infty(0,T;L^\infty(\Omega))$
		and $\frac{6q+6}{q+2q_0+2}\leq \frac{6q+6}{2q_0+1}$, we obtain
		\begin{align*}
		\begin{split}
			\|\partial_t w_M\|_{(W_\nu^{2,s})^*}
			\leq{}&C\|\Theta_M(w_M)\|_{L^\frac{6q+6}{2q_0+1}}\Big(
					\|\partial_t \chi_M\|_{L^2}
					+\|\di\bl \partial_t u_M\br\|_{L^2}\\
					&+\|\di(u_M)\|_{L^6}\|\partial_t\chi_M\|_{L^2}\Big)
				+C\|\widehat K_M(w_M)\|_{L^{\frac{6q+6}{2q_0+1}}}.
		\end{split}
		\end{align*}
%		Here, we have used the calculations (using (A2) and (A3))
%		$$
%			s^{**}\geq\frac{6\sigma}{2\sigma-1}\text{ and }\frac12+\frac16+\frac{1}{6\sigma}+\frac{1}{\frac{6\sigma}{2\sigma-1}}=1.
%		$$

%		Note that, for $s>3$, we obtain the continuous embedding $L^1(\Omega)\hookrightarrow W^{2,s}(\Omega)^*$.
		Calculating the $L^r$-norm in time and using H\"older's inequality show
		\begin{align*}
		\begin{split}
			&\|\partial_t w_M\|_{L^r\big((W_\nu^{2,s})^*\big)}\\
			&\qquad\leq
				C\|\Theta_M(w_M)\|_{L^\frac{2r}{2-r}\big(L^\frac{6q+6}{2q_0+1}\big)}\Big(\|\partial_t \chi_M\|_{L^2(L^2)}
				+\|\di\bl \partial_t u_M\br\|_{L^2(L^2)}\\
			&\qquad\quad+\|\di(u_M)\|_{L^\infty(L^6)}\|\partial_t\chi_M\|_{L^2(L^2)}\Big)
			+C\|\widehat K_M(w_M)\|_{L^r\big(L^{\frac{6q+6}{2q_0+1}}\big)}.
		\end{split}
		\end{align*}
		Keeping the first and the third a~priori estimates in mind,
		it still remains to show
		\begin{subequations}
		\begin{align}
		\label{eqn:KMbound}
			&\{\widehat K_M(w_M)\}\text{ bounded in }L^r\big(0,T;L^{\frac{6q+6}{2q_0+1}}(\Omega)\big),\\
		\label{eqn:ThetaMbound}
			&\{\Theta_M(w_M)\}\text{ bounded in }L^\frac{2r}{2-r}\big(0,T;L^\frac{6q+6}{2q_0+1}(\Omega)\big).
		\end{align}
		\end{subequations}
		\begin{itemize}
			\item
				Estimate \eqref{eqn:KMestimate} leads to
				\begin{align}
				\label{eqn:KMTerm}
					\|\widehat K_M(w_M)\|_{L^r\big(L^{\frac{6q+6}{2q_0+1}}\big)}
					\leq C(\|\C T_M(w_M)\|_{L^{r(2q_0+1)}(L^{6q+6})}^{2q_0+1}+1)+\|w_M\|_{L^r\big(L^{\frac{6q+6}{2q_0+1}}\big)}.
				\end{align}
%		We adopt the notation
%		in \eqref{eqn:subSupLevelSets} and use the $(2q+1)$-growth of $\widehat K_M$ on $l_M^-(t)$
%		and the linear growth of $\widehat K_M$ on $l_M^+(t)$ (see above). More precisely,
%		\begin{align*}
%			\|\widehat K_M(w_M)\|_{L^r(L^{s'})}^r 
%				={}&\int_0^T\|C(w_M(t)^{2q+1}+1)\|_{L^{s'}(l_M^-(t))}^r\dt\\
%			&+\int_0^T\|C(M^{2q+1}+1)+w_M(t))\|_{L^{s'}(l_M^+(t))}^r\dt\\
%			\leq{}&\widetilde C\bl\int_0^T\|\C T_M( w_M(t))\|_{L^{s'(2q+1)}(l_M^-(t))}^{r(2q+1)}\dt+1\br\\
%			&+\widetilde C\bl\int_0^T\bl\|\C T_M( w_M(t))\|_{L^{s'(2q+1)}(l_M^+(t))}^{r(2q+1)}+\|w_M(t)\|_{L^{s'}(l_M^+(t))}^r\br\dt+1\br\\
%			\leq{}&\widetilde C\bl\|\C T_M(w_M)\|_{L^{r(2q+1)}(L^{s'(2q+1)})}^{2q+1}+\|w_M\|_{L^r(L^{s'})}^r+2\br.
%		\end{align*}
				Since, by definition, $r(2q_0+1)=2(q+1)$, we infer boundedness of
				$$
					\{\C T_M(w_M)\}\text{ in }L^{r(2q_0+1)}(0,T;L^{6q+6}(\Omega))
				$$
				by the fourth a~priori estimate
				and boundedness of
				$$
					\{w_M\}\text{ in }L^r\big(0,T;L^{\frac{6q+6}{2q_0+1}}(\Omega)\big)
				$$
				by the fifth a~priori estimate and by $r\in(1,2)$ and $\frac{6q+6}{2q_0+1}\leq 6$ using (A3).
				Finally, we obtain \eqref{eqn:KMbound}.
			\item
				By Assumption (A2), we obtain
				$$
					\|\Theta_M(w_M)\|_{L^{\frac{2r}{2-r}}\big(0,T;L^\frac{6q+6}{2q_0+1}(\Omega)\big)}
					\leq C(\|w_M\|_{L^{\frac{2r}{(2-r)\sigma}}\big(0,T;L^{\frac{6q+6}{(2q_0+1)\sigma}}(\Omega)\big)}^{1/\sigma}+1).
				$$
				Because of $\frac{6q+6}{(2q_0+1)\sigma}\leq 2$ (since $\sigma\geq 3$ by (A2) and $q\leq q_0$ by (A3)),
				we obtain \eqref{eqn:ThetaMbound} by the fifth a~priori estimate.
		\end{itemize}
		\ep
	\end{proof}
	
	\subsection{The passage $M\uparrow\infty$}
	The a~priori estimates from Lemma \ref{lemma:aPriori} give rise to the subsequent convergence properties
	for $\{u_M\}$, $\{w_M\}$ and $\{\chi_M\}$ along subsequences by Aubin-Lions type compactness results
	(cf. \cite{Simon})
	and by adapting Lemma \ref{lemma:strongZConv} to this case.
	\begin{corollary}
	\label{cor:convergenceLimit}
		There exist limit functions $(u,w,\chi)$ defined in spaces given in Definition \ref{def:weakSolution}
		such that the following convergence properties are satisfied for all $\mu\geq 1$, $s>3$
		and all $\varepsilon\in(0,1]$ (as $M\uparrow\infty$ for a subsequence):
		\begin{align*}
			\textit{(i) }&u_M\to u&&\textit{ weakly-star in }H^1(0,T;H^2(\Omega;\R^d))\cap W^{1,\infty}(0,T;H^1(\Omega;\R^d))\hspace*{4.3em}\\
				&&&\hspace*{6.6em}\cap H^{2}(0,T;L^2(\Omega;\R^d)),\\
				&u_M\to u&&\textit{ strongly in }H^1(0,T;H^{2-\varepsilon}(\Omega;\R^d)),\\
				&u_M\to u&&\textit{ a.e. in }\Omega_T,\\
			\textit{(ii) }&w_M\to w&&\textit{ weakly-star in }L^2(0,T;H^1(\Omega))\cap L^\infty(0,T;L^2(\Omega))\\
				&&&\hspace*{6.6em}\cap W^{1,r}(0,T;W_\nu^{2,s}(\Omega)^*),\\
				&w_M\to w&&\textit{ strongly in }L^2(0,T;H^{1-\varepsilon}(\Omega))\cap L^\mu(0,T;L^2(\Omega)),\\
				&w_M\to w&&\textit{ a.e. in }\Omega_T,\\
%				$\C T_M(w_M)\to w$ weakly in $L^{2(q+1)}(0,T;L^{6(q+1)}(\Omega))$,
			\textit{(iii) }&\chi_M\to\chi&&\textit{ weakly-star in }L^\infty(0,T;W^{1,p}(\Omega))\cap H^1(0,T;L^2(\Omega)),\\
				&\chi_M\to\chi&&\textit{ strongly in }L^\mu(0,T;W^{1,p}(\Omega)),\\
				&\chi_M\to\chi&&\textit{ uniformly on }\OL{\Omega_T}.
		\end{align*}
	\end{corollary}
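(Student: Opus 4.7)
The plan is to extract weakly-star convergent subsequences directly from the a~priori estimates in Lemma~\ref{lemma:aPriori} via the Banach-Alaoglu theorem (noting that all target spaces are duals of separable Banach spaces), and then to upgrade to strong convergence in spaces of lower regularity via Aubin-Lions type compactness results as stated in \cite{Simon}. The identification of limits across the three unknowns is done separately, and finally the gradient convergence of $\chi_M$ is sharpened by the monotonicity argument of Lemma~\ref{lemma:strongZConv}.

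For item~(i), the third a~priori estimate bounds $\{u_M\}$ in $H^1(0,T;H^2(\Omega;\R^d))\cap W^{1,\infty}(0,T;H^1(\Omega;\R^d))\cap H^2(0,T;L^2(\Omega;\R^d))$, so a subsequence converges weakly-star in this intersection. I would then combine the compact embedding $H^2(\Omega)\hookrightarrow H^{2-\e}(\Omega)$ with the uniform bound on $\partial_{tt}u_M$ in $L^2(L^2)$ to apply the Aubin-Lions lemma with pivot space $L^2$, obtaining strong convergence of $\partial_t u_M$ in $L^2(0,T;H^{2-\e})$ and of $u_M$ in $C(0,T;H^{2-\e})$; the strong $H^1(H^{2-\e})$-convergence follows. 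A further diagonal subsequence yields a.e.\ convergence on $\Omega_T$.

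For item~(ii), the fifth and sixth a~priori estimates provide boundedness in $L^2(H^1)\cap L^\infty(L^2)\cap W^{1,r}((W^{2,s}_\nu)^*)$. The chain of compact/continuous embeddings $H^1(\Omega)\hookrightarrow\hookrightarrow H^{1-\e}(\Omega)\hookrightarrow(W^{2,s}_\nu(\Omega))^*$ (the latter because $W^{2,s}_\nu(\Omega)\hookrightarrow H^{1-\e}(\Omega)$ by the Sobolev embedding, hence dualizing) together with Aubin-Lions gives strong convergence in $L^2(H^{1-\e})$. Extracting a further subsequence gives a.e.\ convergence on $\Omega_T$, and combining this with the uniform $L^\infty(L^2)$-bound and Lebesgue's dominated convergence theorem upgrades the strong convergence to $L^\mu(L^2)$ for any $\mu\geq 1$. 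Non-negativity of $w$ a.e.\ in $\Omega_T$ follows at once from the a.e.\ convergence and $w_M\geq 0$.

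For item~(iii), boundedness in $L^\infty(W^{1,p})\cap H^1(L^2)$ combined with the compact embedding $W^{1,p}(\Omega)\hookrightarrow\hookrightarrow C(\OL\Omega)$ (which requires $p>d$, cf.\ Assumption~(A8)) and the Arzel\`a-Ascoli/Aubin-Lions type theorem gives uniform convergence on $\OL{\Omega_T}$; the constraints $\chi\geq 0$ and $\partial_t\chi\leq 0$ transfer from $\chi_M$ to $\chi$ by the weak and uniform limit, respectively. The remaining assertion --- strong convergence $\chi_M\to\chi$ in $L^\mu(0,T;W^{1,p}(\Omega))$ --- is the main obstacle and requires adapting the argument of Lemma~\ref{lemma:strongZConv}: one constructs, via the approximation result from \cite[Lemma~5.2]{WIAS1520}, test functions $\zeta_M\in L^p(0,T;W^{1,p}_+(\Omega))$ with $0\leq\zeta_M(t)\leq\chi_M(t)$ and $\zeta_M\to\chi$ in $L^p(W^{1,p})$, inserts them into the one-sided variational inequality (derived in Section~\ref{section:tauToZero} for the truncated system) tested against $\chi_M-\zeta_M$, and exploits the uniform monotonicity estimate of the $p$-Laplacian
\begin{equation*}
\|\nabla\chi_M-\nabla\chi\|_{L^p(\Omega_T)}^p\leq C\int_{\Omega_T}\bl|\nabla\chi_M|^{p-2}\nabla\chi_M-|\nabla\chi|^{p-2}\nabla\chi\br\cdot\nabla(\chi_M-\chi)\dxt
\end{equation*}
to show $\limsup_{M\to\infty}$ of the right-hand side is $\leq 0$, using the already established weak convergence of $\chi_M$ and strong convergences of $u_M$ and $w_M$ to pass to the limit in the lower-order coupling terms. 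The boundedness of $\Theta_M(w_M)\di(u_M)$ etc.\ must be controlled uniformly via the fourth and fifth a~priori estimates; this is the delicate point since $\Theta_M\to\Theta$ only pointwise and not uniformly. Once $L^p(W^{1,p})$-convergence is obtained, interpolation with the $L^\infty(W^{1,p})$-bound gives the full $L^\mu(W^{1,p})$-statement for arbitrary $\mu\geq 1$.
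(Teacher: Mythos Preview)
Your proposal is correct and follows essentially the same route as the paper. The paper itself only sketches the argument by referring back to the comments after Corollary~\ref{cor:weakConvDiscr} and stating that Lemma~\ref{lemma:strongZConv} must be ``adapted to this case''; you have filled in precisely those details (Aubin--Lions for the strong convergences, Lebesgue's theorem for the $L^\mu(L^2)$-upgrade, and the monotonicity argument for the $p$-Laplacian combined with the approximation from \cite[Lemma~5.2]{WIAS1520} to recover strong $W^{1,p}$-convergence of $\chi_M$), so there is no discrepancy in method.
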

	We refer to the comments after Corollary \ref{cor:weakConvDiscr} to indicate how the strong convergence properties can be achieved.
	
	Corollary \ref{cor:convergenceLimit} can be used to prove convergence of $\widehat K_M(w_M)$,
	$\Theta_M(w_M)$ and $\xi_M$ as $M\uparrow\infty$ in suitable spaces.
	More precisely, we obtain the following result.
	\begin{corollary}
	\label{cor:convergenceLimit2}
		There exists an element $\xi\in L^2(0,T;L^{2}(\Omega))$ such that for all $1\leq \lambda<6$
		(as $M\uparrow\infty$ for a subsequence):
		\begin{align*}
			\textit{(i) }&
				\widehat K_M(w_M)\to \widehat K(w)&&\textit{ weakly in }L^{\frac{2q+2}{2q_0+1}}\big(0,T;L^{\frac{6q+6}{2q_0+1}}\big(\Omega)),\hspace*{11.3em}\\
			\textit{(ii) }&
				\Theta_M(w_M)\to \Theta(w)&&\textit{ strongly in }L^{2\sigma}(0,T;L^{\lambda\sigma}(\Omega)),\\
			\textit{(iii) }&
				\xi_M\to\xi&&\textit{ weakly in }L^2(0,T;L^{2}(\Omega)).
		\end{align*}
	\end{corollary}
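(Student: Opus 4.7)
For (i), the bound \eqref{eqn:KMbound} obtained inside the proof of the sixth a priori estimate already shows that $\{\widehat K_M(w_M)\}$ is uniformly bounded in $L^{(2q+2)/(2q_0+1)}(0,T;L^{(6q+6)/(2q_0+1)}(\Omega))$, so reflexivity gives a weakly convergent subsequence with some limit $\ell$. To identify $\ell=\widehat K(w)$ I will argue pointwise: by Corollary~\ref{cor:convergenceLimit}, $w_M\to w$ a.e.\ in $\Omega_T$, so for a.e.\ $(x,t)$ the value $w(x,t)$ is finite and $w_M(x,t)\leq M$ for all sufficiently large $M$; then \eqref{eqn:KMIdentity} forces $\widehat K_M(w_M(x,t))=\widehat K(w_M(x,t))\to\widehat K(w(x,t))$ by continuity of $\widehat K$. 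Uniqueness of weak limits finishes the identification.

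For (ii), the analogous pointwise reasoning applied to $\Theta_M(w_M)=\Theta(\C T_M(w_M))$, together with the continuity of $\Theta$, gives $\Theta_M(w_M)\to\Theta(w)$ a.e.\ in $\Omega_T$. To promote a.e.\ convergence to strong convergence in $L^{2\sigma}(L^{\lambda\sigma})$ I will exhibit a uniform bound in a Bochner space whose exponents strictly exceed the target ones in both variables. The growth condition (A2) gives the pointwise estimate $\Theta_M(w_M)^\sigma \leq C(\C T_M(w_M)+1)$, so the fourth a priori estimate (bounding $\C T_M(w_M)$ in $L^{2(q+1)}(L^{6(q+1)})$) yields $\{\Theta_M(w_M)\}$ uniformly bounded in $L^{2\sigma(q+1)}(L^{6\sigma(q+1)})$. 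Since (A3) forces $q>1/2$, one has $2\sigma(q+1)>2\sigma$ and $6\sigma(q+1)>6\sigma>\lambda\sigma$ for every $\lambda<6$, so a Vitali-type equi-integrability argument, combined if needed with H\"older interpolation in space and time on the bounded cylinder $\Omega_T$, produces the desired strong convergence.

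For (iii), I plan to show $\{\xi_M\}$ is uniformly bounded in $L^2(0,T;L^2(\Omega))$ by estimating each summand of \eqref{eqn:defXiM} separately: $\gamma(\chi_M)$ lies in $L^\infty(L^\infty)$ because $\chi_M\in[0,1]$ and $\gamma$ is continuous; the term $\tfrac12 b'(\chi_M)\CC\e(u_M){:}\e(u_M)$ lies in $L^\infty(L^3)\hookrightarrow L^2(L^2)$ thanks to the third a priori estimate and the embedding $H^2\hookrightarrow W^{1,6}$ (valid for $d\leq 3$); the bound on $\Theta_M(w_M)$ just obtained in (ii) handles the third contribution in $L^2(L^2)$; and $\rho'(\chi_M)\Theta_M(w_M)\di(u_M)$ is controlled via H\"older using $\di(u_M)\in L^\infty(L^6)$ and $\Theta_M(w_M)\in L^2(L^3)$ (which follows from the stronger bound used in (ii)). Weak compactness in $L^2(L^2)$ then supplies the limit $\xi$.

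The critical step is (ii): obtaining strong convergence with the sharp time exponent $2\sigma$ requires a uniform bound in a strictly larger Bochner space in \emph{both} exponents. The natural bound coming from (A2) combined only with the fifth a priori estimate ($w_M\in L^2(H^1)\cap L^\infty(L^2)$) would place $\Theta_M(w_M)$ in $L^{2\sigma}(L^{6\sigma})$, with spare integrability only in space. It is exactly the extra time integrability coming from the fourth a priori estimate (the Boccardo--Gallou\"et-type testing by $\C T_M(w_M)$) that closes the Vitali argument and yields the strong convergence announced in (ii).
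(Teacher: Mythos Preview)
Your argument for (i) is essentially the paper's: bound $\widehat K_M(w_M)$ in the stated Bochner space, extract a weak limit, and identify it via the a.e.\ convergence of $w_M$.

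For (ii) and (iii) your proofs are correct but take different routes from the paper.

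In (ii) the paper does \emph{not} invoke the fourth a~priori estimate. It uses directly the strong convergence $w_M\to w$ in $L^2(0,T;L^\lambda(\Omega))$ for every $\lambda<6$ (which follows from Corollary~\ref{cor:convergenceLimit}) together with the growth bound $\Theta(r)^\sigma\leq C(|r|+1)$ from (A2). Since $|\Theta_M(w_M)-\Theta(w)|^\sigma\leq C(|w_M|+|w|+1)$ and the right-hand side converges in $L^2(L^\lambda)$, generalized dominated convergence gives $|\Theta_M(w_M)-\Theta(w)|^\sigma\to 0$ in $L^2(L^\lambda)$, i.e.\ $\Theta_M(w_M)\to\Theta(w)$ in $L^{2\sigma}(L^{\lambda\sigma})$. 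So your final paragraph overstates matters: the Boccardo--Gallou\"et testing (fourth estimate) is not what ``closes'' (ii); the paper gets by with the strong convergence of $w_M$ coming from Aubin--Lions. Your Vitali route via the higher $L^{2\sigma(q+1)}(L^{6\sigma(q+1)})$ bound is a legitimate alternative, just less economical.

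In (iii) the paper exploits the product structure $\xi_M=-\mathbf 1_{\{\chi_M=0\}}\,\eta_M$: the indicator is bounded in $L^\infty(\Omega_T)$ and hence has a weak-$*$ cluster point $\pi$, while $\eta_M\to\eta$ strongly in $L^2(L^2)$, so the product converges weakly and one obtains the explicit form $\xi=-\pi\eta$. Your approach---bounding each summand in \eqref{eqn:defXiM} and extracting a weak $L^2(L^2)$ limit---is simpler and suffices for the statement of the corollary, but yields less information about $\xi$.
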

	\begin{proof}
%		For the remaining items, we can argue as follows.
		\begin{itemize}
		 \item[(i)]
				We obtain the estimate
				\begin{align*}
					&\|\widehat K_M(w_M)\|_{L^{\frac{2q+2}{2q_0+1}}\big(L^{\frac{6q+6}{2q_0+1}}\big)}\\
					&\qquad\leq C(\|\C T_M(w_M)\|_{L^{2q+2}(L^{6q+6})}^{2q_0+1}+1)+\|w_M\|_{L^{\frac{2q+2}{2q_0+1}}\big(L^{\frac{6q+6}{2q_0+1}}\big)}.
				\end{align*}
				due to \eqref{eqn:KMestimate}.
				The first summand on the right-hand side is bounded by the fourth a~priori estimate while
				the second one is bounded by the fifth a~priori estimate.
				
				This enables us to choose a subsequence (we omit the subindex) such that
				\begin{align}
				\label{eqn:KMWeakConv}
					\widehat K_M(w_M)\to\eta\text{ weakly in }L^{\frac{2q+2}{2q_0+1}}\big(0,T;L^{\frac{6q+6}{2q_0+1}}\big(\Omega))
				\end{align}
				for an element $\eta\in L^{\frac{2q+2}{2q_0+1}}\big(0,T;L^{\frac{6q+6}{2q_0+1}}\big(\Omega))$.
				
				Furthermore, noticing that $w_M\to w$ a.e. in $\Omega_T$ as $M\uparrow\infty$, we conclude
				\begin{align}
				\label{eqn:KMPointwiseConv}
					\widehat K_M(w_M)\to \widehat K(w)\text{ a.e. in }\Omega_T.
				\end{align}
				From \eqref{eqn:KMWeakConv} and \eqref{eqn:KMPointwiseConv} we conclude (i).
			\item[(ii)]
				This item follows from the fact that $w_M\to w$ converges strongly in $L^2(0,T;L^{\lambda}(\Omega))$ for all $1\leq \lambda<6$
				and from the growth condition for $\Theta$ in (A2).
			\item[(iii)]
				By referring to the construction of $\xi_M$ in \eqref{eqn:defXiM}, we choose a weakly-star cluster point for the sequence $\{\mathbf 1_{\{\chi_M=0\}}\}_{M\in\N}$
				(here $\mathbf 1_{\{\chi_M=0\}}:\Omega_T\to\{0,1\}$ denotes the characteristic function on the level set $\{\chi_M=0\}$),
				i.e.
				$$
					\pi_M:=\mathbf 1_{\{\chi_M=0\}}\to \pi\text{ weakly-star in }L^\infty(0,T;L^\infty(\Omega))
				$$
				as $M\uparrow\infty$ for a subsequence.
				By the already known convergence properties, we deduce that
				the sequence of functions
				$$
					\eta_M:=\Big(\gamma(\chi_M)+\frac{b'(\chi_M)}{2}|\e(u_M)|^2-\Theta_M(w_M)-\rho'(\chi_M)\Theta_M(w_M)\di(u_M)\Big)^+
				$$
				converges strongly to the corresponding limit function $\eta$ in $L^2(0,T;L^2(\Omega))$.
				This proves
				$$
					\xi_M=-\pi_M\eta_M\to -\pi\eta=:\xi\text{ weakly in }L^2(0,T;L^2(\Omega))
				$$
				as desired.
		\end{itemize}
		\ep
	\end{proof}
	\begin{remark}
			We remark that the weakly-star limit function $\pi$ is not necessarily of the form $\mathbf 1_{\{\chi=0\}}$
			and, therefore, $\xi$ need not to be of the form $-\eta\mathbf 1_{\{\chi=0\}}$
			(in contrast to the truncated system; see \eqref{eqn:defXiM}).
			What only matters is that the weak convergence properties of $\xi_M$ together with the properties for $(u_M,w_M,\chi_M)$ suffices to pass to the limit 
			in the variational inequalities \eqref{eqn:damageEq} and \eqref{eqn:damageEq2} in order to obtain \eqref{eqn:weak3} and \eqref{eqn:weak4} as indicated below.
	\end{remark}
	\begin{proof}[Proof of Theorem \ref{theorem:existence}]
	The limit passage of the truncated system given by \eqref{eqn:heatEq2}, \eqref{eqn:momentumEq},
	\eqref{eqn:damageEq}, \eqref{eqn:damageEq2} and \eqref{eqn:EI} as $M\uparrow\infty$
	can now be performed with Corollary \ref{cor:convergenceLimit} and Corollary \ref{cor:convergenceLimit2}.
	\begin{itemize}
		\item
			\textbf{Heat equation.}
			Integrating \eqref{eqn:heatEq2} in time and applying integration by parts show
			\begin{align*}
				&\int_0^T\langle\partial_t w_M,\Psi\rangle\dt\\
				&\qquad-\int_{\Omega_T}\bl \widehat K_M(w_M)\Delta\Psi
					-\bl\Theta_M(w_M)\partial_t \chi_M
					+\rho(\chi_M)\Theta_M(w_M)\di\bl \partial_t u_M\br\br\Psi\br\dxt\\
				&\qquad+\int_{\Omega_T}\rho'(\chi_M)\Theta_M(w_M)\di(u_M)\partial_t\chi_M\Psi\dxt=0,
			\end{align*}
			for all test-functions $\Psi\in C([0,T];W_\nu^{2,s}(\Omega))$.
			Taking \eqref{eqn:HoelderExp} into account, passing $M\uparrow\infty$ by
			employing the convergence results in Corollary \ref{cor:convergenceLimit} and Corollary \ref{cor:convergenceLimit2}
			and switching back to an a.e. in time formulation,
			we end up with \eqref{eqn:weak1}.
		\item
			\textbf{Balance of momentum equation} and \textbf{one-sided variational inequality.}
			Translating \eqref{eqn:momentumEq}, \eqref{eqn:damageEq} and \eqref{eqn:damageEq2} to a weak formulation involving
			test-functions in time and space, we can pass $M\uparrow\infty$.
			Translating the results back to an a.e. in time formulation, we obtain \eqref{eqn:weak2}, \eqref{eqn:weak3} and \eqref{eqn:weak4}.
		\item
			\textbf{Partial energy inequality.}
			The inequality \eqref{eqn:weak5} is gained from \eqref{eqn:EI} by using lower semi-continuity arguments
			in the transition $M\uparrow\infty$.\ep
	\end{itemize}
	\end{proof}
	
{\em Acknowledgement.} \,\,We thank Riccarda Rossi from the University of Brescia and the anonymous referees for their helpful
comments on the first version of this paper.

	\addcontentsline{toc}{chapter}{Bibliography}{\footnotesize{\setlength{\baselineskip}{0.2 \baselineskip}
	\bibliography{references}

\begin{thebibliography}{10}

\bibitem{bobo2}
E.~Bonetti and G.~Bonfanti.
\newblock {Well-posedness results for a model of damage in thermoviscoelastic
  materials}.
\newblock {\em Ann. Inst. H. Poincar\'e Anal. Non Lin\'eaire}, 25:1187--1208,
  2008.

\bibitem{BS04}
E.~Bonetti and G.~Schimperna.
\newblock Local existence for {F}r\'emond's model of damage in elastic
  materials.
\newblock {\em Contin. Mech. Thermodyn.}, 16(4):319--335, 2004.

\bibitem{BMR09}
G.~Bouchitte, A.~Mielke, and T.~Roub{\'i}{\v c}ek.
\newblock A complete-damage problem at small strains.
\newblock {\em ZAMP Z. Angew. Math. Phys.}, 60:205--236, 2009.

\bibitem{Cia88}
P.~Ciarlet.
\newblock {\em Mathematical elasticity. Vol. I. Three-dimensional elasticity},
  volume~20 of {\em Studies in Mathematics and its Applications}.
\newblock North-Holland Publishing Co., Amsterdam, 1988.

\bibitem{CV90}
P.~Colli and A.~Visintin.
\newblock On a class of doubly nonlinear evolution equations.
\newblock {\em Comm. Partial Differential Equations}, 15:737--756, 1990.

\bibitem{Fre02}
M.~Fr{\'e}mond.
\newblock {\em {Non-smooth thermomechanics.}}
\newblock {Berlin: Springer}, 2002.

\bibitem{Fr12}
M.~Fr{\'e}mond.
\newblock {\em Phase Change in Mechanics}.
\newblock Lecture Notes of the Unione Matematica Italiana. Springer, 2012.

\bibitem{FN96}
M.~Fr{\'e}mond and B.~Nedjar.
\newblock Damage, gradient of damage and principle of virtual power.
\newblock {\em Int. J. Solids Structures}, 33(8):1083--1103, 1996.

\bibitem{Germain73}
P.~Germain.
\newblock {\em Cours de m\'echanique des milieux continus, Tome I: Th\'eorie
  g\'en\'erale}.
\newblock Masson et Cie. \'Editeurs, Paris, 1973.

\bibitem{WIAS1520}
C.~Heinemann and C.~Kraus.
\newblock Existence of weak solutions for {C}ahn-{H}illiard systems coupled
  with elasticity and damage.
\newblock {\em Adv. Math. Sci. Appl.}, 21(2):321--359, 2011.

\bibitem{WIAS1569}
C.~Heinemann and C.~Kraus.
\newblock Existence results for diffuse interface models describing phase
  separation and damage.
\newblock {\em Eur. J. Appl. Math.}, 24(2):179--211, 2013.

\bibitem{WIAS1722}
C.~Heinemann and C.~Kraus.
\newblock Complete damage in linear elastic materials --- modeling, weak
  formulation and existence results.
\newblock {\em Calc. Var. Partial Differ. Equ. (to appear)}, 2014.

\bibitem{WIAS1759}
C.~Heinemann and C.~Kraus.
\newblock A degenerating {C}ahn-{H}illiard system coupled with complete damage
  processes.
\newblock {\em Nonlinear Analysis: Real World Applications (to appear)}, 2014.

\bibitem{LD05}
J.~Lemaitre and R.~Desmorat.
\newblock {\em {E}ngineering {D}amage {M}echanics: {D}uctile, {C}reep,
  {F}atigue and {B}rittle {F}ailures}.
\newblock Springer-Verlag, Berlin, 2005.

\bibitem{MH94}
J.E. Marsden and T.J.R. Hughes.
\newblock {\em Mathematical Foundations of Elasticity}.
\newblock Dover Civil and Mechanical Engineering Series. Dover, 1994.

\bibitem{Mielke06}
A.~Mielke and T.~Roub{\'i}{\v c}ek.
\newblock Rate-independent damage processes in nonlinear elasticity.
\newblock {\em Mathematical Models and Methods in Applied Sciences},
  16:177--209, 2006.

\bibitem{Necas67}
J.~Ne{\v{c}}as.
\newblock {\em Les m{\'e}thodes directes en th{\'e}orie des {\'e}quations
  elliptiques}.
\newblock Academia, 1967.

\bibitem{RR08}
E.~Rocca and R.~Rossi.
\newblock {Analysis of a nonlinear degenerating PDE system for phase
  transitions in thermoviscoelastic materials.}
\newblock {\em J. Differ. Equations}, 245(11):3327--3375, 2008.

\bibitem{RR12}
E.~Rocca and R.~Rossi.
\newblock A degenerating {PDE} system for phase transitions and damage.
\newblock {\em M3AS}, 24:1265--1341, 2014.

\bibitem{RR14}
E.~Rocca and R.~Rossi.
\newblock {``Entropic'' solutions to a thermodynamically consistent PDE system
  for phase transitions and damage}.
\newblock {\em Preprint arXiv:1403.2577}, 2014.

\bibitem{Roubicek10}
T.~Roub{\'i}{\v c}ek.
\newblock Thermodynamics of rate-independent processes in viscous solids at
  small strains.
\newblock {\em SIAM J. Math. Anal.}, 42 No. 1, 2010.

\bibitem{Rou13}
T.~Roub{\'i}{\v c}ek.
\newblock {\em Nonlinear Partial Differential Equations with Applications}.
\newblock International Series of Numerical Mathematics. Birkh{\"a}user-Verlag,
  second edition, 2013.

\bibitem{Simon}
J.~Simon.
\newblock Compact sets in the space ${L}^p(0,{T};{B})$.
\newblock {\em Annali di Matematica Pura ed Applicata}, 146:65--96, 1986.

\bibitem{ZR66}
J.B. Zel\'dovich and Y.P. Raizer.
\newblock {\em Physics of shock waves and high-temperature hydrodynamic
  phenomena}.
\newblock Academic Press, New York, 1966.

\end{thebibliography}
	}
	\bibliographystyle{plain}}
\end{document}